\newtheorem{theorem}{Theorem}[section]
\newtheorem{lemma}[theorem]{Lemma}
\newtheorem{proposition}[theorem]{Proposition}
\theoremstyle{definition}
\newtheorem{remark}[theorem]{Remark}
\begin{document}



\title{
Time geodesics on a slippery cross slope \\ under gravitational wind 
}

\author{Nicoleta Aldea$^1$}
\author{Piotr Kopacz$^{2}$}



\affil{\small{$^1$Transilvania University of Bra\c{s}ov, Faculty of Mathematics and Computer Science\\ Iuliu Maniu 50, Bra\c{s}ov, Romania}}
\affil{$^2$Gdynia Maritime University, Faculty of Navigation \\  Al. Jana Paw{\l}a II 3, 81-345 Gdynia, Poland}

\date{}
\date{{\normalsize {\small {e-mail:} \texttt{codruta.aldea@unitbv.ro, p.kopacz@wn.umg.edu.pl}}}}
\maketitle

\begin{abstract} 

In this work, we pose and solve the time-optimal navigation problem considered on a slippery~mountain slope modeled by a Riemannian manifold of an arbitrary dimension, under the action of a cross gravitational wind. The impact of both lateral and longitudinal components of gravitational wind on the time geodesics is discussed. The varying along-gravity effect depends on traction in the presented model, whereas the 
 cross-gravity additive is 
 taken entirely 
in the equations of motion, for any direction 
 and gravity force. 
 We obtain the conditions for strong convexity and the purely geometric solution to the problem is given by a new 
 Finsler metric, which belongs to the type of general $(\alpha, \beta)$-metrics. 
 The proposed model 
 enables us to create 
 a direct link between the Zermelo navigation problem and the slope-of-a-mountain problem under the action 
 of a cross gravitational wind. 
Moreover, the behavior 
 of the Finslerian indicatrices 
 and time-minimizing trajectories in relation to the traction coefficient 
and 
 gravitational wind force are explained and illustrated 
 by a few examples in dimension two. This also 
 compares the corresponding solutions on the slippery slopes under various cross- and along-gravity effects, including the classical Matsumoto's slope-of-a-mountain problem and Zermelo's navigation.

\end{abstract}


\bigskip \noindent \textbf{MSC 2020}: 53B40, 53C60.

\smallskip \noindent \textbf{Keywords:} Time geodesic, Matsumoto's slope-of-a-mountain problem,  Zermelo's navigation problem, Slope metric, Gravitational wind, Riemann-Finsler manifold, Gradient vector field.



\section{Introduction}

\label{Intro}

We begin by recalling 
and briefly describing  the types of time-optimal navigation problems, in particular, in the presence of a gravitational wind, which have generally been investigated with a purely geometric approach in the literature on Riemann-Finsler geometry. We hope that such concise presentation of the preceding outcomes 
  will lead the reader to the main concept of the current study in a clear way, since our work is a natural complementation and generalization of some previous results. Although the notion of gravitational wind in the context of the Zermelo navigation data (\cite{BRS}) was  introduced very recently (\cite{slippery}), this lets us collect and describe all time-optimal problems mentioned below, including the classical ones, in a convenient, unified and effective manner. Roughly speaking, it also stands for the leitmotiv of the below summary and the new investigations presented further in this paper. The key aspect we want to emphasize 
 is the \textit{type 
 and range of compensation of the gravity effects} in the described models of the mountain slopes, which then characterize 
 the general equations of motion and, consequently, the related Finsler metric in each case. Such approach to the subject enables us to put and review the preceding results in a more general perspective,  based on the theory being developed (\cite{slippery,cross}). 

\subsection{
The time-optimal navigation problems 
 on a mountain slope under various gravity effects 
}

In order to gain some intuition, we consider the 2-dimensional models of the slopes including the  inclined planes in what follows, while the general purely geometric solutions to the time-optimal navigation problems described are valid for an arbitrary dimension. We first refer to 
 the classical problems investigated initially in the works of Ernst Zermelo and Makoto Matsumoto \cite{Zer0,Zer,matsumoto}. 


\paragraph{\textit{Zermelo's navigation}}One of the most iconic examples 
 in optimal control, as well as Riemann-Finsler geometry, is Zermelo's navigation problem (ZNP for short). This is about finding time-minimizing paths of a craft which moves at a maximum speed with respect to a surrounding and flowing medium between two positions at sea, on the river or in the air in the presence of  variable water stream or wind, modeled as a perturbing vector field $W$. The problem was initially formulated in the Euclidean spaces of low dimensions and solved with the use of variational calculus by Zermelo in 1930's \cite{Zer0, Zer}. Further on,  ZNP was investigated with application of Pontryagin’s maximum principle in optimal control. More recently, the problem was recalled with purely geometric formulation and generalized to Riemannian manifolds $(M, h)$ of an arbitrary dimension in Finsler geometry and spacetime \cite{SH, chern_shen, BRS, JS,cr
}. The key geometric property is that the Finslerian length of a piecewise $C^\infty$-curve on a manifold $M$ can 
 be interpreted as the time measure. In particular, if the acting vector field is weak and space-dependent, then the solution is given by a Randers metric, which has various applications in physics and mathematical modelling; see, for example, \cite{markvorsen,brody3}. Next, the Zermelo navigation was referred, extended and generalized in the applied and theoretical studies by many authors, who considered, among others, stronger winds, time dependence or variable speed of a navigating craft \cite{Y-Sabau,JS,kopi6,Yajima}. 

For our purposes herein, note that the entire wind vector is always taken into consideration in the equations of motion (equivalently, in the related 
 Finsler metric) in the Zermelo problem. Furthermore, remark that a gradient vector field can be treated as a particular wind in the sense of the navigation data $(h, W)$ 
(\cite{Nicprw}). This  relates 
 to the notion of a \textit{gravitational wind} being a component $\mathbf{G}^{T}$ of a gravitational field $\mathbf{G}=\mathbf{G}^{T}+\mathbf{G}^{\perp}$, where 
$\mathbf{G}^{T}$ 
 is tangent to $M$ and acts in the steepest downhill direction (considering a 2-dimensional model of the slope), and $\mathbf{G}^{\perp }$ is normal 
 to $M$. 
 In such case, with the wind data $W:=\mathbf{G}^{T}$ the general equations of motion  read 
\begin{equation*}
v_{\textit{\tiny{ZNP}}}=u+\mathbf{G}^{T}, 
\end{equation*}
where $u$ denotes a self-velocity (a control vector) and $||u||_h=1$ represents the maximum self-speed of a sailing or flying craft. The resultant indicatrix 
 is given by an $h$-circle (an ellipse) rigidly translated by $W$. 
In general, $\mathbf{G}^{T}$ depends on the position-dependent gradient vector field related to 
 a slope $M$ and a given acceleration of gravity, where $||\mathbf{G}^{T}||_{h}=\sqrt{h(\mathbf{G}^{T},\mathbf{G}^{T})}$. 


\paragraph{\textit{Matsumoto's slope of a mountain
}}Another well-known problem in Finsler geometry, which is also related to minimization of time, was studied first by Matsumoto in \cite{matsumoto} (MAT for short). The author investigated time geodesics on a slope of a mountain under the effect of gravity, taken into consideration that walking uphill is more tiring than walking downhill. It was assumed in his model that the cross-gravity additive, i.e. $\textnormal{Proj}_{u^{\perp }}\mathbf{G}^{T}$ is always canceled and thus, it has not any  impact on the resultant path\footnote{This issue was justified by Matsumoto in a word, saying that \textit{the component perpendicular to the velocity $u$ is regarded to be canceled by planting the walker's legs on the  road determined by $u$} \cite[p. 19]  {matsumoto}}, where $u^{\perp}$ denotes the orthogonal 
 direction to $u$.  
At the same time, the along-gravity effect  is considered to be maximum, i.e.  $\textnormal{Proj}_{u}\mathbf{G}^{T}$, for each direction of motion and wind force $||\mathbf{G}^{T}||_{h}$, and only the longitudinal (w.r.t. direction of the walker's self-velocity $u$) component of the gravitational wind is taken into account in the equations of motion\footnote{For brevity, we shall write $\mathbf{G}_{MAT}$ for $\textnormal{Proj}_{u}\mathbf{G}^{T}$, and $\mathbf{G}_{MAT}^{\perp}$ for $\textnormal{Proj}_{u^{\perp }}\mathbf{G}^{T}$ in the attached figures and text on further reading.}. The solution is represented here by a Matsumoto metric (a.k.a. a slope metric), which has also been used next, for example, to describe a range of the wildfire spreading mechanism \cite{markvorsen,JPS}. The resultant velocity in this case is defined as follows 
\begin{equation*}
v_{\textit{\tiny{MAT}}}=u+\text{Proj}_{u}\mathbf{G}^{T},
\end{equation*}
and it is evident that $\mathbf{G}^{T}=\textnormal{Proj}_{u}\mathbf{G}^{T} + \textnormal{Proj}_{u^{\perp }}\mathbf{G}^{T}$. 
In particular, this implies that the velocities $u$ and $v$ are always collinear, which contrasts to all other navigation problems described in this section. The related 
  indicatrix is given by a lima\c{c}on in a two-dimensional model of the slope. 

For comparison, it is worth of noting that the gravitational force $||\mathbf{G}^{T}||_{h}$ on the slope 
 always pushes a walker in one fixed direction, i.e. the steepest descent, whereas the perturbing wind in the Zermelo navigation in general blows 
 in an arbitrary direction on 
 a surface (more general, a Riemannian manifold). 
Mention that the solutions to both problems, i.e. the Randers 
 (or Kropina) 
 and Matsumoto 
 metrics belong to a type of $(\alpha ,\beta )$-metrics. 
For clarity, both ZNP and MAT are distinguished in the slope model in \cref{fig_slippery_slope}. 


\paragraph{\textit{Slippery slope}} Recently, a direct link between the Matsumoto slope-of-a-mountain problem and the Zermelo navigation problem  in a gravitational wind  was  shown in \cite{slippery}. Both these problems were generalized and studied in a \textit{slippery slope} model, including a traction coefficient\footnote{On further reading, this will be called a \textit{cross-traction coefficient $\eta$}  to distinguish it from an \textit{along-traction coefficient $\tilde{\eta}$} introduced in this paper.}  introduced therein, expressed by a real parameter $\eta\in[0, 1]$. In this setting the longitudinal component of gravitational wind always acts with full power, for each direction of motion and wind force $||\mathbf{G}^{T}||_{h}$, whereas the lateral one is subject to compensation due to traction. The solution is represented by a slippery slope metric, which belongs to a larger class of general $(\alpha, \beta )$-metrics, and the 
 related general equation of motion reads
\begin{equation}
v_\eta=u+(1-\eta)\text{Proj}_{u^{\perp }}\mathbf{G}^{T}+\text{Proj}_{u}\mathbf{G}^{T}. 
\label{eqs_slippery}
\end{equation}

Moreover, MAT and ZNP 
 become the edge and particular cases of the study presented, 
 what follows from the last equation. Namely, if 
 traction on the slope is minimum (zeroed), then the slippery slope problem leads to Zermelo's navigation, where the gravitational drift off the desired route is strongest. We can say that such controlled motion including sliding like on a perfect ice resembles free sailing or flying in the presence of water or air currents in the spirit of Zermelo model. On the other hand, if 
 traction is maximum ($\eta=1$), and consequently, the lateral (transverse) component of the gravitational wind is compensated entirely, then there does not occur any drag to the side caused by gravity, so $v_1$ and $u$ are collinear\footnote{$v_1\equiv v_{MAT}$, i.e. $v_\eta$ with $\eta=1$  in the slippery slope problem.}. This yields classical Matsumoto's problem described above. The graphical illustration of the situation on a slippery slope is presented in \cref{fig_slippery_slope}. 

\begin{figure}[h!]
\centering
~\includegraphics[width=0.68\textwidth]{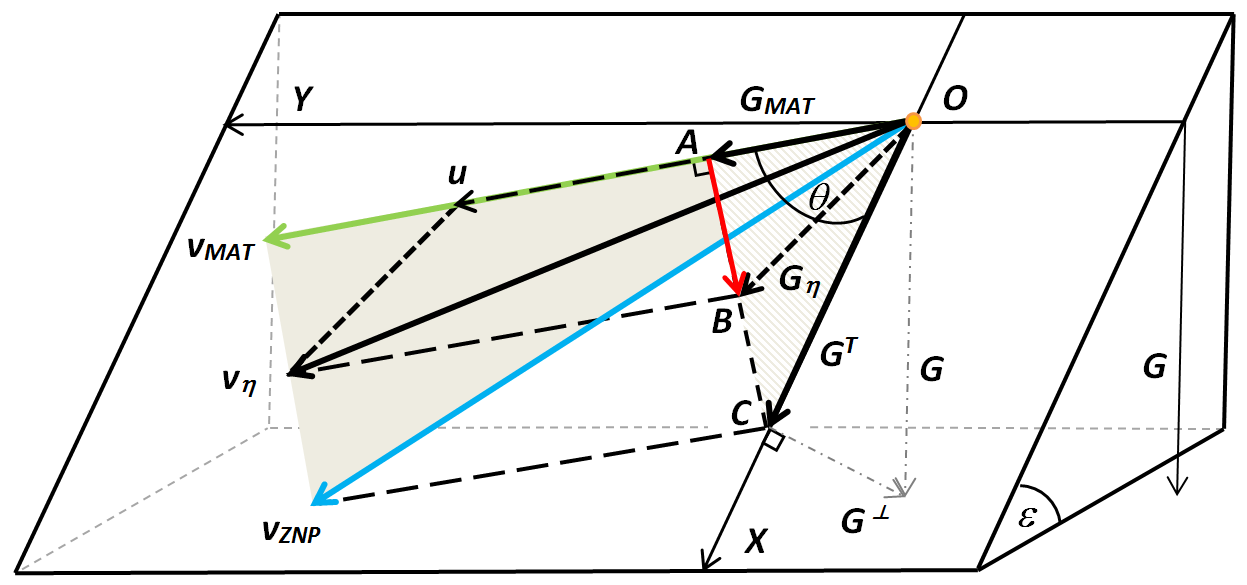} 
~\includegraphics[width=0.29\textwidth]{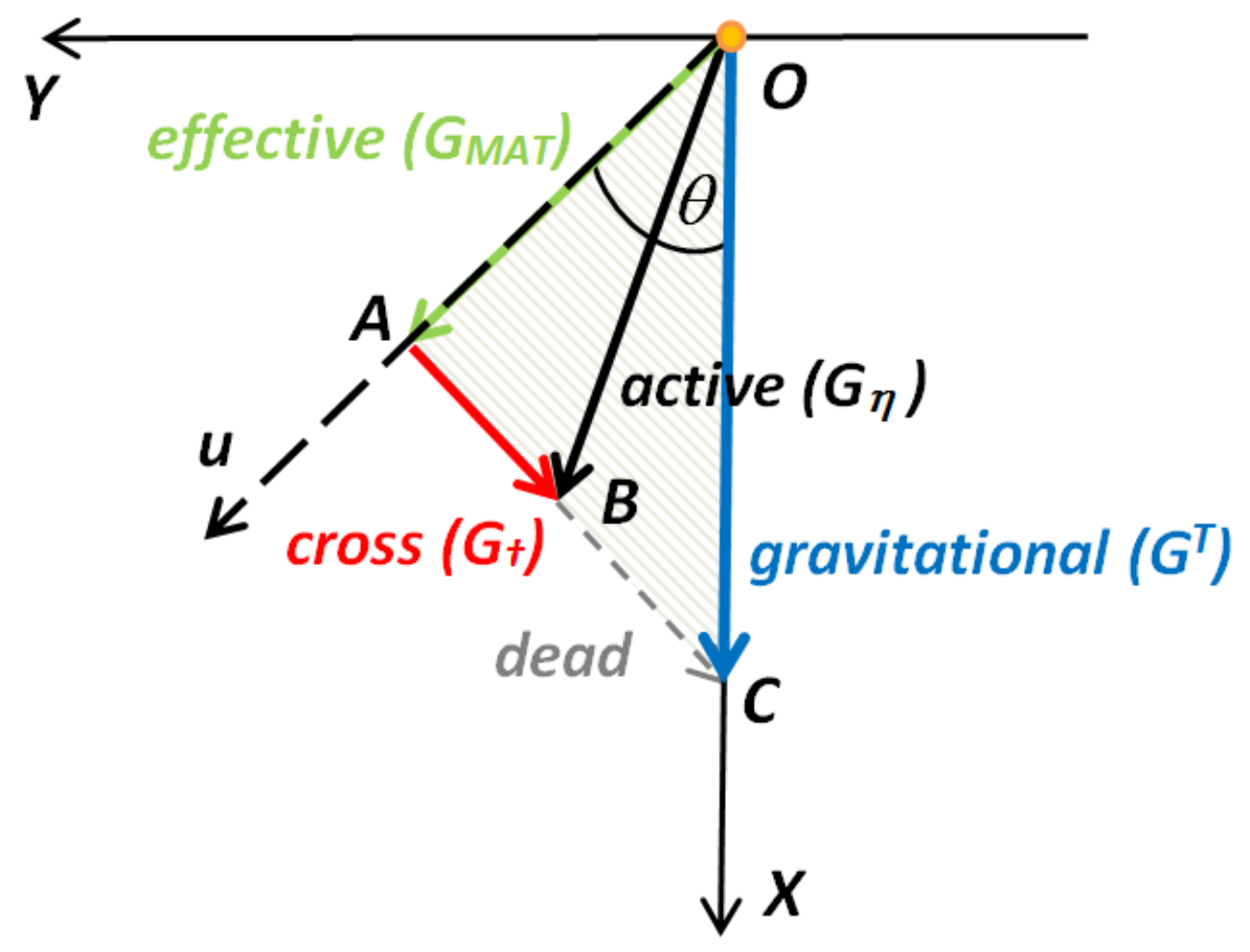}
\caption{Left: A model of a planar slippery slope $M$ being an inclined plane of the slope
angle $\protect\varepsilon$ in $\mathbb{R}^{3}$, under the gravity field $\mathbf{G}=\mathbf{G}%
^{T}+\mathbf{G}^{\perp}$, which is perpendicular to the horizontal plane (the base of the
slope). The gravitational wind $\mathbf{G}^{T}$ ``blows'' tangentially to the slope in the steepest downhill direction $X$ and $\mathbf{G}^{\perp}$ is the component of gravity normal to the slope $M$; $OX\perp OY$, $O\in M$. The resultant velocity is represented by $v_{\eta}$ including the boundary cases for $\eta=0, 1$, i.e. the Zermelo case ($v_{ZNP}$, blue) and the Matsumoto case ($v_{MAT}$, green), respectively. The lateral component $\protect\overrightarrow{AB}$ (a cross wind; red) of the active wind  $\mathbf{G}_{\eta}=\protect\overrightarrow{OB}$ depends in particular on the traction coefficient $\eta$. 
Right: A gravitational wind and its decompositions on the slippery slope $M$, where $OA\perp AC$. 
The gravitational wind $\mathbf{G}^{T}$ is a vector sum of an active wind and a dead wind ($\protect\overrightarrow{BC}$). The active wind is decomposed into two orthogonal components: an effective wind (which coincides with  $\mathbf{G}_{MAT}$ in this case) and a cross wind $\mathbf{G}_{\dag}$. The cross-gravity
effect in general varies on the slippery slope, i.e. $||\mathbf{G}_{\dag}||_h\in[0, ||\textnormal{Proj}_{u^{\perp}}\mathbf{G}^{T}||_h]$, while the along-gravity effect  is always full, i.e. equal to $||\mathbf{G}_{MAT}||_h$.    
The  direction $\theta$ of ``unperturbed'' motion indicated by the Riemannian velocity $u$ (the control vector, dashed black) is measured clockwise from $OX$, where $||u||_h=1$. 
}
\label{fig_slippery_slope}
\end{figure}

For convenience and clarity, we 
  now recall briefly the relevant material 
 from \cite{slippery}, which will also be used on further reading, thus making our exposition self-contained.    
The part of gravitational wind that is cancelled due to nonzero traction is named a \textit{dead wind}, and this has actually 
 no influence on the optimal trajectories (time geodesics). In turn, the remaining part of gravitational wind, which is included in the equations of motion on the slippery slope, we call an \textit{active wind} (denoted by $\mathbf{G}_{{\eta}}$ in the slippery slope model). We can rewrite \eqref{eqs_slippery} as $v_\eta=u+\mathbf{G}_{{\eta}}$, \linebreak 
 where $\mathbf{G}_{{\eta}}=(1-\eta)\text{Proj}_{u^{\perp }}\mathbf{G}^{T}+\text{Proj}_{u}\mathbf{G}^{T}$. 

 Clearly, a vector sum of the active and dead winds in each type of navigation problem under consideration 
 yields the entire gravitational wind $\mathbf{G}^{T}$. However, both these components in general are not orthogonal to each other. 
Furthermore, the active wind can be decomposed into two orthogonal components, 
  namely, an \textit{effective wind} and a \textit{cross wind}, being its projections upon the velocity $u$ and the orthogonal 
 direction to $u$, 
 respectively. For instance, the maximum effective wind together with the minimum (zeroed) cross wind, for each direction of motion and gravity force $||\mathbf{G}^{T}||_{h}$, determines the classical Matsumoto's  slope-of-a-mountain problem. On the other hand, the reversed setting yields the cross slope problem (\cite{cross}) recalled  right below. For clarity's sake, we also mention that a vector sum of the strongest effective wind and strongest cross wind gives full gravitational wind, so $\mathbf{G}_{{\eta}}=\mathbf{G}^{T}$, as is in ZNP.
 

\paragraph{\textit{Cross slope}}A different approach to time-optimal navigation on a mountain slope has been presented in \cite{cross} very recently (CROSS for short). Namely, unlike Matsumoto 
 and for comparison to  his standard set-up, the transverse component of gravitational wind was taken into account entirely in the equations of motion, whereas the along-gravity effect was reduced completely (see \cref{fig_slippery_xslope}, where CROSS is presented as a particular and edge case in further  investigation). 
Observe that, in general, the impact of the lateral component on the resultant velocity, and  on time geodesics, can be stronger 
  than 
 the longitudinal one, depending on the direction of motion 
on the hillside. Thus, it is reasonable to consider a reversed scenario, including the full cross-gravity effect and vanishing the entire along-gravity additive, which stands for the dead wind in this case.  
Recall from \cite{cross} that the slope model under the action of only cross-gravity impact has been called a \textit{cross slope}. The solution is given by another Finsler metric of general  $(\alpha ,\beta )$-type, i.e. a cross-slope metric. 
 The resultant velocity reads here
\begin{equation*}
v_\dag=u+\text{Proj}_{u^{\perp }}\mathbf{G}^{T}.
\end{equation*}
   The corresponding indicatrix is given by a lima\c{c}on, however different from that of the Matsumoto problem. In this case the active and dead winds coincide with the cross and effective winds, respectively. Such setting can be applied, for example, to the description of indicatrices being the pedal (algebraic) curves and surfaces \cite{Sabau_pedal}. In nature there is some analogy to the behavior of animals that move sideways, while being influenced by a natural force field, e.g. a sidewinder rattlesnake under gravity, or a hummingbird in wind. 
Furthermore, this kind of movement is also related to the linear transverse ship's sliding motion side-to-side (as known as sway) on a dynamic surface of a sea, 
while the linear front-back motion (as known as surge) is stabilized. 


\subsection{Model of a slippery cross slope under gravitational wind}

The very recent results \cite{cross, slippery} have been encouraging enough to merit further investigation. Continuing the above line of research naturally led us to the new model of a slippery cross slope presented below. 

\paragraph{\textit{Slippery cross slope}}Let us observe that actually each of two orthogonal components of gravitational wind, i.e. $\textnormal{Proj}_{u}\mathbf{G}^{T},  \textnormal{Proj}_{u^{\perp }}\mathbf{G}^{T}$ can be reduced partially due to traction, making use of a real parameter, and not only entirely like in \cite{matsumoto} (the lateral one) or \cite{cross} (the longitudinal one). As mentioned above, this has already been done in the case of the transverse component in a slippery slope model, where the cross-traction coefficient $\eta$ runs through the interval $[0, 1]$, linking MAT and ZNP \cite{slippery}. 
By analogy to such 
 compensation of $\mathbf{G}^{T}$,  we aim at considering a slippery slope model in the current study, however concerning the along-gravity scaling and introducing another parameter called an \textit{along-traction coefficient} $\tilde{\eta}\in[0, 1]$. 
We assume that, while the Earth's gravity impacts a walker or a craft on the slope, the 
cross wind being perpendicular to a desired direction of motion $u$ is regarded to act always entirely, whereas the effective wind,  
which pushes the craft downwards,  
can be compensated as depending on 
 traction.    
 In other words, the proposed model refers to a mountain slope, fixing the maximum cross-track additive continuously, for any direction of motion $\theta$ and gravity force $||\mathbf{G}^{T}||_{h}$ and admitting the along-track changes at the same time (longitudinal sliding). 
 Consequently, the corresponding Finslerian indicatrix in the new setting will be based on the direction-dependent deformation of the background Riemannian metric $h$ again. However, unlike all the preceding  problems listed above, the equations of motion in the general form will now be 
\begin{equation}
v_{\tilde{\eta}}=u+\text{Proj}_{u^{\perp}}\mathbf{G}^{T}+(1-\tilde{\eta})\text{Proj}_{u}\mathbf{G}^{T}.  
\label{eqs_motion}
\end{equation}
 Thus, we can say that the influences of both components of gravitational wind are now somewhat reversed in comparison to the slippery slope investigated in \cite{slippery}; cf. Eq. \eqref{eqs_slippery}.  To simplify the writing and to be in agreement with our previous notation, we will write $\mathbf{G}_{MAT}$ for $\textnormal{Proj}_{u}\mathbf{G}^{T}$, and $\mathbf{G}_{MAT}^{\perp}$ for $\textnormal{Proj}_{u^{\perp }}\mathbf{G}^{T}$.  
The new model of the mountain slope we call a \textit{slippery cross slope} and the related time-minimal navigation  generalizes or complements previous investigations.

\begin{figure}[h!]
\centering
\includegraphics[width=0.70\textwidth]{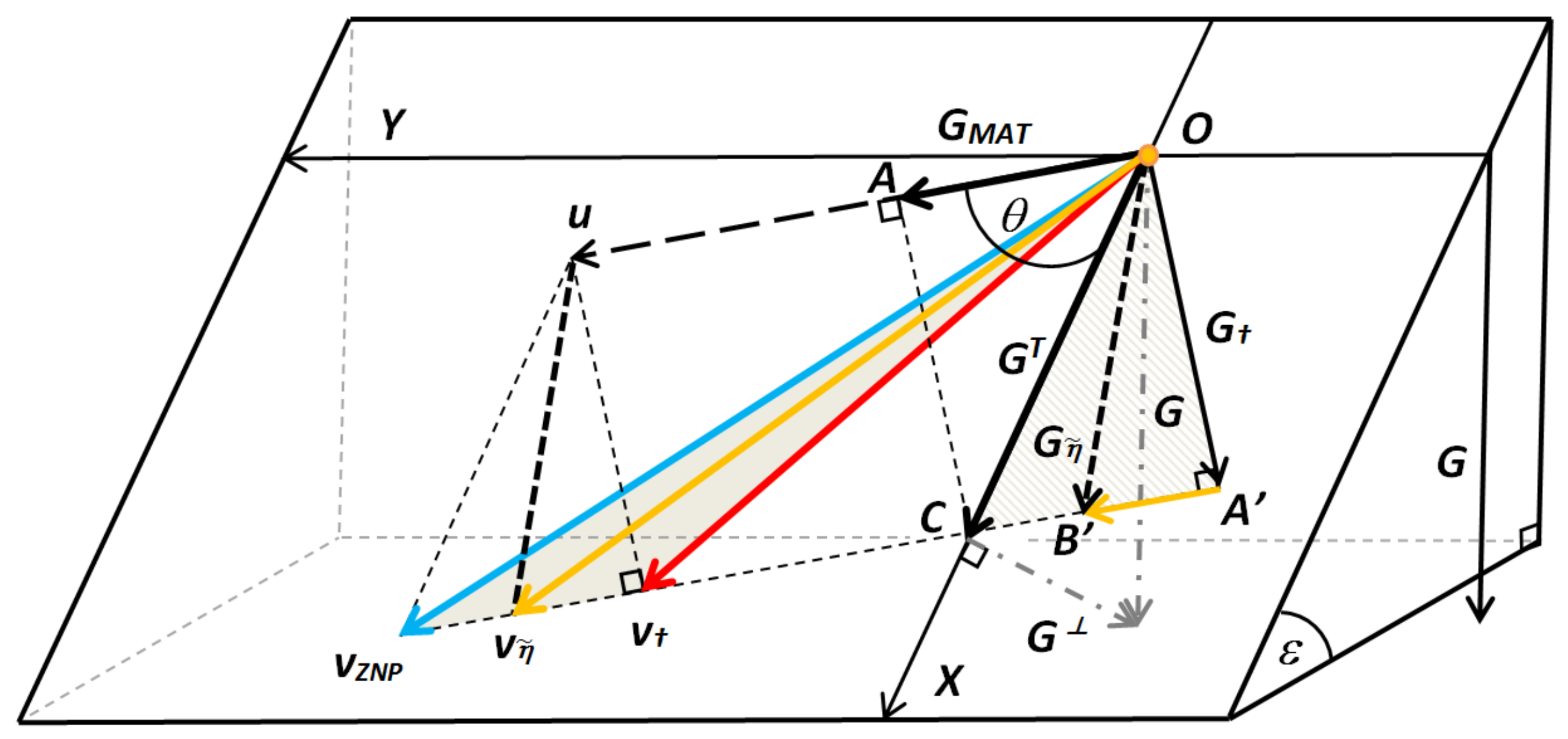}
\includegraphics[width=0.29\textwidth]{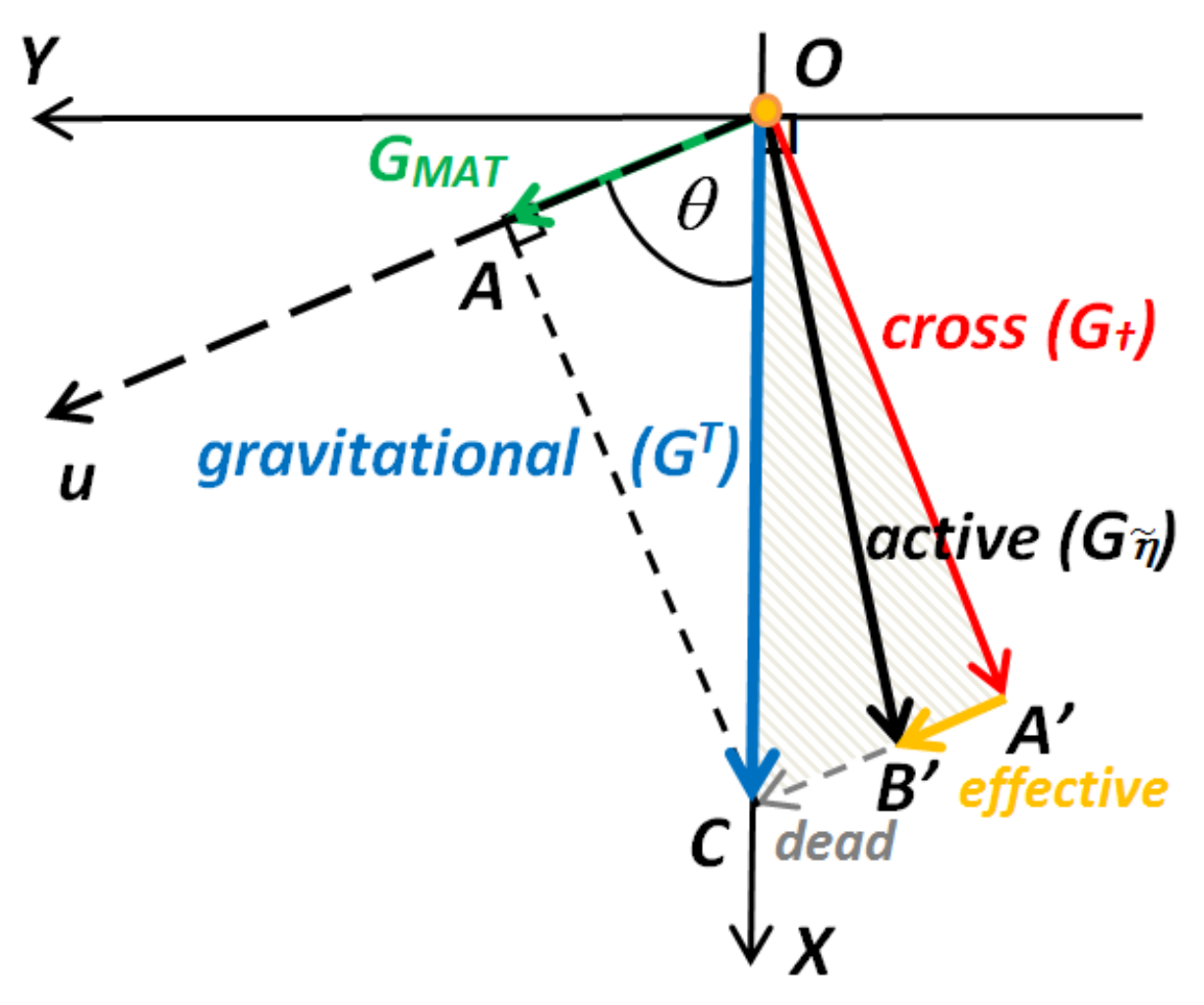}
\caption{Left: A model of a planar slippery cross slope $M$ under gravity $\mathbf{G}=\mathbf{G}^{T}+\mathbf{G}^{\perp}$, being analogous to the slippery slope as in \cref{fig_slippery_slope}. 
Now the resultant velocity is represented by $v_{\tilde{\eta}}$ (yellow, $\tilde{\eta}\in[0, 1]$) including the boundary cases for $\tilde{\eta}\in\{0, 1\}$, i.e. the Zermelo case ($v_{ZNP}$, blue) and the cross slope case ($v_{\dag}$, red), respectively. The longitudinal component $\protect\overrightarrow{A'B'}$  of the active wind  $\protect\overrightarrow{OB'}$ (denoted by  $\mathbf{G}_{\tilde{\eta}}$) w.r.t. $u$ depends in particular on the along-traction coefficient $\tilde{\eta}$. 
Right: A gravitational wind and its decompositions on the slippery cross slope $M$, where $OA\perp OA^{\prime }$. The gravitational wind $\mathbf{G}^{T}$ is a vector sum of an active wind and a dead wind ($\protect\overrightarrow{B'C}$). The former 
 is in turn decomposed into two orthogonal components: an effective wind $\protect\overrightarrow{A'B'}$ and a cross wind  $\protect\overrightarrow{OA^{\prime }}=\mathbf{G}_{\dag}$ (which coincides with $\textnormal{Proj}_{u^{\perp}}\mathbf{G}^{T}$ in this case). The along-gravity force 
 in general varies from 0 to $||\mathbf{G}_{MAT}||_h$ on the slippery cross slope, while the cross-gravity effect is always full, i.e. equal to $||\textnormal{Proj}_{u^{\perp}}\mathbf{G}^{T}||_h$.    
The  direction $\theta$ of ``unperturbed'' motion indicated by the Riemannian velocity $u$ (the control vector, dashed black) is measured clockwise from $OX$, where $||u||_h=1$. 
}
\label{fig_slippery_xslope}
\end{figure}

From Eq. \eqref{eqs_motion} it follows manifestly that the self-velocity $u$ is 
perturbed now by $\mathbf{G}_{\tilde{\eta}}$. Hence, the resultant velocity is expressed as\footnote{For brevity, we shall drop the subscript $\tilde{\eta}$ on $v_{\tilde{\eta}}$ when confusion is unlikely.  
}
\begin{equation}
v_{\tilde{\eta}}=u+ \mathbf{G}_{\tilde{\eta}},
\label{janta}
\end{equation}
where the active wind reads
$\mathbf{G}_{\tilde{\eta}}=
\mathbf{G}_{MAT}^{\perp}+(1-\tilde{\eta})\mathbf{G}_{MAT},$
where the former component stands for cross wind and the latter for effective wind. 
An equivalent formulation of the last relation is
$\mathbf{G}_{\tilde{\eta}}=\tilde{\eta} \mathbf{G}_{\dag}+(1-\tilde{\eta} )\mathbf{G}^{T},$  
since the cross wind is ``blowing'' with maximum force,  so  $\mathbf{G}_{\dag}=\mathbf{G}_{MAT}^{\perp}$ in this case. 
Moreover, it follows evidently from the above that 
\begin{equation} \mathbf{G}_{\tilde{\eta}}= -\tilde{\eta}\mathbf{G}_{MAT}+\mathbf{G}^{T},   
\label{nn}
\end{equation}
where the component $\tilde{\eta}\mathbf{G}_{MAT}$ represents the dead wind.  
In particular, it is reasonable to expect that the edge cases, 
 i.e. $\tilde{\eta}=1$ and $\tilde{\eta}=0$, will describe now, respectively, the cross-slope navigation (the action of maximum cross wind and minimum effective wind
\footnote{For clarity's sake, see \cref{fig_slippery_xslope},  where $\protect\overrightarrow{OA}=\text{Proj}_{u}\mathbf{G}^{T}$ is in general the maximum effective wind, and $\protect\overrightarrow{OA^{\protect\prime }}=\text{Proj}_{u^{\perp}}\mathbf{G}^{T}$ the maximum cross wind, for any given $\protect\theta $ and $||\protect\mathbf{G}^{T}||_{h}$. A component of the gravitational wind $\protect\mathbf{G}^{T}$ acts in full force if it is not reduced (partially or entirely), e.g. due to traction or drag.}), i.e. $\mathbf{G}_{\tilde{\eta}}=\mathbf{G}_{MAT}^{\perp}$, and the Zermelo navigation (the action of maximum both cross and effective winds
), i.e. $\mathbf{G}_{\tilde{\eta}}=\mathbf{G}^{T}$. For the sake of clarity, see  \cref{fig_slippery_xslope}.  

 Furthermore, it can be seen 
 that the slippery slope problem with the cross-traction coefficient $\eta=0$ from \cite{slippery} and  the current investigation with the new 
 along-traction coefficient $\tilde{\eta}=0$ should coincide. Then this means that the scenario
like in Zermelo's navigation under weak gravitational wind $\mathbf{G}^{T}$ will be located somewhat right in the middle between both approaches pieced together. It will also  stand  naturally for the boundary and meeting case of the slippery slope and slippery-cross-slope solutions, where the gravitational wind ``blows'' in full\footnote{Both effective and cross winds are maximal in the Zermelo case, for any direction $\theta$ and wind force $||\mathbf{G}^{T}||_{h}$.} on the mountain slope, and the time geodesics come from Finsler metric of Randers type.   
 
Finally, the current model of a slippery slope under cross gravitational wind complements the preceding investigations on the slope-of-a-mountain problems in a natural way. Namely, this fills in a missing part regarding the compensation of the along-gravity effect concerning the   direction of motion indicated by the velocity $u$.  
As we will see on further reading, the obtained strong convexity conditions, being the basis for  desired optimality of the trajectories on the slope, differ significantly from all those of the preceding navigation problems discussed above. In particular, for some $\tilde{\eta}$, it will be admitted the norm of gravitational wind $||\mathbf{G}^{T}||_{h}$ to be greater than $||u||_h=1$. For comparison, recall that the convexity condition in the Zermelo navigation, i.e. for a Randers metric is $||\mathbf{G}^{T}||_{h}<1$ (\cite{BRS}) and the corresponding conditions in both the Matsumoto and cross-slope metrics are most restrictive among all others, i.e $||\mathbf{G}^{T}||_{h}<1/2$ (\cite{slippery, cross}). Moreover, in contrast to the situation on the slippery slope investigated in \cite{slippery}, the behavior of the Finslerian indicatrix (time front), being now subject to along-traction expressed by $\tilde{\eta}$, is quite different. For instance, the new indicatrix 
 crosses both edge cases (ZNP and CROSS) in two fixed points, while the parameter $\tilde{\eta}$ is running through the interval $(0, 1)$. This is shown in the presented example with an inclined plane  (\cref{Sec_3}), comparing the outcomes obtained in this work and the preceding studies. 

\subsection{Statement of the main results 
}

The problem of time-optimal navigation on a slippery slope under the
cross-gravity effect \linebreak (S-CROSS for short) can be posed as follows

\begin{quote}
\begin{itshape}
\noindent {Suppose a craft or a vehicle goes 
 on a horizontal plane at maximum constant speed, while gravity acts 
perpendicularly 
on this plane. Imagine the craft 
 moves 
 now on a slippery cross slope of a mountain, 
with a given along-traction coefficient and under 
 gravity.  
What path should be followed by the craft 
to get 
from one point to another in the minimum time?}
\end{itshape}
\end{quote}

\noindent Our first goal is to provide the Finsler metric which serves as
the solving tool for the S-CROSS problem. More precisely, in the general context
of an $n$-dimensional Riemannian manifold with $\mathbf{G}^{T}=-\bar{g}%
\omega ^{\sharp }$, where $\omega ^{\sharp }$ is the gradient vector field
and $\bar{g}$ is the rescaled gravitational acceleration $g$ (see %
\Cref{Sec_3}), we prove the following result:

\begin{theorem}
\label{Theorem1}\textnormal{(Slippery-cross-slope metric)} Let a slippery
cross slope of a mountain  be an $n$-dimensional Riemannian manifold $(M,h)$, 
$n>1$, with the along-traction coefficient $\tilde{\eta}\in \lbrack 0,1]$ and the gravitational wind $\mathbf{G}^{T}$ on $M$. The time-minimal paths on $(M,h)$
under the action of an active wind $\mathbf{G}_{\tilde{\eta}}$ as in Eq. %
\eqref{nn} are the geodesics of the slippery-cross-slope metric $\tilde{F}_{%
\tilde{\eta}}$ which satisfies 
\begin{equation}
\tilde{F}_{\tilde{\eta}}\sqrt{\alpha ^{2}+2\bar{g}\beta \tilde{F}_{\tilde{%
\eta}}+||\mathbf{G}^{T}||_{h}^{2}\tilde{F}_{\tilde{\eta}}^{2}}=\alpha
^{2}+(2-\tilde{\eta})\bar{g}\beta \tilde{F}_{\tilde{\eta}}+(1-\tilde{\eta})||%
\mathbf{G}^{T}||_{h}^{2}\tilde{F}_{\tilde{\eta}}^{2},  \label{TH_mama}
\end{equation}%
with $\alpha =\alpha (x,y),$ $\beta =\beta (x,y)$ given by Eq. \eqref{NOT},
where either $\tilde{\eta}\in \lbrack 0,\frac{1}{3}]$ and $||\mathbf{G}%
^{T}||_{h}<\frac{1}{1-\tilde{\eta}}$, or $\tilde{\eta} \in (\frac{1}{3},1]$ and $||%
\mathbf{G}^{T}||_{h}<\frac{1}{2\tilde{\eta}}$. In particular, if $\tilde{\eta%
}=1$, then the slippery-cross-slope metric yields the cross-slope
metric, and if $\tilde{\eta}=0$, then it is the Randers metric which solves
the Zermelo navigation problem on a Riemannian manifold under a
gravitational wind $\mathbf{G}^{T}.$
\end{theorem}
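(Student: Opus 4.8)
The plan is to realize $\tilde{F}_{\tilde\eta}$ as the Finsler metric whose indicatrix is the set of resultant velocities attainable from a unit control, so that Finslerian length coincides with travel time and the time-minimal paths are precisely its geodesics. First I would simplify the active wind: since $\|u\|_h=1$, the longitudinal projection satisfies $\text{Proj}_u\mathbf{G}^T=h(\mathbf{G}^T,u)\,u$, so setting $\lambda:=h(\mathbf{G}^T,u)$, relations \eqref{nn} and \eqref{janta} collapse to the compact form $v=(1-\tilde\eta\lambda)\,u+\mathbf{G}^T$. Declaring the indicatrix to be $\{v:\|u\|_h=1\}$ amounts, by positive $1$-homogeneity, to imposing $v=y/\tilde F_{\tilde\eta}$ for a direction $y$; equivalently the vector $w:=y/\tilde F_{\tilde\eta}-\mathbf{G}^T$ must be a positive multiple of a unit control, namely $w=(1-\tilde\eta\lambda)\,u$.

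The second step is the elimination of the auxiliary scalar $\lambda$. Taking the $h$-norm squared of $w=(1-\tilde\eta\lambda)u$ and then the $h$-inner product of $w$ with $\mathbf{G}^T$ yields two scalar relations,
\begin{equation*}
(1-\tilde\eta\lambda)^2=\frac{\alpha^2}{\tilde F_{\tilde\eta}^2}-\frac{2h(\mathbf{G}^T,y)}{\tilde F_{\tilde\eta}}+\|\mathbf{G}^T\|_h^2,\qquad (1-\tilde\eta\lambda)\lambda=\frac{h(\mathbf{G}^T,y)}{\tilde F_{\tilde\eta}}-\|\mathbf{G}^T\|_h^2,
\end{equation*}
where $\alpha^2=h(y,y)$. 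Writing $t:=1-\tilde\eta\lambda$, the second relation gives $t=t^2+\tilde\eta\bigl(h(\mathbf{G}^T,y)/\tilde F_{\tilde\eta}-\|\mathbf{G}^T\|_h^2\bigr)$, into which I substitute $t^2$ from the first relation; the variable $\lambda$ cancels and one is left with a single identity $t=\sqrt{\,\cdot\,}$ equating $t$ with a quadratic in $1/\tilde F_{\tilde\eta}$. Multiplying through by $\tilde F_{\tilde\eta}^2$ and recalling $\beta=h(\omega^\sharp,y)$, so that $h(\mathbf{G}^T,y)=-\bar g\beta$, turns this identity into exactly \eqref{TH_mama}; the matching of the coefficient $2\bar g\beta$ under the radical and of $(2-\tilde\eta)\bar g\beta$ and $(1-\tilde\eta)\|\mathbf{G}^T\|_h^2$ on the right is then automatic. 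This shows that any positive solution of \eqref{TH_mama} is the travel-time function, i.e. $\tilde F_{\tilde\eta}$.

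It remains to show that \eqref{TH_mama} genuinely defines a positively $1$-homogeneous, strongly convex Finsler metric, and this is where the real work lies. I would first establish well-posedness: squaring \eqref{TH_mama} produces a quartic in $\tilde F_{\tilde\eta}$, and I would prove it has a unique positive root for every $y\neq0$, which in turn forces the right-hand side of \eqref{TH_mama} to be positive (a sign constraint that already encodes part of the admissibility). Recognizing next that $\tilde F_{\tilde\eta}/\alpha$ depends on $y$ only through $s:=\beta/\alpha$ and $b^2:=\|\omega^\sharp\|_h^2=\|\mathbf{G}^T\|_h^2/\bar g^2$, the metric is of general $(\alpha,\beta)$-type, $\tilde F_{\tilde\eta}=\alpha\,\phi(b^2,s)$, so strong convexity can be tested via the standard criterion for such metrics, namely positivity of $\phi$, of $\phi-s\phi_s$, and of $\phi-s\phi_s+(b^2-s^2)\phi_{ss}$ on the admissible range $|s|\le b$. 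Since $\phi$ is only implicit, these derivatives must be extracted by differentiating \eqref{TH_mama}, and the sign analysis of the resulting expressions is the main obstacle; I expect the threshold $\tilde\eta=\tfrac13$, together with the two bounds $\|\mathbf{G}^T\|_h<\tfrac{1}{1-\tilde\eta}$ and $\|\mathbf{G}^T\|_h<\tfrac{1}{2\tilde\eta}$, to emerge precisely as the conditions guaranteeing these inequalities (equivalently, that the indicatrix is a smooth strongly convex hypersurface enclosing the origin).

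Finally I would verify the two edge cases as a consistency check. For $\tilde\eta=0$ the right-hand side of \eqref{TH_mama} coincides with the expression under the radical, so $\tilde F_0$ solves $(1-\|\mathbf{G}^T\|_h^2)\tilde F_0^2-2\bar g\beta\,\tilde F_0-\alpha^2=0$, whose positive root is the Randers metric solving Zermelo navigation with wind $\mathbf{G}^T$, admissible exactly when $\|\mathbf{G}^T\|_h<1$. For $\tilde\eta=1$ equation \eqref{TH_mama} reduces to $\tilde F_1\sqrt{\alpha^2+2\bar g\beta\tilde F_1+\|\mathbf{G}^T\|_h^2\tilde F_1^2}=\alpha^2+\bar g\beta\tilde F_1$, which is the cross-slope metric of \cite{cross}, admissible when $\|\mathbf{G}^T\|_h<\tfrac12$; both values agree with the endpoints $\tfrac{1}{1-\tilde\eta}$ and $\tfrac{1}{2\tilde\eta}$ of the stated range, confirming the claimed specializations.
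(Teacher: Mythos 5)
Your derivation of the defining relation \eqref{TH_mama} is essentially correct and is a compact, one-shot version of the paper's Okubo-type computation: eliminating $\lambda$ from the two scalar relations reproduces exactly the algebra the paper performs in two stages. One point you pass over silently, though: writing $t=\sqrt{\,\cdot\,}$ presupposes $t=1-\tilde\eta\lambda>0$ in \emph{every} direction, i.e. $\tilde\eta\,\|\mathbf{G}_{MAT}\|_h<1$. The paper devotes Cases 2 and 3 of its Step I to showing that the regimes $t=0$ and $t<0$ yield only conic Finsler metrics (defined on proper conic subsets of $TM_0$), which must be discarded because forward motion in every direction is then impossible. Under the final bounds positivity does hold (both branches force $\tilde\eta\|\mathbf{G}^T\|_h<\tfrac12$), but a complete argument must rule those regimes out rather than assume them away.

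The genuine gap is the strong convexity statement, which is the substance of the theorem: you defer the entire derivation of the dichotomy ($\tilde\eta\in[0,\tfrac13]$ with $\|\mathbf{G}^T\|_h<\frac{1}{1-\tilde\eta}$, versus $\tilde\eta\in(\tfrac13,1]$ with $\|\mathbf{G}^T\|_h<\frac{1}{2\tilde\eta}$) to an unexecuted ``sign analysis'' of the Yu--Zhu inequalities for the implicitly defined $\phi$, saying only that you ``expect'' the threshold $\tfrac13$ to emerge. Differentiating the quartic implicitly and controlling the signs of $\phi-s\phi_2$ and $\phi-s\phi_2+(b^2-s^2)\phi_{22}$ over the whole admissible range of $(b,s)$ is precisely the hard part, and nothing in your sketch indicates how it would be carried out; tellingly, in its Section 4 the paper only ever uses the \emph{direct} implication of \cref{Prop1}, after convexity has already been secured by other means. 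The paper avoids the implicit analysis altogether by factoring the motion: first, the direction-dependent deformation $v=u-\tilde\eta\mathbf{G}_{MAT}$ yields the \emph{explicit} Matsumoto-type metric $F=\alpha^2/(\alpha+\tilde\eta\bar g\beta)$, whose convexity is settled by an elementary computation with $\phi(s)=1/(1+\tilde\eta\bar g s)$ (\cref{Lema1,Lema2}): it is a Finsler metric iff $\|\mathbf{G}^T\|_h<\frac{1}{2\tilde\eta}$. Second, adding $\mathbf{G}^T$ is a rigid translation handled by the Zermelo navigation theorem (\cref{Prop3}), which simultaneously guarantees the unique positive root of \eqref{TH_mama} (a fact you also only assert) and the strong convexity of $\tilde F_{\tilde\eta}$, provided $F(x,-\mathbf{G}^T)<1$; that condition computes to $(1-\tilde\eta)\|\mathbf{G}^T\|_h<1$ (\cref{Lema3}). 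Intersecting the two bounds gives the stated dichotomy, with $\tilde\eta=\tfrac13$ appearing simply as the crossing point of $\frac{1}{1-\tilde\eta}$ and $\frac{1}{2\tilde\eta}$ --- no implicit differentiation required. Your edge-case checks ($\tilde\eta=0$ Randers, $\tilde\eta=1$ cross-slope) agree with the paper and are fine, but without the two-step factorization, or a genuinely completed direct verification, the central convexity claim remains unproved in your proposal.
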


\noindent In addition, S-CROSS, which provides the slippery-cross-slope metric $\tilde{F}_{\tilde{\eta}}$ by Eq. \eqref{TH_mama}, leads to a new application and a natural model of Finsler spaces with general $(\alpha ,\beta )$ metrics (\cite{Yu}).

To find the time geodesics of the slippery-cross-slope metric, we
exploit its geometrical and analytical properties, the main key being Eq. %
\eqref{TH_mama}, and answering the above stated question this way. 
Thus, our second main result is
\begin{theorem}
\label{Theorem2} \textnormal{(Time geodesics)} Let a slippery cross slope of a
mountain  be an $n$-dimensional Riemannian manifold $(M,h)$, $n>1$, with the along-traction coefficient $\tilde{\eta}\in \lbrack 0,1]$ and the gravitational wind $\mathbf{G}^{T}$ on $M$. The time-minimal paths on $(M,h)$ under the action of an active wind $\mathbf{G}_{\tilde{\eta}}$ as in Eq. \eqref{nn}
are the time-parametrized solutions $\gamma (t)=(\gamma ^{i}(t)),$ $i=1,...,n$ of the ODE
system
\begin{equation}
\ddot{\gamma}^{i}(t)+2\tilde{\mathcal{G}}_{\tilde{\eta}}^{i}(\gamma (t),\dot{%
\gamma}(t))=0,  \label{GGG}
\end{equation}%
where%
\begin{eqnarray*}
\tilde{\mathcal{G}}_{\tilde{\eta}}^{i}(\gamma (t),\dot{\gamma}(t)) &=&%
\mathcal{G}_{\alpha }^{i}(\gamma (t),\dot{\gamma}(t))+\left[ \tilde{\Theta}%
(r_{00}+2\alpha ^{2}\tilde{R}r)+\alpha \tilde{\Omega}r_{0}\right] \frac{\dot{%
\gamma}^{i}(t)}{\alpha } \\
&&-\left[ \tilde{\Psi}(r_{00}+2\alpha ^{2}\tilde{R}r)+\alpha \tilde{\Pi}r_{0}%
\right] \frac{w^{i}}{\bar{g}}-\tilde{R}w_{\text{ }|j}^{i}\frac{\alpha
^{2}w^{j}}{\bar{g}^{2}},
\end{eqnarray*}%
with%
\begin{equation}
\begin{array}{l}
\mathcal{G}_{\alpha }^{i}(\gamma (t),\dot{\gamma}(t))=\frac{1}{4}%
h^{im}\left( 2\frac{\partial h_{jm}}{\partial x^{k}}-\frac{\partial h_{jk}}{%
\partial x^{m}}\right) \dot{\gamma}^{j}(t)\dot{\gamma}^{k}(t),\qquad \mathit{%
\tilde{\Psi}}=\frac{\bar{g}^{2}\alpha ^{2}}{2\tilde{E}}(\alpha ^{4}\tilde{A}%
^{2}\tilde{B}+\tilde{\eta}^{2}), \\ 
~ \\ 
r_{00}=-\frac{1}{\bar{g}}w_{j|k}\dot{\gamma}^{j}(t)\dot{\gamma}%
^{k}(t),\qquad r_{0}=\frac{1}{\bar{g}^{2}}w_{j|k}\dot{\gamma}%
^{j}(t)w^{k},\qquad r=-\frac{1}{\bar{g}^{3}}w_{j|k}w^{j}w^{k}, \\ 
~ \\ 
\tilde{R}=\frac{\bar{g}^{2}}{2\alpha ^{4}\tilde{B}}[\left( 1-\tilde{\eta}%
\right) \alpha ^{2}\tilde{B}-\tilde{\eta}],\qquad \mathit{\tilde{\Theta}}=%
\frac{\bar{g}\alpha }{2\tilde{E}}(\alpha ^{6}\tilde{A}\tilde{B}^{2}-\tilde{%
\eta}^{2}\bar{g}\beta ), \\ 
~ \\ 
\mathit{\tilde{\Omega}}=\frac{\bar{g}^{2}}{\alpha ^{2}\tilde{B}\tilde{E}}%
\{[\left( 1-\tilde{\eta}\right) \alpha ^{2}\tilde{B}-\tilde{\eta}](\alpha
^{6}\tilde{B}^{3}+\tilde{\eta}^{2}||\mathbf{G}^{T}||_{h}^{2})-\tilde{\eta}%
^{2}\alpha ^{2}(\bar{g}\beta \tilde{B}+||\mathbf{G}^{T}||_{h}^{2}\tilde{A}%
)\}, \\ 
~ \\ 
\mathit{\tilde{\Pi}}=\frac{\bar{g}^{3}}{2\alpha ^{3}\tilde{B}\tilde{E}}%
\{[\left( 1-\tilde{\eta}\right) \alpha ^{2}\tilde{B}-\tilde{\eta}](2\alpha
^{6}\tilde{A}\tilde{B}^{2}-\tilde{\eta}^{2}\bar{g}\beta )+\tilde{\eta}%
^{2}\alpha ^{2}\tilde{B}(2\alpha ^{2}+\bar{g}\beta )\}, \\ 
~ \\ 
\tilde{A}=-\frac{1}{\alpha ^{2}}\{\left[ 1-\left( 2-\tilde{\eta}\right)
\left( 1-\tilde{\eta}\right) ||\mathbf{G}^{T}||_{h}^{2}\right] -(2-\tilde{%
\eta})^{2}\bar{g}\beta -(2-\tilde{\eta})\alpha ^{2}\}, \\ 
~ \\ 
\tilde{B}=-\frac{1}{\alpha ^{2}}\{[1-2(1-\tilde{\eta})||\mathbf{G}%
^{T}||_{h}^{2}]-2(2-\tilde{\eta})\bar{g}\beta -2\alpha ^{2}\}, \\ 
~ \\ 
\tilde{C}=\frac{1}{\alpha }\left( \alpha ^{2}\tilde{B}+\bar{g}\beta \tilde{A}%
\right) ,\qquad \tilde{E}=\alpha ^{6}\tilde{B}\tilde{C}^{2}+(||\mathbf{G}%
^{T}||_{h}^{2}\alpha ^{2}-\bar{g}^{2}\beta ^{2})(\alpha ^{4}\tilde{A}^{2}%
\tilde{B}+\tilde{\eta}^{2})%
\end{array}
\label{geo_tilde}
\end{equation}%
and $\alpha =\alpha (\gamma (t),\dot{\gamma}(t)),$ $\beta =\beta (\gamma (t),%
\dot{\gamma}(t))$, and $w^{i}$ denoting the components of $\mathbf{G}^{T}$.
\end{theorem}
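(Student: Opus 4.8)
The plan is to prove \Cref{Theorem2} by computing the geodesic spray coefficients of the Finsler metric $\tilde{F}_{\tilde{\eta}}$ supplied by \Cref{Theorem1} and then reading off \eqref{GGG} as its geodesic equation $\ddot{\gamma}^{i}+2\tilde{\mathcal{G}}^{i}_{\tilde{\eta}}=0$. First I would note that the defining relation \eqref{TH_mama} is homogeneous of degree two under the simultaneous scaling $(\alpha,\beta,\tilde{F}_{\tilde{\eta}})\mapsto(\lambda\alpha,\lambda\beta,\lambda\tilde{F}_{\tilde{\eta}})$, the coefficients $\bar{g}$ and $\|\mathbf{G}^{T}\|_{h}^{2}$ being functions of position only. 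Hence, for each fixed $x$, the solution $\tilde{F}_{\tilde{\eta}}$ is positively homogeneous of degree one in $(\alpha,\beta)$ and can be written as a general $(\alpha,\beta)$-metric $\tilde{F}_{\tilde{\eta}}=\alpha\,\phi(b^{2},s)$, with $s=\beta/\alpha$ and $b^{2}=\|\mathbf{G}^{T}\|_{h}^{2}/\bar{g}^{2}$; the strong convexity hypotheses of \Cref{Theorem1} guarantee that $\phi$ defines a genuine Finsler norm, so the full geodesic apparatus applies.

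The second step is to extract from \eqref{TH_mama}, by implicit differentiation, the first and second partials of $\tilde{F}_{\tilde{\eta}}$ (equivalently of $\phi$) with respect to $\alpha$ and $\beta$. Differentiating the relation and solving for these partials produces rational expressions in $\alpha,\beta,\bar{g},\|\mathbf{G}^{T}\|_{h}^{2},\tilde{\eta}$; I expect precisely the quantities $\tilde{A},\tilde{B},\tilde{C}$ of \eqref{geo_tilde} to surface as the coefficients of the fundamental tensor $g_{ij}$, which for an $(\alpha,\beta)$-metric is a combination of $h_{ij}$, of $b_{i}b_{j}$, and of the symmetrized products of $b_{i}$ and $y_{i}$. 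The scalar $\tilde{E}$ should then emerge as the determinant-type factor controlling the inverse $g^{ij}$, so that $1/\tilde{E}$ is the common denominator of all correction coefficients. Checking that $\tilde{E}\neq 0$ throughout the stated ranges of $\tilde{\eta}$ and $\|\mathbf{G}^{T}\|_{h}$ is exactly the point where the convexity conditions of \Cref{Theorem1} re-enter.

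With these derivatives in hand, I would invoke the standard spray formula for general $(\alpha,\beta)$-metrics (\cite{Yu}), which expresses $G^{i}$ as $\mathcal{G}^{i}_{\alpha}$ plus correction terms aligned with $y^{i}/\alpha$, with $b^{i}$, and with the contractions of $\nabla b$. The decisive simplification is that $\mathbf{G}^{T}=-\bar{g}\,\omega^{\sharp}$ is a gradient field, so $\beta$ is closed and the antisymmetric part $s_{ij}$ of $\nabla b$ vanishes identically; every $s^{i}{}_{0}$- and $s_{0}$-term in the general formula therefore drops out. What survives are the three scalar contractions of the symmetric part $r_{ij}=b_{i|j}$, namely $r_{00}=r_{ij}y^{i}y^{j}$, $r_{0}=r_{ij}y^{i}b^{j}$ and $r=r_{ij}b^{i}b^{j}$ (this is exactly the normalization recorded in \eqref{geo_tilde} once one writes $w^{i}=-\bar{g}\,b^{i}$), together with the single vector contraction $r^{i}{}_{j}b^{j}=\bar{g}^{-2}w^{i}{}_{|j}w^{j}$. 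The presence of $r$ and of $r^{i}{}_{j}b^{j}$ is precisely the footprint of the $b^{2}$-dependence of $\phi$ — the feature distinguishing a general $(\alpha,\beta)$-metric from a classical one — since $\partial_{x^{k}}b^{2}$ feeds these into the spray.

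The final step is bookkeeping: substituting the Step-2 derivatives into the reduced formula and collecting terms, one packages the coefficients into $\tilde{R},\tilde{\Theta},\tilde{\Omega},\tilde{\Psi},\tilde{\Pi}$ exactly as in \eqref{geo_tilde}, obtaining
\[
\tilde{\mathcal{G}}^{i}_{\tilde{\eta}}=\mathcal{G}^{i}_{\alpha}+\bigl[\tilde{\Theta}(r_{00}+2\alpha^{2}\tilde{R}r)+\alpha\tilde{\Omega}r_{0}\bigr]\tfrac{\dot{\gamma}^{i}}{\alpha}-\bigl[\tilde{\Psi}(r_{00}+2\alpha^{2}\tilde{R}r)+\alpha\tilde{\Pi}r_{0}\bigr]\tfrac{w^{i}}{\bar{g}}-\tilde{R}\,w^{i}{}_{|j}\tfrac{\alpha^{2}w^{j}}{\bar{g}^{2}},
\]
whence \eqref{GGG} is its geodesic equation. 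As consistency checks I would specialize $\tilde{\eta}=0$, which must reproduce the Randers spray of Zermelo navigation, and $\tilde{\eta}=1$, which must recover the cross-slope spray of \cite{cross}. The main obstacle is computational rather than conceptual: because $\tilde{F}_{\tilde{\eta}}$ is known only implicitly, every required derivative must be obtained from \eqref{TH_mama} and then reduced back using that same relation, and the real work is to carry out this elimination so that the superficially unwieldy inverse fundamental tensor collapses to the single factor $\tilde{E}$ and the correction coefficients assemble into the compact forms of \eqref{geo_tilde}.
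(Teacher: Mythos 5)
Your proposal follows essentially the same route as the paper's proof: rewrite $\tilde{F}_{\tilde{\eta}}=\alpha\tilde{\phi}_{\tilde{\eta}}(||\mathbf{G}^{T}||_{h}^{2},s)$ as a general $(\alpha,\beta)$-metric via homogeneity, obtain the derivatives $\tilde{\phi}_{\tilde{\eta}1},\tilde{\phi}_{\tilde{\eta}2},\tilde{\phi}_{\tilde{\eta}12},\tilde{\phi}_{\tilde{\eta}22}$ by implicit differentiation of the defining quartic, apply the Yu--Zhu spray formula for general $(\alpha,\beta)$-metrics with every $s_{ij}$-term vanishing because $\mathbf{G}^{T}$ is a gradient wind, and collect the result into $\tilde{R},\tilde{\Theta},\tilde{\Omega},\tilde{\Psi},\tilde{\Pi}$. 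The details you leave implicit --- the contradiction arguments showing the auxiliary denominators $B$ and $C$ never vanish on the admissible range, and the final substitution $\tilde{F}_{\tilde{\eta}}(\gamma,\dot{\gamma})=1$ along time-parametrized trajectories, which turns the general spray coefficients into the specific formulas of \eqref{geo_tilde} --- are exactly the bookkeeping the paper carries out in its technical lemmas, and your sketch anticipates them correctly.
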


\noindent 

\noindent 

Our paper is organized in the following way. \Cref{Sec_2} gives a condensed exposition
of some general notions and results regarding Riemann-Finsler geometry
that are thereafter used in the proofs of the main results. \Cref{Sec_3}
is devoted to the proof of \Cref{Theorem1}, appropriately divided into two
steps which are based on the decomposition of the active wind $\mathbf{G}_{%
\tilde{\eta}}$. The first one refers to the direction-dependent deformation
of the background Riemannian metric, analyzing the impact of the
along-traction coefficient $\tilde{\eta}$ and force of dead wind $\tilde{\eta}\mathbf{G}_{MAT}$ (\Cref{Lema1,Lema2}). Further on, a rigid
translation by the gravitational wind $\mathbf{G}^{T}$ 
leads to the slippery-cross-slope metric in the second step, including primarily the
analysis of the strong convexity conditions (\Cref{Lema3}). \Cref{Sec_4}
describes the proof of \Cref{Theorem2} which is based on some technical
results formulated into \Cref{Lema4,Lema5,Prop5}. Finally, \Cref{Sec_5} deals with a few examples in dimension 2, including the discussion and comparison of indicatrices' behavior in relation to various 
 traction on the hillslope and force of a gravitational wind.

\section{Preliminaries
}
\label{Sec_2}

Before the proofs of our results are presented we briefly recall some miscellaneous notations and notions from Riemann-Finsler geometry which will be used on further reading; for more details, see,  e.g., \cite{chern_shen,B-Miron,BRS,SH,Yu,Kristaly,Y-Sabau,CJS,Musznay}.


Let $M$ be a manifold of dimension $n$ and $TM=\underset{x\in M}{\cup }%
T_{x}M $ be the tangent bundle which is itself a manifold, where $T_{x}M$ is
the tangent space at $x\in M.$ By considering the coordinates $(x^{i}),$ $%
i=1,...,n$ in a local chart in $x$, $\left\{ \frac{\partial }{\partial x^{i}}%
\right\} $ is the natural basis for $TM$, and thus, for any $y\in T_{x}M,$ $%
y=y^{i}\frac{\partial }{\partial x^{i}}$. It turns out that $(x^{i},y^{i}),$ 
$i=1,...,n$ denotes the coordinates on a local chart in $(x,y)\in TM.$ A 
\textit{Finsler metric} $F$ is a natural generalization of a Riemannian
metric on $M$. More precisely, the pair $(M,F)$ is a Finsler manifold if $%
F:TM\rightarrow \lbrack 0,\infty )$ is the continuous function with a few
additional properties:

i) $F$ is a $C^{\infty }$-function on the slit tangent bundle $%
TM_{0}=TM\backslash \{0\}$;

ii) positive homogeneity, i.e., $F(x,{c}y)={c}F(x,y)$, for all ${c}>0$;

iii) positive definiteness of the Hessian $g_{ij}(x,y)=\frac{1}{2}\frac{%
\partial ^{2}F^{2}}{\partial y^{i}\partial y^{j}},$ for all $(x,y)\in
TM_{0}. $

\noindent The last property means that the indicatrix of $F$, denoted by $%
I_{F}=\left\{ (x,y)\in TM_{0}\text{ }|\text{ }F(x,y)=1\right\}$, is strongly
convex.

A \textit{spray} on $M$ is a smooth vector field on $TM_{0},$ locally given
by $S=y^{i}\frac{\partial }{\partial x^{i}}-2\mathcal{G}^{i}\frac{\partial }{%
\partial y^{i}},$ where the functions $\mathcal{G}^{i}=\mathcal{G}^{i}(x,y),$
$i=1,...,n$ are positively\ homogeneous of degree two with respect to $y,$
i.e. $\mathcal{G}^{i}(x,{c}y)={c}^{2}\mathcal{G}^{i}(x,y)$, for all ${c}>0,$
and they denote the spray coefficients, \cite{chern_shen}. If the spray $S$
is induced by a Finsler metric ${F=}\sqrt{g_{ij}(x,y)y^{i}y^{j}}$, then its
spray coefficients $\mathcal{G}^{i}$ are defined as 
\begin{equation}
\mathcal{G}^{i}=\frac{1}{4}g^{il}\big\{\lbrack {F}^{2}]_{x^{k}y^{l}}y^{k}-[{F%
}^{2}]_{x^{l}}\big\}=\frac{1}{4}g^{il}\left( 2\frac{\partial g_{jl}}{%
\partial x^{k}}-\frac{\partial g_{jk}}{\partial x^{l}}\right) y^{j}y^{k}.
\label{S1}
\end{equation}%
Let $\gamma :[0,1]\rightarrow M$ be a regular piecewise $C^{\infty }$-curve
on $M,$ $\gamma (t)=(\gamma ^{i}(t)),$ $i=1,...,n.$ The curve $\gamma $ is
called $F$-geodesic if its velocity vector $\dot{\gamma}(t)=\frac{d\gamma }{%
dt}$ is parallel along the curve, i.e. in the local coordinates, $\gamma
^{i}(t),$ $i=1,...,n$ are the solutions of the ODE system%
\begin{equation}
\ddot{\gamma}^{i}(t)+2\mathcal{G}^{i}(\gamma (t),\dot{\gamma}(t))=0.
\label{geo1}
\end{equation}

By considering a quadratic form $\alpha ^{2}=a_{ij}(x)y^{i}y^{j}$, where $%
a_{ij}(x)$ is a Riemannian metric, and a differential $1$-form $\beta
=b_{i}(x)dx^{i}$, expressed also as $\beta =b_{i}y^{i}$, on n-dimensional
manifold $M$, we recall the following notations 
\begin{equation}
\begin{array}{l}
r_{ij}=\frac{1}{2}(b_{i|j}+b_{j|i}),\quad r_{i}=b^{j}r_{ji},\quad
r^{i}=a^{ij}r_{j},\quad r_{00}=r_{ij}y^{i}y^{j},\quad r_{0}=r_{i}y^{i},\quad
r=b^{i}r_{i},\quad \\ 
~ \\ 
s_{ij}=\frac{1}{2}(b_{i|j}-b_{j|i}),\quad s_{i}=b^{j}s_{ji},\quad
s^{i}=a^{ij}s_{j},\quad s_{0}^{i}=a^{ij}s_{jk}y^{k},\quad s_{0}=s_{i}y^{i},%
\end{array}
\label{rs}
\end{equation}%
with $b^{j}=a^{ji}b_{i}$, $b_{i|j}=\frac{\partial b_{i}}{\partial x^{j}}%
-\Gamma _{ij}^{k}b_{k}$ and $\Gamma _{ij}^{k}=\frac{1}{2}a^{km}\left( \frac{%
\partial a_{jm}}{\partial x^{i}}+\frac{\partial a_{im}}{\partial x^{j}}-%
\frac{\partial a_{ij}}{\partial x^{m}}\right) $ being the Christoffel
symbols of the metric $a_{ij}$. We notice that $\beta $ is closed if and
only if $s_{ij}=0$ \cite{chern_shen}.

Moreover, endowed with a quadratic form $\alpha ^{2}$ and a differential $1$%
-form $\beta $, the manifold $(M,F)$ is called Finsler manifold with general 
$(\alpha ,\beta )$-metric if $F$ can be read as $F=\alpha \phi (b^{2},s).$
This is for positive $C^{\infty }$-functions $\phi (b^{2},s)$ in the
variables $b^{2}$ and $s$, with $|s|\leq b<b_{0}$ and $0<b_{0}\leq \infty $,
where $s=\frac{\beta }{\alpha }$ and $b=||\beta ||_{\alpha }=\sqrt{%
a^{ij}b_{i}b_{j}}$ (for more details, see \cite{Yu}). The slippery slope
metrics and cross-slope metric, recently presented in \cite{slippery,cross},
provide the examples of general $(\alpha ,\beta )$-metrics. In particular,
if the function $\phi $ depends only on the variable $s$, then by $F=\alpha
\phi (s)$ one can emphasize $(\alpha ,\beta )$-metrics. For instance,
Randers metric $F=\alpha +\beta ,$ with $\phi (s)=1+s$ solves the Zermelo
navigation problem under the influence of a weak wind, i.e. $|s|\leq b<1$ 
\cite{chern_shen}. Another example is the Matsumoto metric $F=\frac{\alpha
^{2}}{\alpha -\beta },$ with $\phi (s)={\frac{1}{1-s}}$ and $|s|\leq b<{%
\frac{1}{2}},$ which solves Matsumoto's slope-of-a-mountain problem \cite%
{matsumoto}.

Further on, we recall few key results for our arguments.

\begin{proposition}
\label{Prop1} \cite{Yu} Let $M$ be an $n$-dimensional manifold. $F=\alpha
\phi (b^{2},s)$ is a Finsler metric for any Riemannian metric $\alpha $ and $%
1$-form $\beta ,$ with $||\beta ||_{\alpha }<b_{0}$ if and only if $\phi
=\phi (b^{2},s)$ is a positive $C^{\infty }$-function satisfying%
\begin{equation*}
\phi -s\phi _{2}>0,\text{ \ \ }\phi -s\phi _{2}+(b^{2}-s^{2})\phi _{22}>0,
\end{equation*}%
when $n\geq 3$ or%
\begin{equation*}
\phi -s\phi _{2}+(b^{2}-s^{2})\phi _{22}>0,
\end{equation*}%
when $n=2$, where $s=\frac{\beta }{\alpha }$ and $b=||\beta ||_{\alpha }$  satisfy  $|s|\leq b<b_{0}$.
\end{proposition}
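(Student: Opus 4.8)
The plan is to prove the statement by directly analysing the positive-definiteness of the fundamental tensor $g_{ij}=\frac{1}{2}[F^{2}]_{y^{i}y^{j}}$ of $F=\alpha \phi(b^{2},s)$, since by property (iii) of a Finsler metric this positivity is exactly what it means for $F$ to be Finslerian. Throughout I would treat $b^{2}=\|\beta\|_{\alpha}^{2}$ as fixed with respect to the fibre variable $y$, so that when differentiating in $y$ only the dependence of $\phi$ on $s=\beta/\alpha$ intervenes.

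First I would record the elementary fibre derivatives $\alpha_{y^{i}}=y_{i}/\alpha$ (with $y_{i}=a_{ij}y^{j}$) and $s_{y^{i}}=\frac{1}{\alpha}\!\left(b_{i}-s\,\alpha_{y^{i}}\right)$, and then differentiate $F^{2}=\alpha^{2}\phi^{2}$ twice in $y$. Collecting terms, the fundamental tensor acquires the canonical rank-two-perturbation form
\[
g_{ij}=\rho\,a_{ij}+\rho_{0}\,b_{i}b_{j}+\rho_{1}(b_{i}y_{j}+b_{j}y_{i})+\rho_{2}\,y_{i}y_{j},
\]
where $\rho=\phi(\phi-s\phi_{2})$ and $\rho_{0},\rho_{1},\rho_{2}$ are explicit expressions in $\phi,\phi_{2},\phi_{22}$ and $\alpha,s$. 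This reduces the whole question to the positivity of a symmetric matrix that is a scalar multiple of $a_{ij}$ plus a correction supported on $\mathrm{span}\{y,b\}$.

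Second, the key computational step is the determinant identity
\[
\det(g_{ij})=\phi^{\,n+1}(\phi-s\phi_{2})^{\,n-2}\big[(\phi-s\phi_{2})+(b^{2}-s^{2})\phi_{22}\big]\det(a_{ij}),
\]
which I would obtain by passing to an $a$-orthonormal frame adapted to the plane $\mathrm{span}\{y,b\}$: on the $(n-2)$-dimensional orthogonal complement $g$ acts as the scalar $\rho=\phi(\phi-s\phi_{2})$, while on the $2$-plane spanned by $y$ and $b$ the matrix is an explicit $2\times2$ block whose determinant produces the bracketed factor. The degenerate configuration $y\parallel b$ (i.e.\ $|s|=b$) would be handled by a continuity argument, using that the admissible domain is $|s|\le b<b_{0}$.

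Finally, positive-definiteness is read off from this diagonalisation together with the homogeneity relation $g_{ij}y^{i}y^{j}=F^{2}>0$ (Euler's theorem), which shows that $g$ is automatically positive in the direction $y$. For $n=2$ the plane $\mathrm{span}\{y,b\}$ exhausts the tangent space, so $g$ is positive-definite as soon as $\det(g_{ij})>0$, that is, since $\phi>0$, as soon as $\phi-s\phi_{2}+(b^{2}-s^{2})\phi_{22}>0$, which is the single stated condition. For $n\ge3$ one additionally needs positivity on the $(n-2)$-dimensional orthogonal complement, where $g$ acts as $\rho$, forcing $\phi-s\phi_{2}>0$; combined with the determinant factor this yields the two stated inequalities. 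Requiring all of this for every admissible $s$ with $|s|\le b<b_{0}$ gives the equivalence. I expect the main obstacle to be step two: correctly isolating the $2\times2$ block and establishing the factorised determinant, and rigorously recovering the borderline directions $y\parallel b$ by a limiting argument.
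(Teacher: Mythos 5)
This proposition is not proved in the paper at all: it is quoted verbatim from the reference \cite{Yu} (Yu--Zhu), so there is no internal argument to compare yours against. That said, your proposal follows what is essentially the standard proof of this result in the literature, and it is sound. The crucial observation you implicitly use -- that $b^{2}=\|\beta\|_{\alpha}^{2}$ depends only on $x$, so all fibre derivatives see only the $s$-dependence of $\phi$ -- is exactly why the computation reduces to the classical $(\alpha,\beta)$-metric case; your rank-two-perturbation form of $g_{ij}$ with $\rho=\phi(\phi-s\phi_{2})$ is correct, the determinant factorisation $\det(g_{ij})=\phi^{n+1}(\phi-s\phi_{2})^{n-2}\bigl[\phi-s\phi_{2}+(b^{2}-s^{2})\phi_{22}\bigr]\det(a_{ij})$ is the classical identity (cf.\ Lemma 1.1.2 in \cite{chern_shen}), and the splitting into the $a$-orthogonal complement of $\mathrm{span}\{y,b^{\sharp}\}$ (where $g$ acts by the scalar $\rho$, forcing $\phi-s\phi_{2}>0$ precisely when $n\geq 3$) plus the $2\times 2$ block (where $g(y,y)=F^{2}>0$ and positivity of the block determinant give definiteness) correctly produces the stated dimension dichotomy.

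One point needs tightening: handling the borderline directions $|s|=b$ ``by continuity'' is not sufficient as stated, since a limit of positive-definite matrices is in general only positive semi-definite. The repair is cheap and worth making explicit: at $s=\pm b$ the term $(b^{2}-s^{2})\phi_{22}$ vanishes, so the hypothesis there reads $\phi-s\phi_{2}>0$, and your own determinant identity gives $\det(g_{ij})=\phi^{n+1}(\phi-s\phi_{2})^{n-1}\det(a_{ij})>0$; semi-definiteness from the limit combined with a nonvanishing determinant then upgrades to strict definiteness. With that patch, and with the routine remark that the converse direction follows by realising any admissible pair $(b,s)$ with $|s|\leq b<b_{0}$ through a suitable choice of $\alpha$, $\beta$ and $y$, your argument is complete.
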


\noindent We mention that $\phi _{1}$ and $\phi _{2}$ denote the derivatives
of the function $\phi $ with respect to the first variable $b^{2}$ and the
second variable $s$, respectively. In the same way, $\phi _{12}$ and $\phi
_{22}$ denote the derivatives of $\phi _{1}$ and $\phi _{2}$ with respect to 
$s.$ In particular, when $\phi $ is a function only of variable $s$, then
the derivatives $\phi _{2}$ and $\phi _{22}$ will be simply denoted by $\phi
^{\prime }$ and $\phi ^{\prime \prime }$, respectively.

\begin{proposition}
\label{Prop2} \cite{Yu} For a general $(\alpha ,\beta )$\textit{-}metric $%
F=\alpha \phi (b^{2},s)$, its spray coefficients $\mathcal{G}^{i}$ are
related to the spray coefficients $\mathcal{G}_{\alpha }^{i}$ of $\alpha $ by%
\begin{eqnarray*}
\mathcal{G}^{i} &=&\mathcal{G}_{\alpha }^{i}+\alpha Qs_{0}^{i}+\left\{
\Theta (-2\alpha Qs_{0}+r_{00}+2\alpha ^{2}Rr)+\alpha \Omega
(r_{0}+s_{0})\right\} \frac{y^{i}}{\alpha } \\
&&+\left\{ \Psi (-2\alpha Qs_{0}+r_{00}+2\alpha ^{2}Rr)+\alpha \Pi
(r_{0}+s_{0})\right\} b^{i}-\alpha ^{2}R(r^{i}+s^{i}),
\end{eqnarray*}%
where%
\begin{eqnarray*}
Q &=&\frac{\phi _{2}}{\phi -s\phi _{2}},\qquad \qquad \qquad \qquad \quad \
\ \Theta\ \ =\ \ \frac{(\phi -s\phi _{2})\phi _{2}-s\phi \phi _{22}}{2\phi
\lbrack \phi -s\phi _{2}+(b^{2}-s^{2})\phi _{22}]}, \\
\Psi &=&\frac{\phi _{22}}{2[\phi -s\phi _{2}+(b^{2}-s^{2})\phi _{22}]},\quad
\ \ \ \ \Pi \ \ =\ \ \frac{(\phi -s\phi _{2})\phi _{12}-s\phi _{1}\phi _{22}%
}{(\phi -s\phi _{2})[\phi -s\phi _{2}+(b^{2}-s^{2})\phi _{22}]}, \\
\Omega &=&\frac{2\phi _{1}}{\phi }-\frac{s\phi +(b^{2}-s^{2})\phi _{2}}{\phi 
}\Pi, \qquad \ R\ \ =\ \frac{\phi _{1}}{\phi -s\phi _{2}}.
\end{eqnarray*}
\end{proposition}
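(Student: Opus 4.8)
The plan is to prove the formula by direct computation from the spray definition \eqref{S1}, namely $\mathcal{G}^i = \tfrac14 g^{il}\{[F^2]_{x^ky^l}y^k - [F^2]_{x^l}\}$, specialized to $F = \alpha\phi(b^2,s)$. The computation splits naturally into three blocks: (i) the fundamental tensor $g_{ij}$ and its inverse $g^{il}$, which depend only on the fibre variables $y$; (ii) the $x$- and mixed $xy$-derivatives of $F^2 = \alpha^2\phi^2$, which is where the geometry of $\beta$ enters through the covariant derivative $b_{i|j}$; and (iii) the contraction and collection of the resulting terms into the four tensorial channels appearing in the stated decomposition.

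First I would compute the fundamental structure. Since $b^2 = a^{ij}b_ib_j$ is independent of $y$, differentiating $F$ in $y$ never produces $\phi_1$, and one finds $F_{y^i} = (\phi - s\phi_2)\tfrac{y_i}{\alpha} + \phi_2 b_i$, with $y_i = a_{ij}y^j$. Using $g_{ij} = F_{y^i}F_{y^j} + F F_{y^iy^j}$, this yields the rank-two form $g_{ij} = \rho\, a_{ij} + \rho_0\, b_ib_j + \rho_1(b_iy_j + b_jy_i) + \rho_2\, y_iy_j$, where $\rho, \rho_0, \rho_1, \rho_2$ are explicit functions of $\alpha, s, \phi, \phi_2, \phi_{22}$ only. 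A structural point worth isolating is that this is identical in form to the classical $(\alpha,\beta)$ case: the $b^2$-dependence of $\phi$ does not touch $g_{ij}$. I would then invert $g_{ij}$ by a standard rank-two matrix-inversion lemma; the denominators $\phi - s\phi_2$ and $\phi - s\phi_2 + (b^2 - s^2)\phi_{22}$ appear exactly, which is why \Cref{Prop1} guarantees that $g_{ij}$ is positive-definite and $g^{il}$ is well-defined. The coefficients $Q, R, \Theta, \Psi, \Pi, \Omega$ will ultimately be assembled from these same denominators.

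Next I would treat the $x$-derivatives, which is where the only new ingredient lives. Writing $[F^2]_{x^k} = \phi^2 (\alpha^2)_{x^k} + 2\alpha^2\phi\phi_2\, s_{x^k} + 2\alpha^2\phi\phi_1 (b^2)_{x^k}$, I replace $(\alpha^2)_{x^k}$ by the Christoffel symbols of $a_{ij}$ so that the Riemannian spray $\mathcal{G}_\alpha^i$ is produced, and I replace $\beta_{x^k}$ and $(b^2)_{x^k}$ using $b_{i|j} = r_{ij} + s_{ij}$ from \eqref{rs}. Contracting with $y$, with $b$, and with $a^{-1}$ then produces exactly the quantities $r_{00}, r_0, r, s_0, s_0^i, r^i, s^i$, together with $(b^2)_{|k} = 2(r_k + s_k)$ coming from the $b^2$-dependence. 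The terms carrying $\phi_1$ and $\phi_{12}$ arise solely from this last piece; tracking them is what distinguishes the general case from the classical $F = \alpha\phi(s)$ metric, and it is precisely what supplies the $\phi_1$-parts of $R, \Omega, \Pi$.

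Finally I would substitute (i)--(iii) into \eqref{S1}, contract with $g^{il}$, and reorganize. Using positive $2$-homogeneity of $\mathcal{G}^i$ in $y$ and the algebraic identities among $\phi, \phi_2, \phi_{22}, \phi_1, \phi_{12}$, every term aligns into one of four channels: a multiple of $y^i/\alpha$, a multiple of $b^i$, the antisymmetric term $\alpha Q s_0^i$, and the residual $-\alpha^2 R(r^i + s^i)$. Reading off the coefficients of $y^i/\alpha$ and of $b^i$ then yields the combinations defining $\Theta, \Omega$ and $\Psi, \Pi$, respectively. I expect the main obstacle to be this last bookkeeping step: the $s_0^i$ term is the only piece not proportional to $y^i$ or $b^i$ and must be separated out using the antisymmetry of $s_{ij}$, while one must verify that the $\phi_1$ and $\phi_{12}$ contributions assemble into exactly the stated $\Omega, \Pi, R$ rather than leaking into $\Theta, \Psi$. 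Confirming these cancellations, and checking homogeneity-degree consistency at each stage, is the delicate part of the argument.
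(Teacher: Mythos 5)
The paper does not prove \cref{Prop2} at all---it quotes the result from Yu--Zhu \cite{Yu}---and your plan is precisely the argument of that reference: compute the rank-two fundamental tensor $g_{ij}=\rho a_{ij}+\rho_0 b_ib_j+\rho_1(b_iy_j+b_jy_i)+\rho_2 y_iy_j$ (correctly noting that $\phi_1$ never enters since $b^2$ is $y$-independent), invert it with the two denominators $\phi-s\phi_2$ and $\phi-s\phi_2+(b^2-s^2)\phi_{22}$ guaranteed nonzero by \cref{Prop1}, split $b_{i|j}=r_{ij}+s_{ij}$ with $(b^2)_{|k}=2(r_k+s_k)$, and collect the contractions of \eqref{S1} into the $y^i/\alpha$, $b^i$, $s_0^i$ and $r^i+s^i$ channels. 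The sketch is sound, including the key structural point that the $\phi_1$ and $\phi_{12}$ contributions arise solely from the $x$-dependence of $b^2$ and assemble exactly into $R$, $\Omega$, $\Pi$, which is what distinguishes the general $(\alpha,\beta)$ case from the classical one.
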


Beyond the assigned meaning of the Zermelo navigation as the research
problem, it is also applied as an effective technique to construct new
Finsler metrics by perturbing an arbitrary Finsler metric by a vector field $%
W,$ i.e. a time-independent wind on a manifold $M$, under some restrictions.
In particular, considering the background metric as a Riemannian one,
denoted by $h,$ the Randers metrics solve Zermelo's problem of navigation in
the case of weak winds $W$, i.e. $||W||_{h}<1$ \cite{BRS,chern_shen}.
Moreover, if $W$ is a critical wind, i.e. $||W||_{h}=1,$ then the problem is
solved by Kropina metric \cite{Y-Sabau}.

\begin{proposition}
\label{Prop3} \cite{SH} Let $(M,F)$ be a Finsler manifold and $W$ a vector
field on $M$ such that $F(x,-W)<1$. Then the solution of the Zermelo
navigation problem with the navigation data $(F,W)$ is a Finsler metric $%
\tilde{F}$ obtained by solving the equation%
\begin{equation}
F(x,y-\tilde{F}(x,y)W)=\tilde{F}(x,y),\text{ }  \label{MAIN}
\end{equation}%
for any $y\in T_{x}M$, $x\in M.$
\end{proposition}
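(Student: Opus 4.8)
The plan is to establish Proposition~\ref{Prop3} geometrically, by identifying $\tilde{F}$ as the Minkowski gauge of a rigidly translated indicatrix and then reading off the implicit equation \eqref{MAIN}. First I would fix $x\in M$ and work on the single tangent space $T_xM$. Let $\Sigma=\{u\in T_xM: F(x,u)=1\}$ be the $F$-indicatrix and $B=\{u: F(x,u)\le 1\}$ the closed unit ball of $F$, a strongly convex body with $0$ in its interior. In the Zermelo model the craft selects a self-velocity $u$ with $F(x,u)\le 1$ and is advected by $W$, so its resultant velocity is $v=u+W$; at full self-speed $F(x,u)=1$ the set of displacements attainable in unit time is exactly the translated indicatrix $\Sigma+W$, bounding the translated body $B+W$.

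Second, I would \emph{define} $\tilde{F}(x,y)$ to be the time needed to cover the displacement $y$ at full self-speed, i.e. the unique $\lambda>0$ for which $y/\lambda\in\Sigma+W$. Writing $v=y/\lambda$ and $u=v-W$, the full-speed condition $F(x,u)=1$ reads $F(x,\,y/\lambda-W)=1$, and multiplying by $\lambda$ and using the positive homogeneity of $F$ gives $F(x,\,y-\lambda W)=\lambda$; setting $\lambda=\tilde{F}(x,y)$ is precisely \eqref{MAIN}. Thus $\tilde{F}$ is nothing but the gauge (Minkowski functional) of the convex body $B+W$, which automatically makes it positively homogeneous of degree one in $y$.

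Third, I would verify that $\tilde{F}$ so defined is a bona fide Finsler metric, this being the step where the hypothesis $F(x,-W)<1$ enters. The condition is equivalent to $-W\in\mathrm{int}\,B$, i.e. $0\in\mathrm{int}(B+W)$, which guarantees that for each $y\neq 0$ the ray $\mathbb{R}_{>0}\,y$ meets $\Sigma+W$ in a unique point, hence that \eqref{MAIN} has a unique positive solution $\tilde{F}(x,y)$. Smoothness of $\tilde{F}$ on $TM\setminus\{0\}$ follows from the implicit function theorem applied to \eqref{MAIN}, whose derivative in $\lambda$ is nonzero away from $0$ because the defining ray is transverse to the strongly convex hypersurface $\Sigma+W$. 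Strong convexity of the $\tilde{F}$-indicatrix $\Sigma+W$ is inherited from that of $\Sigma$, translation being an affine map that preserves the second-order convexity of a hypersurface; by the standard equivalence between strong convexity of the indicatrix and positive-definiteness of the fundamental tensor $g_{ij}=\frac{1}{2}[\tilde{F}^2]_{y^iy^j}$, this yields property (iii) of a Finsler metric, while positivity $\tilde{F}>0$ is immediate from $0\in\mathrm{int}(B+W)$.

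Finally, I would close the optimal-control loop. Because $\tilde{F}$ is positively homogeneous of degree one, the traversal time of any regular curve $\gamma$ equals its $\tilde{F}$-length $\int \tilde{F}(\gamma,\dot{\gamma})\,dt$; minimizing time is therefore the same as minimizing $\tilde{F}$-length, so the time-optimal paths are exactly the $\tilde{F}$-geodesics and $\tilde{F}$ solves the navigation problem. The main obstacle I anticipate is the third step — proving that the implicitly defined $\tilde{F}$ is smooth and strongly convex (not merely a convex gauge) and that these properties hold uniformly in $x$; the key leverage throughout is the weak-wind bound $F(x,-W)<1$, which keeps the origin strictly inside the translated unit ball and thereby preserves the defining properties of a Finsler norm under the translation $u\mapsto u+W$.
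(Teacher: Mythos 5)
Your proposal is correct and follows essentially the same route as the paper, which states this result as a citation to \cite{SH} and, in the remarks immediately after it, sketches exactly your argument: uniqueness of the positive solution of Eq.~\eqref{MAIN} from the strong convexity of $I_F$ together with $F(x,-W)<1$ (via \cite{CJS}), the same condition ensuring that $\tilde{F}$ is a Finsler metric with strongly convex indicatrix (the rigid translation $I_F + W$), and the identification $\tilde{F}(\gamma(t),\dot{\gamma}(t))=1$ of $\tilde{F}$-length with travel time. Your gauge/translation formulation, the transversality justification for the implicit function theorem, and the observation that $F(x,-W)<1$ is equivalent to the origin lying in the interior of the translated unit ball are precisely the standard ingredients of the cited proof.
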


\noindent We notice that Eq. \eqref{MAIN} has a unique positive solution $%
\tilde{F}$ for any $y\in T_{x}M$ because of the strong convexity of the
indicatrix $I_{F}$ and $F(x,-W)<1$ (see \cite{CJS}). Moreover, the sharp
inequality $F(x,-W)<1$ ensures that $\tilde{F}$ is a Finsler metric and
thus, the indicatrix \linebreak $I_{\tilde{F}}=\left\{ (x,y)\in TM|\tilde{F}%
(x,y)=1\right\} $ is strongly convex. In addition, for any regular piecewise 
$C^{\infty }$-curve $\gamma :[0,1]\rightarrow M$ parametrized by time that
represents a trajectory in Zermelo's problem, the $\tilde{F}$-length of $%
\gamma $ is $1,$ i.e. $\tilde{F}(\gamma (t),\dot{\gamma}(t))=1,$ where $\dot{%
\gamma}(t)$ is the velocity vector \cite{chern_shen}.


\section{Proof of Theorem 1.1}
\label{Sec_3}

\label{Sec_3.1}Posing the navigation problem on a slippery slope of a mountain
represented by an $n$-dimensional Riemannian manifold $(M,h)$, $n>1,$ under
the action of the active wind $\mathbf{G}_{\tilde{\eta}}$ given by Eq. 
\eqref{nn} here, supplies the slippery-cross-slope metrics as well as the
necessary and sufficient conditions for their strong convexity. As in \cite%
{slippery,Nicprw,cross}, the gravitational wind $\mathbf{G}^{T}=-\bar{g}%
\omega ^{\sharp }$ turns out to be the main tool in our study, where $\bar{g}
$ is the rescaled magnitude of the acceleration of gravity $g$ (i.e. $\bar{g}%
=\lambda g,$ $\lambda >0)$, and $\omega ^{\sharp }=h^{ji}\frac{\partial p}{%
\partial x^{j}}\frac{\partial }{\partial x^{i}}$ is the gradient vector
field, where $p:M\rightarrow \mathbb{R}$ is a $C^{\infty }$-function on $M$.

Let $u$ be the self-velocity of a moving craft on the slope. We assume $||u||_{h}=1,$ as  standard in most theoretical investigations on the Zermelo navigation
(see, e.g. \cite{BRS}). Taking into account the effect of the active
wind $\mathbf{G}_{\tilde{\eta}}$, the resultant velocity $v_{\tilde{\eta}}=u+%
\mathbf{G}_{\tilde{\eta}}$ allows us to describe the slippery-cross-slope
metrics following the same technique as in \cite{slippery}. Indeed, a
crucial role is played by the active wind, because it can be expressed as $\mathbf{G}_{\tilde{\eta}}=-\tilde{\eta}\mathbf{G}_{MAT}+\mathbf{G}^{T},$ $\tilde{\eta}\in \lbrack 0,1]$, where $\mathbf{G}_{MAT}$ is the orthogonal projection of $\mathbf{G}^{T}$ on $u$. We notice that in this geometric context only the gravitational wind is known, the vector field $\mathbf{G}_{MAT}$ depends on direction of the
self-velocity. In order to carry out the proof of \cref{Theorem1}, we conveniently
divided it into two steps including a sequence of cases and lemmas. The first
step describes deformation of the background Riemannian metric by the vector
field $-\tilde{\eta}\mathbf{G}_{MAT}$, which is a direction-dependent
deformation. The second step is mostly based on the resulting Finsler metric $F$
of Matsumoto type, afforded by the first step. This is deformed by the
gravitational vector field $\mathbf{G}^{T}$, i.e. a rigid translation, under
the condition $F(x,-\mathbf{G}^{T})<1$ which practically ensures that 
a craft on the mountainside can go forward in any direction (for more details, see \cite{SH}).

\paragraph{Step I}

We show that, when $\tilde{\eta}||\mathbf{G}_{MAT}||_{h}<1,$ the
direction-dependent deformation of the background Riemannian metric $h$ by
the vector field $-\tilde{\eta}\mathbf{G}_{MAT}$ defines a Finsler metric
if and only if $||\mathbf{G}^{T}||_{h}<\frac{1}{2\tilde{\eta}},$ for any $%
\tilde{\eta}\in (0,1]$. More precisely, the deformation of the Riemannian
metric $h$ by the vector field $-\tilde{\eta}\mathbf{G}_{MAT}$ invokes the
expression of the resulting velocity, i.e. $v=u-\tilde{\eta}\mathbf{G}_{MAT}$%
, for any $\tilde{\eta}\in (0,1].$ Notice that if $\tilde{\eta}=0,$ this
turns out $v=u$. Otherwise, when $\tilde{\eta}||\mathbf{G}_{MAT}||_{h}\geq 1$
at some direction, this deformation cannot provide a Finsler metric.

Let $\theta $ be the angle between $\mathbf{G}^{T}$ and $u$. Because $%
\mathbf{G}_{MAT}=$Proj$_{u}\mathbf{G}^{T}$, the vectors $v,$ $u$ and $%
\mathbf{G}_{MAT}$ are collinear. Moreover, denoting by $\bar{\theta}$ the
angle between $u$ and $\mathbf{G}_{MAT},$ we see that $\bar{\theta}$ can be $%
0$ or $\pi $, or it is not determined if $\theta $ is $\frac{\pi }{2}$ or $%
\frac{3\pi }{2}$ (i.e. $u$ and $\mathbf{G}^{T}$ are orthogonal and $\mathbf{G%
}_{MAT}=0$).

Taking into account all possibilities for the force of $-\tilde{\eta}\mathbf{%
G}_{MAT}$, we then analyze the following cases: 1. $\tilde{\eta}||\mathbf{G}%
_{MAT}||_{h}<1,$ 2. $\tilde{\eta}||\mathbf{G}_{MAT}||_{h}=1$ and 3. $\tilde{%
\eta}||\mathbf{G}_{MAT}||_{h}>1.$

\noindent \paragraph{Case 1} Assuming that $\tilde{\eta}||\mathbf{G}_{MAT}||_{h}<1$, the angle between $%
\mathbf{G}^{T}$ and $v$ is also $\theta , $ i.e. the vectors $u$ and $v$
point the same direction. Regarding $\bar{\theta},$ we study two subcases:

i) If $\bar{\theta}=0$ (going downhill), then $\theta \in \lbrack 0,\frac{%
\pi }{2})\cup (\frac{3\pi }{2},2\pi )$ and the angle between $\mathbf{G}^{T}$
and $\mathbf{G}_{MAT}$ is either $\theta $ or $2\pi -\theta $. It is clear
that $||\mathbf{G}_{MAT}||_{h}=||\mathbf{G}^{T}||_{h}\cos \theta $ and $h(v,%
\mathbf{G}_{MAT})=||v||_{h}||\mathbf{G}_{MAT}||_{h}=||v||_{h}||\mathbf{G}%
^{T}||_{h}\cos \theta =h(v,\mathbf{G}^{T})$. It also results $\tilde{\eta}||%
\mathbf{G}^{T}||_{h}\cos \theta <1$ and $\frac{h(v,\mathbf{G}^{T})}{||v||_{h}%
}<\frac{1}{\tilde{\eta}},$ $\tilde{\eta}\in (0,1].$

ii) If $\bar{\theta}=\pi $ (going uphill), then $\theta \in (\frac{\pi }{2},%
\frac{3\pi }{2})$ and the angle between $\mathbf{G}^{T}$ and $\mathbf{G}%
_{MAT}$ is $|\pi -\theta| $. Thus, we have $||\mathbf{G}_{MAT}||_{h}=-||%
\mathbf{G}^{T}||_{h}\cos \theta $ and $h(v,\mathbf{G}_{MAT})=-||v||_{h}||%
\mathbf{G}_{MAT}||_{h}=||v||_{h}||\mathbf{G}^{T}||_{h}\cos \theta =h(v,%
\mathbf{G}^{T})$. This implies that $-\tilde{\eta}||\mathbf{G}^{T}||_{h}\cos
\theta <1$ and $-\frac{h(v,\mathbf{G}^{T})}{||v||_{h}}<\frac{1}{\tilde{\eta}}%
,$ $\tilde{\eta}\in (0,1].$

By the above possibilities and noting that $v=u$ if $\theta \in \{\frac{\pi 
}{2},\frac{3\pi }{2}\}$, it turns out that 
\begin{equation}
h(v,\mathbf{G}_{MAT})=h(v,\mathbf{G}^{T})=||v||_{h}||\mathbf{G}%
^{T}||_{h}\cos \theta ,\text{ for any }\theta \in \lbrack 0,2\pi ).
\label{I.1}
\end{equation}%
Moreover, the condition $\tilde{\eta}||\mathbf{G}_{MAT}||_{h}<1$ can be
rewritten as 
\begin{equation}
\tilde{\eta}||\mathbf{G}^{T}||_{h}|\cos \theta |<1\,\ \ \text{\ \ or \ \ }%
\frac{|h(v,\mathbf{G}^{T})|}{||v||_{h}}<\frac{1}{\tilde{\eta}},\text{ }
\label{I.2}
\end{equation}%
for any $\tilde{\eta}\in (0,1]$ and $\theta \in \lbrack 0,2\pi ).$ We note
that, due to the condition $\tilde{\eta}||\mathbf{G}_{MAT}||_{h}<1$, there
is not any direction where the resultant vector $v$ vanishes. Now, making
use of Eqs. \eqref{I.1} and \eqref{I.2}, a straightforward computation
starting with $1=||u||_{h}=||v+\tilde{\eta}\mathbf{G}_{MAT}||_{h}$ leads us
to the equation 
\begin{equation*}
||v||_{h}^{2}+2\tilde{\eta}||v||_{h}||\mathbf{G}^{T}||_{h}\cos \theta -(1-%
\tilde{\eta}^{2}||\mathbf{G}^{T}||_{h}^{2}\cos ^{2}\theta )=0
\end{equation*}%
which admits a sole positive root, that is 
\begin{equation}
||v||_{h}=1-\tilde{\eta}||\mathbf{G}^{T}||_{h}\cos \theta ,\text{ for any }%
\theta \in \lbrack 0,2\pi ).  \label{I.3}
\end{equation}%
\bigskip If we rewrite \eqref{I.3} as $g_{1}(x,v)=0,$ where $%
g_{1}(x,v)=||v||_{h}^{2}-||v||_{h}+\tilde{\eta}h(v,\mathbf{G}^{T})$, then we
obtain the function $F(x,v)=\frac{||v||_{h}^{2}}{||v||_{h}-\tilde{\eta}h(v,%
\mathbf{G}^{T})}$ as the solution of the equation $g_{1}(x,\frac{v}{F})=0,$
by Okubo's method \cite{matsumoto}. The function $F(x,v)$ can be extended to
an arbitrary nonzero vector $y\in T_{x}M,$ for any $x\in M,$ the outcome
being the positive homogeneous $C^{\infty }$-function on $TM_{0}$, namely 
\begin{equation}
F(x,y)=\frac{||y||_{h}^{2}}{||y||_{h}-\tilde{\eta}h(y,\mathbf{G}^{T})}\text{ 
},\text{ \ for any \ }\tilde{\eta}\in (0,1].  \label{eta_mat}
\end{equation}%
This is because any nonzero $y$ can be expressed as $y=cv,$ $c>0,$ and $%
F(x,v)=1$. Actually, we are able to prove that $\tilde{\eta}||\mathbf{G}%
_{MAT}||_{h}<1$ is a neccesary and sufficient condition for $F(x,y)$,
obtained in \eqref{eta_mat}, to be positive on all $TM_{0}.$ Indeed, the
positivity of \eqref{eta_mat} on $TM_{0}$ means that
\begin{equation}
||y||_{h}-\tilde{\eta}h(y,\mathbf{G}^{T})>0,  \label{ineq}
\end{equation}
for all nonzero $y$ and any $\tilde{\eta}\in (0,1].$ If the positivity holds
on $TM_{0},$ we can substitute $y$ with $\mathbf{G}^{T}~\neq~0$ into \eqref{ineq} and thus, $\tilde{\eta}||\mathbf{G}%
^{T}||_{h}<1.$ Since $||\mathbf{G}_{MAT}||_{h}\leq ||\mathbf{G}^{T}||_{h}$
in any direction, ($||\mathbf{G}_{MAT}||_{h}=||\mathbf{G}^{T}||_{h}|\cos
\theta |,$ for any $\theta \in \lbrack 0,2\pi )$), it turns out that $\tilde{%
\eta}||\mathbf{G}_{MAT}||_{h}<1$ on all $TM_{0}.$ Conversly, suppose $\tilde{%
\eta}||\mathbf{G}_{MAT}||_{h}<1$ on all $TM_{0}$ (the case $\mathbf{G}_{MAT}=0$
is also included). Using \eqref{I.2}, it results $\frac{|h(y,\mathbf{G}^{T})|}{%
||y||_{h}}<\frac{1}{\tilde{\eta}}$ for any nonzero $y,$ which assures %
\eqref{ineq}. Thus, $F(x,y)$ is positive on $TM_{0}.$

Making use of the same notations as in \cite{slippery,cross} 
\begin{equation}
\alpha ^{2}=||y||_{h}^{2}=h_{ij}y^{i}y^{j}\text{ \ \ \ \ and \ \ }\beta =-%
\frac{1}{\bar{g}}h(y,\mathbf{G}^{T})=h(y,\omega ^{\sharp })=b_{i}y^{i},
\label{NOT}
\end{equation}%
$\alpha =\alpha (x,y),$ $\beta =\beta (x,y)$ and $||\beta ||_{h}=||\omega
^{\sharp }||_{h},$ the resulting function \eqref{eta_mat} is of Matsumoto
type, i.e. 
\begin{equation}
F(x,y)=\frac{\alpha ^{2}}{\alpha +\tilde{\eta}\bar{g}\beta },\text{ for any
\ }\tilde{\eta}\in (0,1],  \label{Matsumoto_eta}
\end{equation}%
with the corresponding indicatrix%
\begin{equation*}
I_{F}=\left\{ (x,y)\in TM_{0}\text{ }|\text{ }\alpha ^{2}(\alpha +\tilde{\eta%
}\bar{g}\beta )^{-1}=1\right\} .
\end{equation*}%
Moreover, $F(x,y)$ can be extended continuously to all $TM,$ i.e. $F(x,0)=0$
for any $x\in M,$ because $y=0$ does not lie in the closure in $TM$ of the
indicatrix $I_{F}$ \cite{CJS}.

In order to establish the necessary and sufficient conditions for the
function \eqref{Matsumoto_eta} to be a Finsler metric for any $\tilde{\eta}%
\in (0,1]$, let us write $F(x,y)=\alpha \phi (s),$ where $\phi (s)=\frac{1}{%
1+\tilde{\eta}\bar{g}s}\,$\ and $s=\frac{\beta }{\alpha }.$ Since the second
inequality in \eqref{I.2} can be read as $|s|<\frac{1}{\tilde{\eta}\bar{g}}$%
, for arbitrary nonzero $y\in T_{x}M$ and $x\in M$, it turns out that $\phi $
is a positive $C^{\infty }$-function in this case.

Now, we are able to prove some additional properties regarding $\phi $ and
to control the force of the gravitational wind $\mathbf{G}^{T}$ via the
variable $s.$ More precisely, we have

\begin{lemma}
\label{Lema1} For any $\tilde{\eta}\in (0,1]$, fhe following statements are
equivalent:

\begin{itemize}
\item[i)] $\phi (s)-s\phi ^{\prime }(s)+(b^{2}-s^{2})\phi ^{\prime \prime
}(s)>0$, where $b=||\omega ^{\sharp }||_{h};$

\item[ii)] $|s|\leq b<b_{0}$, where $b_{0}=\frac{1}{2\tilde{\eta}\bar{g}}$;

\item[iii)] $||\mathbf{G}^{T}||_{h}<\frac{1}{2\tilde{\eta}}.$
\end{itemize}
\end{lemma}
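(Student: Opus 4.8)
The plan is to reduce all three statements to a single scalar inequality on the product $\tilde{\eta}\bar{g}b$ and to read off the equivalences from there. First I would record the derivatives of $\phi(s)=(1+\tilde{\eta}\bar{g}s)^{-1}$, namely $\phi'(s)=-\tilde{\eta}\bar{g}(1+\tilde{\eta}\bar{g}s)^{-2}$ and $\phi''(s)=2\tilde{\eta}^2\bar{g}^2(1+\tilde{\eta}\bar{g}s)^{-3}$, and substitute them into the left-hand side of i). The key algebraic observation I expect is that, after putting everything over the common denominator $(1+\tilde{\eta}\bar{g}s)^3$, the $s^2$-terms of the numerator cancel, leaving the affine expression
\[
\phi(s)-s\phi'(s)+(b^2-s^2)\phi''(s)=\frac{1+3\tilde{\eta}\bar{g}s+2\tilde{\eta}^2\bar{g}^2b^2}{(1+\tilde{\eta}\bar{g}s)^3}.
\]
Because the second inequality in \eqref{I.2} gives $|s|<\frac{1}{\tilde{\eta}\bar{g}}$, the denominator is strictly positive, so i) is equivalent to positivity of the numerator $N(s):=1+3\tilde{\eta}\bar{g}s+2\tilde{\eta}^2\bar{g}^2b^2$.

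Next I would exploit that $N$ is affine in $s$ with positive slope $3\tilde{\eta}\bar{g}$, so its minimum over the admissible range $|s|\le b$ is attained at $s=-b$, a value genuinely realised by $y$ proportional to $-\omega^{\sharp}$. Hence i), understood for all admissible directions, holds if and only if $N(-b)=1-3\tilde{\eta}\bar{g}b+2\tilde{\eta}^2\bar{g}^2b^2>0$. Writing $t:=\tilde{\eta}\bar{g}b$, this is the quadratic condition $(2t-1)(t-1)>0$, i.e. $t<\frac{1}{2}$ or $t>1$. The positivity condition $\tilde{\eta}||\mathbf{G}^T||_h<1$ already secured in Step~I (equivalently $t<1$) rules out the branch $t>1$, so i) is equivalent to $t<\frac{1}{2}$.

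It then remains to translate $t<\frac{1}{2}$ back into ii) and iii), which is purely notational: from \eqref{NOT} together with $\mathbf{G}^T=-\bar{g}\omega^{\sharp}$ one has $b=||\omega^{\sharp}||_h=||\mathbf{G}^T||_h/\bar{g}$, whence $t=\tilde{\eta}\bar{g}b=\tilde{\eta}||\mathbf{G}^T||_h$. Thus $t<\frac{1}{2}$ reads $||\mathbf{G}^T||_h<\frac{1}{2\tilde{\eta}}$, which is iii), and dividing by $\tilde{\eta}\bar{g}$ gives $b<\frac{1}{2\tilde{\eta}\bar{g}}=b_0$, which is ii). The only genuinely delicate point, as opposed to routine computation, is justifying that the pointwise inequality i) must be imposed over the full range $|s|\le b$ and that the extremal value $s=-b$ is attained, so that the worst-case reduction is valid in both directions; once this is granted, the chain i) $\Leftrightarrow t<\frac{1}{2}\Leftrightarrow$ iii) $\Leftrightarrow$ ii) closes the argument.
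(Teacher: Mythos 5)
Your proposal is correct and takes essentially the same route as the paper: both reduce the strong convexity inequality to its worst case at $s=-b$ (a value genuinely attained by $y$ proportional to $-\omega^{\sharp}$) and then translate $b<\frac{1}{2\tilde{\eta}\bar{g}}$ into iii) via $b=||\mathbf{G}^{T}||_{h}/\bar{g}$. The only packaging difference is that the paper keeps the numerator as $(1+\tilde{\eta}\bar{g}s)(1+2\tilde{\eta}\bar{g}s)+2\tilde{\eta}^{2}\bar{g}^{2}(b^{2}-s^{2})$ and drops the nonnegative second term for the converse, whereas you expand it to the affine $1+3\tilde{\eta}\bar{g}s+2\tilde{\eta}^{2}\bar{g}^{2}b^{2}$, minimize at $s=-b$, and discard the spurious branch $\tilde{\eta}\bar{g}b>1$ of the factorization $(2t-1)(t-1)>0$ by the standing Step I condition $\tilde{\eta}||\mathbf{G}^{T}||_{h}<1$ --- a legitimate appeal, since the same hypothesis is what makes the denominator $(1+\tilde{\eta}\bar{g}s)^{3}$ positive in the paper's argument as well.
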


\begin{proof}
i) $\Leftrightarrow $ ii). The Cauchy-Schwarz inequality $|h(y,\omega
^{\sharp })|\leq ||y||_{h}||\omega ^{\sharp }||_{h}$ turns out $|s|\leq
||\omega ^{\sharp }||_{h}.$ Thus it is clear that $|s|\leq b$ and due to $%
|s|<\frac{1}{\tilde{\eta}\bar{g}}$, we get $(b^{2}-s^{2})\phi ^{\prime
\prime }(s)=(b^{2}-s^{2})\frac{2\tilde{\eta}^{2}\bar{g}^{2}}{(1+\tilde{\eta}%
\bar{g}s)^{3}}\geq 0$. Moreover, the minimum value of $(b^{2}-s^{2})\phi
^{\prime \prime }(s)$ is achieved when $|s|=b.$

If we assume that $\phi (s)-s\phi ^{\prime }(s)+(b^{2}-s^{2})\phi ^{\prime
\prime }(s)>0$, then for $s=-b$ it gives $1-2\tilde{\eta}\bar{g}b>0$, and so 
$b<\frac{1}{2\tilde{\eta}\bar{g}}.$ Therefore, $|s|\leq b<\frac{1}{2\tilde{%
\eta}\bar{g}}.$

Conversely, if $|s|\leq b<\frac{1}{2\tilde{\eta}\bar{g}}$, then%
\begin{equation*}
\phi (s)-s\phi ^{\prime }(s)+(b^{2}-s^{2})\phi ^{\prime \prime }(s)=\frac{(1+%
\tilde{\eta}\bar{g}s)(1+2\tilde{\eta}\bar{g}s)+2\tilde{\eta}^{2}\bar{g}%
^{2}(b^{2}-s^{2})}{(1+\tilde{\eta}\bar{g}s)^{3}}
\end{equation*}%
which implies that $\phi (s)-s\phi ^{\prime }(s)+(b^{2}-s^{2})\phi ^{\prime
\prime }(s)\geq \frac{(1+\tilde{\eta}\bar{g}s)(1+2\tilde{\eta}\bar{g}s)}{(1+%
\tilde{\eta}\bar{g}s)^{3}}=\frac{1+2\tilde{\eta}\bar{g}s}{(1+\tilde{\eta}%
\bar{g}s)^{2}}~>~0.$

ii) $\Leftrightarrow $ iii). If\textbf{\ }$||\mathbf{G}^{T}||_{h}<\frac{1}{2%
\tilde{\eta}}$ and making use of the inequality $|s|\leq ||\omega ^{\sharp
}||_{h}$ and $||\omega ^{\sharp }||_{h}=\frac{1}{\bar{g}}||\mathbf{G}%
^{T}||_{h}$, it results $|s|\leq \frac{1}{\bar{g}}||\mathbf{G}^{T}||_{h}<%
\frac{1}{2\tilde{\eta}\bar{g}}.$ The converse implication is trivial.
\end{proof}

\noindent

We notice that the statement $|s|\leq b<\frac{1}{2\tilde{\eta}\bar{g}},$ for
any $\tilde{\eta}\in (0,1]$ also implies that $\phi (s)-s\phi ^{\prime
}(s)>0.$ Now we are in a position to apply \cite[Lemma 1.1.2]{chern_shen}
and \cref{Prop1}, obtaining the following result.

\begin{lemma}
\label{Lema2} For any $\tilde{\eta}\in (0,1]$, the following statements
are equivalent:

\begin{itemize}
\item[i)] $F(x,y)=\frac{\alpha ^{2}}{\alpha +\tilde{\eta}\bar{g}\beta }$ is
a Finsler metric;

\item[ii)] $||\mathbf{G}^{T}||_{h}<\frac{1}{2\tilde{\eta}}$.
\end{itemize}
\end{lemma}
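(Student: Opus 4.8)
The plan is to apply \cref{Prop1} to the metric $F=\alpha\phi(s)$ with $\phi(s)=\frac{1}{1+\tilde{\eta}\bar{g}s}$, which depends on $s$ alone, so that $\phi_1=\phi_{12}=0$ and only $\phi'=\phi_2$ and $\phi''=\phi_{22}$ survive; the positive-definiteness criterion of \cref{Prop1} then reduces to the standard $(\alpha,\beta)$-metric conditions. The text has already checked that $\phi$ is a positive $C^\infty$-function on the range $|s|<\frac{1}{\tilde{\eta}\bar{g}}$, so the positivity hypothesis of \cref{Prop1} is in place, and the remaining work is purely to match the convexity inequalities of \cref{Prop1} against the equivalent statements already established in \cref{Lema1}.

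For the implication ii) $\Rightarrow$ i), I would assume $||\mathbf{G}^{T}||_{h}<\frac{1}{2\tilde{\eta}}$. By \cref{Lema1} this is equivalent both to $|s|\leq b<\frac{1}{2\tilde{\eta}\bar{g}}$ and to the inequality $\phi-s\phi'+(b^{2}-s^{2})\phi''>0$. When $n=2$ this single inequality is exactly the hypothesis of \cref{Prop1}, so $F$ is a Finsler metric. When $n\geq 3$ I additionally need $\phi-s\phi'>0$; this is supplied by the remark immediately following \cref{Lema1}, which records that $|s|\leq b<\frac{1}{2\tilde{\eta}\bar{g}}$ forces $\phi-s\phi'>0$. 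Hence both conditions of \cref{Prop1} hold and $F$ is a Finsler metric in every dimension $n>1$.

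For the converse i) $\Rightarrow$ ii), I would suppose $F$ is a Finsler metric. In either dimensional regime \cref{Prop1} guarantees the inequality $\phi-s\phi'+(b^{2}-s^{2})\phi''>0$, since it appears in both the $n=2$ and the $n\geq 3$ criteria. By the equivalence i) $\Leftrightarrow$ iii) of \cref{Lema1} this inequality is equivalent to $||\mathbf{G}^{T}||_{h}<\frac{1}{2\tilde{\eta}}$, which is precisely ii).

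I do not expect a serious obstacle here, as the analytic heavy lifting was already carried out in \cref{Lema1}; the proof is essentially a bookkeeping argument tying \cref{Prop1} to that lemma. The only point requiring genuine care is the dimensional split, namely ensuring that the extra first-order condition $\phi-s\phi'>0$ needed when $n\geq 3$ is indeed covered — which is exactly why the remark between \cref{Lema1} and the present statement was recorded in advance.
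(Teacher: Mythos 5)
Your proposal is correct and is essentially the paper's own argument: the paper gives no displayed proof of this lemma, stating only that it follows by applying \cite[Lemma 1.1.2]{chern_shen} and \cref{Prop1} in light of \cref{Lema1} and the remark that $|s|\leq b<\frac{1}{2\tilde{\eta}\bar{g}}$ forces $\phi-s\phi'>0$. Your writeup simply makes that bookkeeping explicit, including the dimensional split, exactly as intended.
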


\noindent In summary, we emphasize that the indicatrix $I_{F}$ is strongly
convex if and only if $||\mathbf{G}^{T}||_{h}<\frac{1}{2\tilde{\eta}}$, for
any $\tilde{\eta}\in (0,1].$

\noindent \paragraph{Case 2} Now we assume that $\tilde{\eta}||\mathbf{G}_{MAT}||_{h}=1.$ We notice that
a traverse of a mountain, i.e. $\theta \in \{\frac{\pi }{2},\frac{3\pi }{2}%
\} $ cannot be followed here because it gives $\tilde{\eta}||\mathbf{G}%
_{MAT}||=0,$ which contradicts our assumption.

Due to the fact that $||\mathbf{G}_{MAT}||_{h}\leq ||\mathbf{G}^{T}||_{h}$,
it turns out that $||\mathbf{G}^{T}||_{h}\geq \frac{1}{\tilde{\eta}}$, for
any $\tilde{\eta}\in (0,1].$ By analyzing two possibilities for $\bar{\theta}%
,$ we have:

i) If $\bar{\theta}=0,$ then $\theta \in \lbrack 0,\frac{\pi }{2})\cup (%
\frac{3\pi }{2},2\pi ).$ Hence $u=\tilde{\eta}\mathbf{G}_{MAT}$, and thus $%
||v||_{h}=0$. This means that the resultant velocity $v$ vanishes, while
attempting to go down on the slope.

ii) If $\bar{\theta}=\pi ,$ then $\theta \in (\frac{\pi }{2},\frac{3\pi }{2}%
) $ and $u=-\tilde{\eta}\mathbf{G}_{MAT}$. It results that $||\mathbf{G}%
_{MAT}||_{h}=-||\mathbf{G}^{T}||_{h}\cos \theta $ and $h(v,\mathbf{G}%
_{MAT})=-||v||_{h}||\mathbf{G}_{MAT}||_{h}=||v||_{h}||\mathbf{G}%
^{T}||_{h}\cos \theta =h(v,\mathbf{G}^{T}).$ Since $v=u-\tilde{\eta}\mathbf{G%
}_{MAT},$ then $v=-2\tilde{\eta}\mathbf{G}_{MAT}$ and thus, $||v||_{h}=2.$ A
few consequences of the last equation can be pointed out. On one hand, the
relation $1=$ $||u||_{h}=||v+\tilde{\eta}\mathbf{G}_{MAT}||_{h}$ leads to $%
\tilde{\eta}h(v,\mathbf{G}^{T})=-2.$ This equation, together with $2||%
\mathbf{G}^{T}||_{h}\cos \theta =h(v,\mathbf{G}^{T})$ and $||\mathbf{G}%
^{T}||_{h}\geq \frac{1}{\tilde{\eta}}$ give us $\cos \theta \leq -1$.
Therefore, the angle $\theta $ can only be $\pi $ and so, $\mathbf{G}_{MAT}=\mathbf{G}^{T}$ and  $\tilde{\eta}||\mathbf{G}^{T}||_{h}=1$, for any $%
\tilde{\eta}\in (0,1]$. On the other hand, we can write $||v||_{h}=2$ as $%
g_{2}(x,v)=0,$ where $g_{2}(x,v)=||v||_{h}-2.$ By applying Okubo's technique 
\cite{matsumoto}, one obtains the function%
\begin{equation}
F(x,y)=\frac{1}{2}||y||_{h},  \label{KROP}
\end{equation}%
which is the solution of the equation $g_{2}(x,\frac{y}{F})=0,$ for any
nonzero tangent vector $y$ collinear with $\mathbf{G}^{T}\neq 0$ such that for each $x\in M,$ $y=-c\mathbf{G}^{T},$ $c>0,$ 
because only the direction corresponding to $\theta =\pi $ can be
followed, for any $\tilde{\eta}\in (0,1]$.  Thus, we define the open conic
subset $\mathcal{A}=\{(x,y)~\in~TM\ |\  y=-c\mathbf{G}^{T},$ $c>0\},$ i.e. 
for each $x\in M,$ $\mathcal{A}_{x}=\mathcal{A\ \cap\ }T_{x}M$ satisfies: if $%
y\in \mathcal{A}_{x},$ then $\lambda y$ $\in \mathcal{A}_{x}$ for every $%
\lambda >0,$ \cite{CJS,JS}. The function \eqref{KROP} can be treated as a
conic Finsler metric (i.e. $F$ satisties the properties of a Finsler metric
only on $\mathcal{A}$, \cite{CJS,JS}) which is homothetic with the background Riemannian metric $h$ on $\mathcal{A}$. Anyway, this case does not povide
a Finsler metric.

\noindent \paragraph{Case 3} Under the requirement $\tilde{\eta}||\mathbf{G}_{MAT}||_{h}>1$ it follows
that the resultant velocity $v$ and $-\tilde{\eta}\mathbf{G}_{MAT}$ point
the same (uphill) direction and also it implies that $||\mathbf{G}^{T}||_{h}>%
\frac{1}{\tilde{\eta}}$. Further on, we split the study into two subcases:

i) If $\bar{\theta}=0$, then it results $\theta \in \lbrack 0,\frac{\pi }{2}%
)\cup (\frac{3\pi }{2},2\pi )$ and $\measuredangle (\mathbf{G}^{T},v)\in
\{\pi +\theta ,\theta -\pi \}$. Consequently, $||\mathbf{G}_{MAT}||_{h}=||%
\mathbf{G}^{T}||_{h}\cos \theta $ and $h(v,\mathbf{G}_{MAT})=-||v||_{h}||%
\mathbf{G}_{MAT}||_{h}=-||v||_{h}||\mathbf{G}^{T}||_{h}\cos \theta =h(v,%
\mathbf{G}^{T}).$ Also, it follows that $\tilde{\eta}||\mathbf{G}%
^{T}||_{h}\cos \theta >1$ and $-\frac{h(v,\mathbf{G}^{T})}{||v||_{h}}>\frac{1%
}{\tilde{\eta}},$ for any $\tilde{\eta}\in (0,1].$

ii) If $\bar{\theta}=\pi ,$ then $\theta \in (\frac{\pi }{2},\frac{3\pi }{2}%
) $ and the angle between $\mathbf{G}^{T}$ and $v$ is also $\theta $. In
consequence, it turns out that

$||\mathbf{G}_{MAT}||_{h}=-||\mathbf{G}^{T}||_{h}\cos \theta $ and $h(v,%
\mathbf{G}_{MAT})=-||v||_{h}||\mathbf{G}_{MAT}||_{h}=||v||_{h}||\mathbf{G}%
^{T}||_{h}\cos \theta =h(v,\mathbf{G}^{T}).$ Moreover, it results $\tilde{%
\eta}||\mathbf{G}^{T}||_{h}\cos \theta <-1$ and $-\frac{h(v,\mathbf{G}^{T})}{%
||v||_{h}}>\frac{1}{\tilde{\eta}},$ for any $\tilde{\eta}\in (0,1].$

We point out that if $\theta \in \{\frac{\pi }{2},\frac{3\pi }{2}\}$, we then get $v=u$. This yields $\tilde{\eta}||G_{MAT}||_{h}=0$, which is contrary
to our assumption. 

To sum up, by both of the above subcases and since $G_{MAT}$ cannot be
vanished, we obtain 
\begin{equation}
h(v,\mathbf{G}_{MAT})=h(v,\mathbf{G}^{T})=-||v||_{h}||\mathbf{G}%
^{T}||_{h}|\cos \theta |,\ \text{for any}\ \theta \in \lbrack 0,2\pi
)\smallsetminus \{\pi /2,3\pi /2\},  \label{II.1}
\end{equation}%
and the condition $\tilde{\eta}||\mathbf{G}_{MAT}||_{h}>1$ is equivalent
to 
\begin{equation}
|\cos \theta |>\frac{1}{\tilde{\eta}||\mathbf{G}^{T}||_{h}}\,\ \ \text{\ \
or \ \ }-\frac{h(v,\mathbf{G}^{T})}{||v||_{h}}>\frac{1}{\tilde{\eta}},\text{ 
}  \label{II.2}
\end{equation}%
for any $\tilde{\eta}\in (0,1].$ Thus, among the directions corresponding
to $\theta \in \lbrack 0,2\pi )\smallsetminus \{\pi /2,3\pi /2\}$ only such directions for which $|\cos \theta |>\frac{1}{\tilde{\eta}%
||\mathbf{G}^{T}||_{h}}$ can be
followed in this case. Also, $\tilde{\eta}||\mathbf{G}%
_{MAT}||_{h}>1$ attests that there is not any direction where the resultant
vector $v$ vanishes. Using the results \eqref{II.1}, \eqref{II.2} and $1=$ $%
||u||_{h}=||v+\tilde{\eta}\mathbf{G}_{MAT}||_{h}$, we arrive at the
equation 
\begin{equation*}
||v||_{h}^{2}-2\tilde{\eta}||v||_{h}||\mathbf{G}^{T}||_{h}|\cos \theta |-(1-%
\tilde{\eta}^{2}||\mathbf{G}^{T}||_{h}^{2}\cos ^{2}\theta )=0.
\end{equation*}%
This admits two positive roots%
\begin{equation}
||v||_{h}=\pm 1+\tilde{\eta}||\mathbf{G}^{T}||_{h}|\cos \theta |
\label{II.3}
\end{equation}%
because of the assumed condition $|\cos \theta |>\frac{1}{\tilde{\eta}||%
\mathbf{G}^{T}||_{h}},$ for any$\ \theta \in \lbrack 0,2\pi )\smallsetminus
\{\pi /2,3\pi /2\}.$ Taking into consideration Eq. \eqref{II.1}, Eq. %
\eqref{II.3} can be rewritten as $g_{3}(x,v)=0,$ where $%
g_{3}(x,v)=||v||_{h}^{2}\mp ||v||_{h}+\tilde{\eta}h(v,\mathbf{G}^{T}).$ By
applying again Okubo's method \cite{matsumoto}, the functions $F_{1,2}(x,v)=%
\frac{||v||_{h}^{2}}{\pm ||v||_{h}-\tilde{\eta}h(v,\mathbf{G}^{T})}$ are the
solutions of the equation $g_{3}(x,\frac{v}{F})=0.$ Moreover, the obtained
positive solutions can be extended to an arbitrary nonzero vector $y\in 
\mathcal{A}_{x}^{\ast }=\mathcal{A}^{\ast }\mathcal{\ \cap\  }T_{x}M,$ for any $x\in M,$ where
\begin{equation}
\mathcal{A}^{\ast }=\{(x,y)\in TM\text{ }|\text{ } \ ||y||_{h}+\tilde{%
\eta}h(y,\mathbf{G}^{T})<0\}
\end{equation}
 is an open conic subset of $TM_{0},$ for any $%
\tilde{\eta}\in (0,1].$ Note that $-\mathbf{G}^{T}\in \mathcal{A}_{x}^{\ast
}$ and $c\mathbf{G}^{T}\notin \mathcal{A}_{x}^{\ast }$, where $c>0.$ It turns out the positive homogeneous functions%
\begin{equation}
F_{1,2}(x,y)=\frac{||y||_{h}^{2}}{\pm ||y||_{h}-\tilde{\eta}h(y,\mathbf{G}%
^{T})}  \label{F12}
\end{equation}%
on $\mathcal{A}^{\ast }$, with $F_{1,2}(x,v)=1$. Based on the notations %
\eqref{NOT}, they are of Matsumoto type, i.e. 
\begin{equation}
F_{1,2}(x,y)=\frac{\alpha ^{2}}{\pm \alpha +\tilde{\eta}\bar{g}\beta }.
\label{Fstrong}
\end{equation}%
However, $F_{1,2}$ can give us at most conic Finsler metrics due to their
conic domain $\mathcal{A}^{\ast }$, rewritten as $\mathcal{A}^{\ast
}=\{(x,y)\in TM$ $|$ $\alpha -\tilde{\eta}\bar{g}\beta <0\}$. Applying  
\cite[Corollary 4.15]{JS}, we obtain that both $F_{1,2}$ are strongly convex
on $\mathcal{A}^{\ast }$ and thus, they are conic Finsler metrics on $%
\mathcal{A}^{\ast },$ for any $\tilde{\eta}\in (0,1].$ Indeed, for $F_{1,2}$
the strongly convex conditions $(\alpha \pm 2\tilde{\eta}\bar{g}\beta
)(\alpha \pm \tilde{\eta}\bar{g}\beta )>0$ are satisfied for any $(x,y)\in 
\mathcal{A}^{\ast }$ and $\tilde{\eta}\in (0,1].$

\bigskip\ Consequently, the direction-dependent deformation of the
background Riemannian metric $h$ by the vector field $-\tilde{\eta}\mathbf{G}%
_{MAT}$, restricted to $\tilde{\eta}||\mathbf{G}_{MAT}||_{h}<1$ for every
direction (which is equivalent to $||\mathbf{G}^{T}||_{h}<\frac{1}{\tilde{%
\eta}})$ provides the Finsler metric of Matsumoto type $F(x,y)=\frac{\alpha
^{2}}{\alpha +\tilde{\eta}\bar{g}\beta }$ if and only if $||\mathbf{G}%
^{T}||_{h}<\frac{1}{2\tilde{\eta}},$ for any $\tilde{\eta}\in (0,1]$.

\paragraph{Step II}

Taking into consideration \cite{CJS}, the second step concerns
the fact that the addition of the gravitational wind $\mathbf{G}^{T}$
only generates a rigid translation to the indicatrix provided by $v=u-\tilde{%
\eta}\mathbf{G}_{MAT}$ in the first step. 
 We can discard the case $\tilde{\eta}||\mathbf{G}_{MAT}||_{h}\geq 1$ because this gave us only conic Finsler metrics and thus, going forward in any direction is not possible. Moreover, the translations of the resulting conic Finsler metrics
(from {\it{Cases 2}} and {\it{3}}) may not exist for any $\tilde{\eta}\in (0,1]$. More
precisely, under the condition $F(x,-\mathbf{G}^{T})<1$, the translation of
the conic Finsler 
metric \eqref{KROP} by $\mathbf{G}^{T}$ only exists for $%
\tilde{\eta}\in (\frac{1}{2},1]$ and it is of Randers type. Similarly, the
translations of the conic Finsler metrics \eqref{Fstrong}, restricted to $%
F_{1,2}(x,-\mathbf{G}^{T})<1,$ cannot exist for any $\tilde{\eta}\in (0,1]$.
Indeed, $F_{1}(x,-\mathbf{G}^{T})<1$ can hold only if $\tilde{\eta}\in (%
\frac{1}{2},1]$ and no $\tilde{\eta}\in (0,1]$ exists such that 
$F_{2}(x,-\mathbf{G}^{T})<1.$

We are now going to apply \cref{Prop3}. More precisely, we deal with
Zermelo's navigation problem on the Finsler manifold $(M,F),$ having the
navigation data $(F,\mathbf{G}^{T}),$ where $F$ is either the Finsler metric %
\eqref{Matsumoto_eta} if $\tilde{\eta}\in (0,1]$ or the background
Riemannian metric $h$ if $\tilde{\eta}=0$, under the requirement 
\begin{equation}
F(x,-\mathbf{G}^{T})<1.  \label{CC}
\end{equation}%
The unique positive solution $\tilde{F}$ of the equation%
\begin{equation}
F(x,y-\tilde{F}(x,y)\mathbf{G}^{T})=\tilde{F}(x,y),\text{ }  \label{II}
\end{equation}%
for any $(x,y)\in TM_{0},$ provides the strongly convex slippery-cross-slope
metric which arises as the solution to the above Zermelo's navigation
problem. In order to achieve this aim, we develop Eq. \eqref{II}, writing
the Finsler metric $F$\ as $F(x,y)=\frac{\alpha ^{2}}{\alpha +\tilde{\eta}%
\bar{g}\beta },$\ for any\textbf{\ }$\tilde{\eta}\in \lbrack 0,1].$ Some
standard computations show clearly that 
\begin{equation*}
\alpha ^{2}\left( x,y-\tilde{F}(x,y)\mathbf{G}^{T}\right) =\alpha ^{2}+2\bar{%
g}\beta \tilde{F}+||\mathbf{G}^{T}||_{h}^{2}\tilde{F}^{2}
\end{equation*}%
and%
\begin{equation*}
\beta \left( x,y-\tilde{F}(x,y)\mathbf{G}^{T}\right) =\beta +\frac{1}{\bar{g}%
}||\mathbf{G}^{T}||_{h}^{2}\tilde{F}
\end{equation*}%
because $\beta (x,\mathbf{G}^{T})=-\frac{1}{\bar{g}}||\mathbf{G}%
^{T}||_{h}^{2}$. Consequently, we have 
\begin{equation*}
F(x,y-\tilde{F}(x,y)\mathbf{G}^{T})=\frac{\alpha ^{2}+2\bar{g}\beta \tilde{F}%
+||\mathbf{G}^{T}||_{h}^{2}\tilde{F}^{2}}{\sqrt{\alpha ^{2}+2\bar{g}\beta 
\tilde{F}+||\mathbf{G}^{T}||_{h}^{2}\tilde{F}^{2}}+\tilde{\eta}\bar{g}\beta +%
\tilde{\eta}||\mathbf{G}^{T}||_{h}^{2}\tilde{F}},
\end{equation*}%
where $\alpha $, $\beta $ and $\tilde{F}$ are evaluated at $(x,y)$.
Therefore, Eq. \eqref{II} leads us to the following irrational equation%
\begin{equation}
\tilde{F}\sqrt{\alpha ^{2}+2\bar{g}\beta \tilde{F}+||\mathbf{G}^{T}||_{h}^{2}%
\tilde{F}^{2}}=\alpha ^{2}+(2-\tilde{\eta})\bar{g}\beta \tilde{F}+(1-\tilde{%
\eta})||\mathbf{G}^{T}||_{h}^{2}\tilde{F}^{2},  \label{MAMA_general}
\end{equation}%
for any\textbf{\ }$\tilde{\eta}\in \lbrack 0,1].$

We can select two special cases from Eq. \eqref{MAMA_general}, considering
all values of the parameter $\tilde{\eta}\in \lbrack 0,1]$. One of them is
for $\tilde{\eta}=0$, when the active wind coincides with the gravitational
wind $\mathbf{G}^{T}.$ Although the direction of $\mathbf{G}^{T}$ is fixed,
i.e. the steepest descent, its force is not reduced now and $\mathbf{G}^{T}$
thus behaves as a standard wind which is included in the navigation data in
the Zermelo problem. So, replacing $\tilde{\eta}=0$ in Eq. %
\eqref{MAMA_general}, it results 
\begin{equation}
\tilde{F}\sqrt{\alpha ^{2}+2\bar{g}\beta \tilde{F}+||\mathbf{G}^{T}||_{h}^{2}%
\tilde{F}^{2}}=\alpha ^{2}+2\bar{g}\beta \tilde{F}+||\mathbf{G}^{T}||_{h}^{2}%
\tilde{F}^{2}.  \label{RANDERS_mama}
\end{equation}%
This can be reduced to 
\begin{equation*}
(1-||\mathbf{G}^{T}||_{h}^{2})\tilde{F}^{2}-2\bar{g}\beta \tilde{F}-\alpha
^{2}=0,
\end{equation*}%
because $\alpha ^{2}+2\bar{g}\beta \tilde{F}+||\mathbf{G}^{T}||_{h}^{2}%
\tilde{F}^{2}>0$. The last equation admits the unique positive root%
\begin{equation*}
\tilde{F}(x,y)=\frac{\sqrt{\alpha ^{2}(1-||\mathbf{G}^{T}||_{h}^{2})+\bar{g}%
^{2}\beta ^{2}}+\bar{g}\beta }{1-||\mathbf{G}^{T}||_{h}^{2}},
\end{equation*}%
under the weak gravitational wind, i.e. $||\mathbf{G}^{T}||_{h}<1.$
Actually, this is the Randers metric $\tilde{F}(x,y)=\tilde{\alpha}+\tilde{%
\beta}$ which solves Zermelo's navigation problem in presence of weak
gravitational wind $\mathbf{G}^{T}$, where 
\begin{equation*}
\tilde{\alpha}^{2}=\frac{\alpha ^{2}(1-||\mathbf{G}^{T}||_{h}^{2})+\bar{g}%
^{2}\beta ^{2}}{(1-||\mathbf{G}^{T}||_{h}^{2})^{2}}\text{ \ and \ \ }\tilde{%
\beta}=\frac{\bar{g}\beta }{1-||\mathbf{G}^{T}||_{h}^{2}}.
\end{equation*}

The second edge case is for $\tilde{\eta}=1$, when the active wind coincides
with the cross wind $\mathbf{G}_{\dag }$ and thus, Eq. \eqref{MAMA_general}
is reduced to%
\begin{equation}
||\mathbf{G}^{T}||_{h}^{2}\tilde{F}^{4}+2\bar{g}\beta \tilde{F}^{3}+(\alpha
^{2}-\bar{g}^{2}\beta ^{2})\tilde{F}^{2}-2\bar{g}\alpha ^{2}\beta \tilde{F}%
-\alpha ^{4}=0.  \label{CROSS}
\end{equation}%
The last equation corresponds to the slope-of-a-mountain problem in a cross
gravitational wind (studied recently in \cite{cross}), which is solved by
the cross-slope metric provided by the unique positive root $\tilde{F}$ of
Eq. \eqref{CROSS}, when \ $||\mathbf{G}^{T}||_{h}<\frac{1}{2}$; cf. \cite[%
Theorem 1.1]{cross}.

We now come back to the general case of Eq. \eqref{MAMA_general}, where $%
\tilde{\eta}\in \lbrack 0,1]$. It is equivalent to the polynomial equation
of degree four, namely 
\begin{equation}
\begin{array}{c}
||\mathbf{G}^{T}||_{h}^{2}[1-\left( 1-\tilde{\eta}\right) ^{2}||\mathbf{G}%
^{T}||_{h}^{2}]\tilde{F}^{4}+2\left[ 1-\left( 2-\tilde{\eta}\right) \left( 1-%
\tilde{\eta}\right) ||\mathbf{G}^{T}||_{h}^{2}\right] \bar{g}\beta \tilde{F}%
^{3} \\ 
~ \\ 
+\{\left[ 1-2\left( 1-\tilde{\eta}\right) ||\mathbf{G}^{T}||_{h}^{2}\right]
\alpha ^{2}-\left( 2-\tilde{\eta}\right) ^{2}\bar{g}^{2}\beta ^{2}\}\tilde{F}%
^{2}-2\left( 2-\tilde{\eta}\right) \bar{g}\alpha ^{2}\beta \tilde{F}-\alpha
^{4}=0,%
\end{array}
\label{MAMA_4}
\end{equation}%
which admits four roots if $1-\left( 1-\tilde{\eta}\right) ^{2}||\mathbf{G}%
^{T}||_{h}^{2}\neq 0$. However, taking into consideration the condition %
\eqref{CC}, (see \cite[p. 10 and Proposition 2.14]{CJS}), it results that
there is a unique positive root, i.e. the slippery-cross-slope metric.
Subsequently, it is denoted by $\tilde{F}_{\tilde{\eta}}$ and it satisfies
Eq. \eqref{MAMA_general}, for each $\tilde{\eta}\in \lbrack 0,1]$. Notice
that along any regular piecewise $C^{\infty }$-curve $\gamma,$ parametrized
by time that represents a trajectory in Zermelo's problem, $\tilde{F}(\gamma
(t),\dot{\gamma}(t))=1,$ i.e. the time in which a craft or a vehicle goes
along it.

Finally, we handle the condition \eqref{CC} which secures that the
indicatrix of $\tilde{F}_{\tilde{\eta}}$ is strongly convex and thus, the
geodesics of $\tilde{F}_{\tilde{\eta}}$ locally minimize time.

\begin{lemma}
\label{Lema3} The following statements are equivalent:

\begin{itemize}
\item[i)] the indicatrix $I_{\tilde{F}_{\tilde{\eta}}}$ of the
slippery-cross-slope metric $\tilde{F}_{\tilde{\eta}}$ is strongly convex;

\item[ii)] the gravitational wind $\mathbf{G}^{T}$ is restricted to either $%
||\mathbf{G}^{T}||_{h}<\frac{1}{1-\tilde{\eta}}$ and $\tilde{\eta}\in
\lbrack 0,\frac{1}{3}]$, or $||\mathbf{G}^{T}||_{h}<\frac{1}{2\tilde{\eta}}$
and $\tilde{\eta}\in (\frac{1}{3},1];$

\item[iii)] the active wind $\mathbf{G}_{\tilde{\eta}}$ given by Eq. %
\eqref{nn} is restricted to either $||\mathbf{G}_{\tilde{\eta}}||_{h}<\frac{1%
}{1-\tilde{\eta}}$ and $\tilde{\eta}\in \lbrack 0,\frac{1}{3}]$, or $||%
\mathbf{G}_{\tilde{\eta}}||_{h}<\frac{1}{2\tilde{\eta}}$ and $\ \tilde{\eta}%
\in (\frac{1}{3},1].$
\end{itemize}
\end{lemma}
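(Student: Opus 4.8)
The plan is to read off the strong convexity of $\tilde{F}_{\tilde{\eta}}$ directly from the two-stage construction in Step~I and Step~II, rather than attempting to differentiate the quartic \eqref{MAMA_4}. By construction $\tilde{F}_{\tilde{\eta}}$ solves the Zermelo navigation problem with data $(F,\mathbf{G}^{T})$, where $F$ is the Matsumoto-type metric \eqref{Matsumoto_eta} for $\tilde{\eta}\in(0,1]$ (and $F=h$ for $\tilde{\eta}=0$). \Cref{Prop3} tells us that $\tilde{F}_{\tilde{\eta}}$ is a strongly convex Finsler metric precisely when two requirements hold together: (a) $F$ is itself a Finsler metric, and (b) the sharp navigation condition $F(x,-\mathbf{G}^{T})<1$ of \eqref{CC}. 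So my first goal is to translate (a) and (b) into inequalities on $||\mathbf{G}^{T}||_{h}$ and then intersect them.

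Condition (a) is exactly \Cref{Lema2}: for $\tilde{\eta}\in(0,1]$ it is equivalent to $||\mathbf{G}^{T}||_{h}<\frac{1}{2\tilde{\eta}}$, and for $\tilde{\eta}=0$ it is automatic since $F=h$. For condition (b) I would evaluate $F(x,-\mathbf{G}^{T})$ explicitly. Using $\alpha(-\mathbf{G}^{T})=||\mathbf{G}^{T}||_{h}$ and $\beta(-\mathbf{G}^{T})=\frac{1}{\bar{g}}||\mathbf{G}^{T}||_{h}^{2}$ from \eqref{NOT}, a short substitution into \eqref{Matsumoto_eta} gives $F(x,-\mathbf{G}^{T})=\frac{||\mathbf{G}^{T}||_{h}}{1+\tilde{\eta}||\mathbf{G}^{T}||_{h}}$, so \eqref{CC} becomes $(1-\tilde{\eta})||\mathbf{G}^{T}||_{h}<1$, i.e. $||\mathbf{G}^{T}||_{h}<\frac{1}{1-\tilde{\eta}}$ for $\tilde{\eta}\in[0,1)$ and vacuous for $\tilde{\eta}=1$. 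Intersecting (a) and (b) amounts to $||\mathbf{G}^{T}||_{h}<\min\{\frac{1}{1-\tilde{\eta}},\frac{1}{2\tilde{\eta}}\}$. Comparing the two bounds, $\frac{1}{1-\tilde{\eta}}\le\frac{1}{2\tilde{\eta}}\Leftrightarrow 3\tilde{\eta}\le 1$, which produces exactly the dichotomy of ii): the bound $\frac{1}{1-\tilde{\eta}}$ governs on $[0,\frac{1}{3}]$ and $\frac{1}{2\tilde{\eta}}$ on $(\frac{1}{3},1]$. This yields i)\,$\Leftrightarrow$\,ii).

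For ii)\,$\Leftrightarrow$\,iii) the plan is to relate the magnitude of the active wind to that of the gravitational wind. Since $\mathbf{G}_{\tilde{\eta}}=\mathbf{G}_{MAT}^{\perp}+(1-\tilde{\eta})\mathbf{G}_{MAT}$ is an orthogonal decomposition with $||\mathbf{G}_{MAT}||_{h}=||\mathbf{G}^{T}||_{h}|\cos\theta|$ and $||\mathbf{G}_{MAT}^{\perp}||_{h}=||\mathbf{G}^{T}||_{h}|\sin\theta|$, the Pythagorean identity gives $||\mathbf{G}_{\tilde{\eta}}||_{h}^{2}=||\mathbf{G}^{T}||_{h}^{2}\big(1-\tilde{\eta}(2-\tilde{\eta})\cos^{2}\theta\big)$. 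As $\tilde{\eta}(2-\tilde{\eta})\ge 0$ on $[0,1]$, this is maximised at $\theta=\frac{\pi}{2}$ with value $||\mathbf{G}^{T}||_{h}^{2}$, and the supremum is attained. Hence imposing the stated bound on $||\mathbf{G}_{\tilde{\eta}}||_{h}$ for every direction is equivalent to the same bound on $||\mathbf{G}^{T}||_{h}$, which is precisely the passage between iii) and ii).

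The main obstacle is the necessity half of i)\,$\Rightarrow$\,ii), namely arguing that strong convexity genuinely fails once either bound is violated, not merely that the bounds are sufficient. Here I would lean on the sharpness built into \Cref{Prop3} and \cite{CJS}: if $||\mathbf{G}^{T}||_{h}\ge\frac{1}{2\tilde{\eta}}$ then $F$ degenerates to at most the conic metrics of Cases~2 and~3, whose $\mathbf{G}^{T}$-translations (as already observed in Step~II) cannot be globally defined Finsler metrics; whereas if $F$ is Finsler but $F(x,-\mathbf{G}^{T})\ge 1$, the equality/excess regime of the navigation equation \eqref{II} fails to produce a strongly convex indicatrix on all of $TM_{0}$. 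Care is also needed at the endpoints $\tilde{\eta}\in\{0,1\}$, where one of the two bounds becomes vacuous and must be checked against the limiting Randers and cross-slope cases recorded in \Cref{Theorem1}.
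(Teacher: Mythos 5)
Your proposal is correct and follows essentially the same route as the paper's proof: you evaluate $F(x,-\mathbf{G}^{T})=\frac{||\mathbf{G}^{T}||_{h}}{1+\tilde{\eta}||\mathbf{G}^{T}||_{h}}$ so that \eqref{CC} becomes $(1-\tilde{\eta})||\mathbf{G}^{T}||_{h}<1$, intersect this with the condition $||\mathbf{G}^{T}||_{h}<\frac{1}{2\tilde{\eta}}$ of \cref{Lema2} to obtain the dichotomy at $\tilde{\eta}=\frac{1}{3}$, and for ii)\,$\Leftrightarrow$\,iii) use that the supremum of $||\mathbf{G}_{\tilde{\eta}}||_{h}$ over directions equals $||\mathbf{G}^{T}||_{h}$ (attained where $\mathbf{G}_{MAT}=0$), exactly as the paper observes. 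Your explicit Pythagorean identity $||\mathbf{G}_{\tilde{\eta}}||_{h}^{2}=||\mathbf{G}^{T}||_{h}^{2}\bigl(1-\tilde{\eta}(2-\tilde{\eta})\cos^{2}\theta\bigr)$ merely spells out the paper's one-line argument, and your reliance on the sharpness of \cref{Prop3} and \cite{CJS} for the necessity direction matches the paper's own level of detail.
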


\begin{proof}
i) $\Leftrightarrow $ ii) Since $F(x,y)=\frac{\alpha ^{2}}{\alpha +\tilde{%
\eta}\bar{g}\beta }=\frac{||y||_{h}^{2}}{||y||_{h}-\tilde{\eta}h(y,\mathbf{G}%
^{T})},$\ it turns out that, for any\textbf{\ }$\tilde{\eta}\in \lbrack
0,1], $ we have 
\begin{equation*}
F(x,-\mathbf{G}^{T})=\frac{||-\mathbf{G}^{T}||_{h}^{2}}{||-\mathbf{G}%
^{T}||_{h}-\tilde{\eta}h(-\mathbf{G}^{T},\mathbf{G}^{T})}=\frac{||\mathbf{G}%
^{T}||_{h}}{1+\tilde{\eta}||\mathbf{G}^{T}||_{h}}.
\end{equation*}%
Therefore, the requirement \eqref{CC} is equivalent to%
\begin{equation}
(1-\tilde{\eta})||\mathbf{G}^{T}||_{h}<1,  \label{CC_SC}
\end{equation}%
for any $\tilde{\eta}\in \lbrack 0,1]$. On account of \eqref{CC_SC}, we
distinguish three cases:

a) if $\tilde{\eta}=1,$ then the sharp inequality \eqref{CC_SC} holds. Thus,
the strong convexity of the Matsumoto type metric $F$ (Eq. %
\eqref{Matsumoto_eta} with $\tilde{\eta}=1),$ certified by \cref{Lema2},
ensures the strong convexity of the cross-slope metric $\tilde{F}_{\tilde{%
\eta}=1}$, namely $||\mathbf{G}^{T}||_{h}<\frac{1}{2}$ (see also \cite[%
Theorem 1.1]{cross});

b) if $\tilde{\eta}\in (0,1),$ then $||\mathbf{G}^{T}||_{h}<\frac{1}{1-%
\tilde{\eta}}.$ If we combine the last inequality with the strong convexity
condition for the indicatrix $I_{F}$ (more precisely, for any $\tilde{\eta}%
\in (0,1),$ $||\mathbf{G}^{T}||_{h}<\frac{1}{2\tilde{\eta}}$ by \cref{Lema2}%
), we obtain that the indicatrix $I_{\tilde{F}}$ is strongly convex if and
only if either $||\mathbf{G}^{T}||_{h}<\frac{1}{1-\tilde{\eta}} $ and $%
\tilde{\eta}\in (0,\frac{1}{3}]$, or $||\mathbf{G}^{T}||_{h}<\frac{1}{2%
\tilde{\eta}}$ and $\tilde{\eta}\in (\frac{1}{3},1)$;

c) if $\tilde{\eta}=0,$ then $F=h$ and the inequality \eqref{CC_SC} yields $%
||\mathbf{G}^{T}||_{h}<1,$ i.e. the strong convexity condition for the
Randers metric $\tilde{F}_{\tilde{\eta}=0}.$

One can summarize that the sharp inequality \eqref{CC} is equivalent to
either $||\mathbf{G}^{T}||_{h}<\frac{1}{1-\tilde{\eta}}$ and $\tilde{\eta}%
\in \lbrack 0,\frac{1}{3}]$, or $||\mathbf{G}^{T}||_{h}<\frac{1}{2\tilde{\eta%
}}$ and $\tilde{\eta}\in (\frac{1}{3},1]$.

The main key to prove that ii) is equivalent to iii) is the observation that 
$||\mathbf{G}_{\tilde{\eta}}||_{h}\leq ||\mathbf{G}^{T}||_{h},$ for any $%
\tilde{\eta}\in \lbrack 0,1]$ and, furthermore, the maximum of $||\mathbf{G}%
_{\tilde{\eta}}||_{h}$ coincides with $||\mathbf{G}^{T}||_{h}$, since $%
\mathbf{G}_{MAT}$ must vanish for some direction.
\end{proof}

Notice that by \cref{Lema3}, the force of the active wind $\mathbf{G}_{\tilde{\eta}}$ can be described in terms of the gravitational wind $\mathbf{G}^{T}$, i.e. $||\mathbf{G}^{T}||_{h}<\tilde{b}_{0}$, in the slippery-cross-slope problem (\cref{fig_quasi_plane_convexity}), where
\begin{equation}
\ \tilde{b}_{0}=\left\{ 
\begin{array}{cc}
\frac{1}{1-\tilde{\eta}}, & \text{if }\tilde{\eta}\in \lbrack 0,\frac{1}{3}]
\\ 
\frac{1}{2\tilde{\eta}}, & \text{if \ }\tilde{\eta}\in (\frac{1}{3},1]%
\end{array}%
.\right.  \label{Strong_C}
\end{equation}%
Moreover, since $\tilde{b}_{0}\in (1,\frac{3}{2}]$ if $\tilde{\eta}\in (0,%
\frac{1}{2}),$ we outline that the slippery-cross-slope metric $\tilde{F}_{\tilde{\eta}}$ can be still strongly convex even if $||\mathbf{G}^{T}||_{h}>1$, unlike all other navigation problems listed in \Cref{Intro}. Namely, $||\mathbf{G}^{T}||_{h}<\tilde{b}_{0}\leq\frac{3}{2}$ for $\tilde{\eta}\in (0,\frac{1}{2})$ (see \cref{fig_quasi_plane_convexity} as well as \Cref{fig_qs_plane_indix} on further reading).   

The results obtained in steps I and II carry out the proof of \cref{Theorem1}.

\begin{figure}[h!]
\centering
~\includegraphics[width=0.60\textwidth]{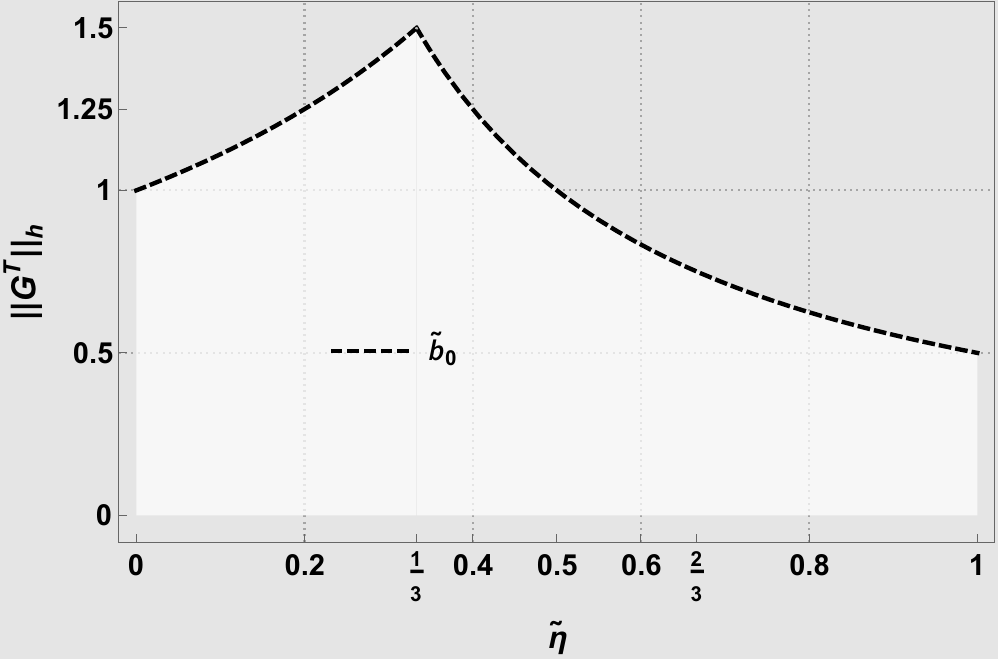} 
\caption{The supremum of the allowable (boundary) force of the gravitational wind due to the convexity restriction, i.e. $||\mathbf{G}^{T}||_{h}<\tilde{b}_{0}$, where $\tilde{b}_{0}$ is given by \eqref{Strong_C}. If $||\mathbf{G}^{T}||_{h}<0.5$, then $\tilde{F}_{\tilde{\eta}}$ is strongly convex for any $\tilde{\eta}\in [0, 1]$.}
\label{fig_quasi_plane_convexity}
\end{figure}

\section{Proof of \cref{Theorem2}}
\label{Sec_4}

The proof of \cref{Theorem2} comprises some technical
computations which aim to reach the spray coefficients related to the
slippery-cross-slope metric $\tilde{F}_{\tilde{\eta}}$. Once this is done, we
can immediately supply the equations of the time geodesics. According to the
previous section the metric $\tilde{F}_{\tilde{\eta}} $ is the unique positive root of Eq. \eqref{MAMA_4}, for each $\tilde{%
\eta}\in \lbrack 0,1]$, and a candidate for a general $(\alpha ,\beta )$%
-metric. Indeed, dividing Eq. \eqref{MAMA_4} by $\alpha ^{4}$ and making use
of the notations $\tilde{\phi}=\frac{\tilde{F}}{\alpha }$ and $s=\frac{\beta 
}{\alpha}$, we obtain an equivalent equation, that is  
\begin{equation}
\begin{array}{c}
||\mathbf{G}^{T}||_{h}^{2}[1-\left( 1-\tilde{\eta}\right) ^{2}||\mathbf{G}%
^{T}||_{h}^{2}]\tilde{\phi}^{4}+2\left[ 1-\left( 2-\tilde{\eta}\right)
\left( 1-\tilde{\eta}\right) ||\mathbf{G}^{T}||_{h}^{2}\right] \bar{g}s%
\tilde{\phi}^{3} \\ 
~ \\ 
+[1-2\left( 1-\tilde{\eta}\right) ||\mathbf{G}^{T}||_{h}^{2}-\left( 2-\tilde{%
\eta}\right) ^{2}\bar{g}^{2}s^{2}]\tilde{\phi}^{2}-2\left( 2-\tilde{\eta}%
\right) \bar{g}s\tilde{\phi}-1=0.%
\end{array}
\label{PHI}
\end{equation}%
This also admits a sole positive root denoted by $\tilde{\phi}_{\tilde{\eta}%
},$ for each $\tilde{\eta}\in \lbrack 0,1]$, which depends on the variables $%
||\mathbf{G}^{T}||_{h}$ and $s,$ i.e. $\tilde{\phi}_{\tilde{\eta}}=\tilde{%
\phi}_{\tilde{\eta}}(||\mathbf{G}^{T}||_{h}^{2},s),$ where $\tilde{\eta}$
plays only as a parameter. Thus, the slippery-cross-slope metric $\tilde{F}_{%
\tilde{\eta}}$ is a general $(\alpha ,\beta )$-metric, $\tilde{F}_{\tilde{%
\eta}}(x,y)=\alpha \tilde{\phi}_{\tilde{\eta}}(||\mathbf{G}%
^{T}||_{h}^{2},s), $ where $\ \bar{g}^{2}b^{2}=||\mathbf{G}^{T}||_{h}^{2}$
and $\tilde{\phi}_{\tilde{\eta}}$ is a positive $C^{\infty }$-function as
well as $\alpha $ and $\beta $ are given by \eqref{NOT}. Due to strong convexity of $\tilde{F}_{\tilde{\eta}}$ established by \cref{Theorem1} (namely, $||\mathbf{G}^{T}||_{h}<\tilde{b}_{0},$ where $\tilde{b}_{0}$ given
by \eqref{Strong_C}), if we apply the direct implication of \cref{Prop1}, we
can state that the function $\tilde{\phi}_{\tilde{\eta}}$ satisfies the
following inequalities%
\begin{equation*}
\tilde{\phi}_{\tilde{\eta}}-s\tilde{\phi}_{\tilde{\eta}2}>0,\qquad \bar{g}%
^{2}(\tilde{\phi}_{\tilde{\eta}}-s\tilde{\phi}_{\tilde{\eta}2})+(||\mathbf{G}%
^{T}||_{h}^{2}-\bar{g}^{2}s^{2})\tilde{\phi}_{\tilde{\eta}22}>0,
\end{equation*}%
when $n\geq 3$ (or only the right-hand side inequality, when $n=2)$, for any $s$ such that $%
|s|\leq \frac{||\mathbf{G}^{T}||_{h}}{\bar{g}}<\frac{\tilde{b}_{0}}{\bar{g}}$%
. Moreover, by applying \cref{Prop2} to the general $(\alpha ,\beta )$%
-metric $\tilde{F}_{\tilde{\eta}}$ we will provide its spray coefficients,
for each $\tilde{\eta}\in \lbrack 0,1].$

We now work toward the establishment of some relations between the function $%
\tilde{\phi}_{\tilde{\eta}}$ and its derivatives.

\begin{lemma}
\label{Lema4} Let $M$ be an $n$-dimensional manifold, $n>1,$ with the
slippery-cross-slope metric $\tilde{F}_{\tilde{\eta}}=\alpha \tilde{\phi}%
_{\tilde{\eta}}(||\mathbf{G}^{T}||_{h}^{2},s).$ The function $\tilde{\phi}_{%
\tilde{\eta}}$ and its derivative with respect to $s,$ i.e. $\tilde{\phi}_{%
\tilde{\eta}2}$ hold the following relations%
\begin{equation}
C\tilde{\phi}_{\tilde{\eta}2}=\bar{g}A\tilde{\phi}_{\tilde{\eta}},\qquad C(%
\tilde{\phi}_{\tilde{\eta}}-s\tilde{\phi}_{\tilde{\eta}2})=B, \qquad 
C\tilde{\phi}_{\tilde{\eta}}=B+\bar{g}sA\tilde{\phi}_{\tilde{\eta}},\qquad
\left( 2-\tilde{\eta}\right) B-2A=\tilde{\eta}\tilde{\phi}_{\eta }^{2},%
\label{3.1}
\end{equation}%
for each $\tilde{\eta}\in \lbrack 0,1],$ where%
\begin{equation}
\begin{array}{l}
A=-\left[ 1-\left( 2-\tilde{\eta}\right) \left( 1-\tilde{\eta}\right) ||%
\mathbf{G}^{T}||_{h}^{2}\right] \tilde{\phi}_{\tilde{\eta}}^{2}+(2-\tilde{%
\eta})^{2}\bar{g}s\tilde{\phi}_{\tilde{\eta}}+2-\tilde{\eta}, \\ 
~ \\ 
B=-\left[1-2(1-\tilde{\eta})||\mathbf{G}^{T}||_{h}^{2}\right]\tilde{\phi}_{\tilde{\eta}%
}^{2}+2(2-\tilde{\eta})\bar{g}s\tilde{\phi}_{\tilde{\eta}}+2, \\ 
~ \\ 
\begin{split}
C=& \ 2||\mathbf{G}^{T}||_{h}^{2}\left[1-\left( 1-\tilde{\eta}\right) ^{2}||%
\mathbf{G}^{T}||_{h}^{2}\right]\tilde{\phi}_{\tilde{\eta}}^{3}+3\left[ 1-\left( 2-%
\tilde{\eta}\right) \left( 1-\tilde{\eta}\right) ||\mathbf{G}^{T}||_{h}^{2}%
\right] \bar{g}s\tilde{\phi}_{\tilde{\eta}}^{2} \\
& +\{\left[ 1-2\left( 1-\tilde{\eta}\right) ||\mathbf{G}^{T}||_{h}^{2}\right]
-\left(2-\tilde{\eta}\right) ^{2}\bar{g}^{2}s^{2}\}\tilde{\phi}_{\tilde{\eta}}-\left( 2-\tilde{\eta}\right) \bar{g}s.
\end{split}%
\end{array}
\label{3.2}
\end{equation}%
Moreover,

\begin{itemize}
\item[i)] $C\neq 0$ for any $\tilde{\eta}\in \lbrack 0,1]$ and $s$ such that 
$|s|\leq \frac{||\mathbf{G}^{T}||_{h}}{\bar{g}}<\frac{\tilde{b}_{0}}{\bar{g}}
$ and,%
\begin{equation}
\tilde{\phi}_{\tilde{\eta}2}=\frac{\bar{g}A}{C}\tilde{\phi}_{\tilde{\eta}%
},\quad \tilde{\phi}_{\tilde{\eta}}-s\tilde{\phi}_{\tilde{\eta}2}=\frac{B}{C}%
,\text{ \ \ }\tilde{\phi}_{\tilde{\eta}22}=\frac{\bar{g}^{2}}{C^{3}}(A^{2}B+%
\tilde{\eta}^{2}\tilde{\phi}_{\tilde{\eta}}^{4}).  \label{33}
\end{equation}

\item[ii)] $B\neq 0$ for any $\tilde{\eta}\in \lbrack 0,1]$ and $s$ such
that $|s|\leq \frac{||\mathbf{G}^{T}||_{h}}{\bar{g}}<\frac{\tilde{b}_{0}}{%
\bar{g}}.$
\end{itemize}
\end{lemma}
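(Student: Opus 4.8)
The plan is to regard $\tilde{\phi}_{\tilde{\eta}}$ as the implicit function determined by the quartic $P(\tilde{\phi},s)=0$ of Eq.~\eqref{PHI} and to read the four identities in \eqref{3.1} directly off the structure of $P$. First I would note that the polynomial $C$ in \eqref{3.2} is precisely $\tfrac{1}{2}\,\partial P/\partial\tilde{\phi}$, while a short differentiation gives $\partial P/\partial s=-2\bar{g}A\tilde{\phi}_{\tilde{\eta}}$ with $A$ as in \eqref{3.2}. Implicit differentiation of $P(\tilde{\phi}_{\tilde{\eta}},s)=0$, namely $\tfrac{\partial P}{\partial\tilde{\phi}}\,\tilde{\phi}_{\tilde{\eta}2}+\tfrac{\partial P}{\partial s}=0$, then delivers the first relation $C\tilde{\phi}_{\tilde{\eta}2}=\bar{g}A\tilde{\phi}_{\tilde{\eta}}$ immediately.

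Next I would establish the two algebraic relations. The identity $(2-\tilde{\eta})B-2A=\tilde{\eta}\tilde{\phi}_{\tilde{\eta}}^{2}$ holds coefficientwise in $\tilde{\phi}_{\tilde{\eta}}$ and needs no use of $P=0$; it follows by expanding $A$ and $B$ from \eqref{3.2} and collecting powers. For $C\tilde{\phi}_{\tilde{\eta}}=B+\bar{g}sA\tilde{\phi}_{\tilde{\eta}}$ I would use the Euler-type combination $\tilde{\phi}_{\tilde{\eta}}\,\partial P/\partial\tilde{\phi}-4P$, which after imposing $P=0$ reduces $2C\tilde{\phi}_{\tilde{\eta}}$ to a cubic in $\tilde{\phi}_{\tilde{\eta}}$ that is then seen to equal $2B+2\bar{g}sA\tilde{\phi}_{\tilde{\eta}}$. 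Equivalently, dividing \eqref{MAMA_general} by $\alpha^{2}$ gives $\tilde{\phi}_{\tilde{\eta}}\sqrt{N}=1+(2-\tilde{\eta})\bar{g}s\tilde{\phi}_{\tilde{\eta}}+(1-\tilde{\eta})\|\mathbf{G}^{T}\|_{h}^{2}\tilde{\phi}_{\tilde{\eta}}^{2}$ with $N=1+2\bar{g}s\tilde{\phi}_{\tilde{\eta}}+\|\mathbf{G}^{T}\|_{h}^{2}\tilde{\phi}_{\tilde{\eta}}^{2}$, from which one reads the compact forms $A=\tilde{\phi}_{\tilde{\eta}}[(2-\tilde{\eta})\sqrt{N}-\tilde{\phi}_{\tilde{\eta}}]$ and $B=\tilde{\phi}_{\tilde{\eta}}[2\sqrt{N}-\tilde{\phi}_{\tilde{\eta}}]$, making both identities transparent. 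The remaining relation $C(\tilde{\phi}_{\tilde{\eta}}-s\tilde{\phi}_{\tilde{\eta}2})=B$ then follows by subtracting $s$ times the first relation from the third.

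For the non-vanishing claims I would work with $B=\tilde{\phi}_{\tilde{\eta}}\bigl(2\sqrt{N}-\tilde{\phi}_{\tilde{\eta}}\bigr)$. Since $\tilde{\phi}_{\tilde{\eta}}>0$, proving $B>0$ (hence $B\neq0$, statement ii)) amounts to $\tilde{\phi}_{\tilde{\eta}}<2\sqrt{N}$. Writing $z=y-\tilde{F}_{\tilde{\eta}}\mathbf{G}^{T}$ and using $F(x,z)=\tilde{F}_{\tilde{\eta}}$ from \cref{Prop3} together with $F=\alpha^{2}/(\alpha+\tilde{\eta}\bar{g}\beta)$, one gets $\tilde{\phi}_{\tilde{\eta}}/\sqrt{N}=F(x,z)/\alpha(z)=(1+\tilde{\eta}\bar{g}s_{z})^{-1}$ with $s_{z}=\beta(z)/\alpha(z)$; Cauchy--Schwarz gives $|s_{z}|\le\|\mathbf{G}^{T}\|_{h}/\bar{g}$, and for $\tilde{\eta}\in(0,1]$ the convexity restriction $\|\mathbf{G}^{T}\|_{h}<\tilde{b}_{0}\le\tfrac{1}{2\tilde{\eta}}$ (see \eqref{Strong_C}) forces $\tilde{\eta}\bar{g}s_{z}>-\tfrac{1}{2}$, whence $(1+\tilde{\eta}\bar{g}s_{z})^{-1}<2$ and $B>0$. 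The case $\tilde{\eta}=0$ is identical with $F=h$, giving $B=\tilde{\phi}_{\tilde{\eta}}^{2}>0$. Then $C\neq0$ follows from the already-proved relation $B=C(\tilde{\phi}_{\tilde{\eta}}-s\tilde{\phi}_{\tilde{\eta}2})$: were $C=0$ we would get $B=0$, a contradiction.

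Finally, the first two formulas in \eqref{33} are obtained by dividing the first and fourth relations of \eqref{3.1} by $C\neq0$. For $\tilde{\phi}_{\tilde{\eta}22}$ I would differentiate $C\tilde{\phi}_{\tilde{\eta}2}=\bar{g}A\tilde{\phi}_{\tilde{\eta}}$ once more in $s$, substitute $\tilde{\phi}_{\tilde{\eta}2}=\bar{g}A\tilde{\phi}_{\tilde{\eta}}/C$ wherever it appears, and reduce the outcome modulo $P=0$ using the identities \eqref{3.1}, the term $\tilde{\eta}^{2}\tilde{\phi}_{\tilde{\eta}}^{4}$ in the target being nothing but $[(2-\tilde{\eta})B-2A]^{2}$ through the fourth relation. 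I expect this last simplification to be the main obstacle: the total $s$-derivatives of $A$ and $C$ are lengthy, and collapsing them to the compact quotient $\tfrac{\bar{g}^{2}}{C^{3}}(A^{2}B+\tilde{\eta}^{2}\tilde{\phi}_{\tilde{\eta}}^{4})$ demands careful, repeated use of both the polynomial constraint and the algebraic identities rather than any new idea.
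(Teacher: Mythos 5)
Your proposal is correct, and on the substantive non-vanishing claims it takes a genuinely different route from the paper. For the identities \eqref{3.1} and \eqref{33} the two arguments essentially coincide: like the paper, you differentiate the quartic identity \eqref{PHI} implicitly in $s$, and your explicit observations that $C=\tfrac{1}{2}\,\partial P/\partial \tilde{\phi}$, that $\partial P/\partial s=-2\bar{g}A\tilde{\phi}_{\tilde{\eta}}$, and that the fourth relation $(2-\tilde{\eta})B-2A=\tilde{\eta}\tilde{\phi}_{\tilde{\eta}}^{2}$ holds coefficientwise \emph{without} invoking $P=0$ (only the third relation needs it, e.g.\ via your Euler combination $\tilde{\phi}_{\tilde{\eta}}\,\partial P/\partial\tilde{\phi}-4P$) are accurate sharpenings of the paper's terser ``by using the notations \eqref{3.2} and Eq.\ \eqref{3.3}''. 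The divergence is in i) and ii): the paper proves $C\neq 0$ first, by contradiction ($C(s_{0})=0$ forces $A=B=\tilde{\eta}\tilde{\phi}_{\tilde{\eta}}^{2}=0$, hence $\tilde{\eta}=0$, and then \eqref{3.3} collapses to the impossible sum of positive terms \eqref{3.4}), and then establishes $B\neq 0$ by a separate, longer contradiction argument that solves for the hypothetical root $\tilde{s}$ and splits into the two cases $1-2(1-\tilde{\eta})\|\mathbf{G}^{T}\|_{h}^{2}\neq 0$ and $=0$. You reverse the order and prove the \emph{stronger} statement $B>0$ directly: from the compact forms $B=\tilde{\phi}_{\tilde{\eta}}\bigl(2\sqrt{N}-\tilde{\phi}_{\tilde{\eta}}\bigr)$ with $N=1+2\bar{g}s\tilde{\phi}_{\tilde{\eta}}+\|\mathbf{G}^{T}\|_{h}^{2}\tilde{\phi}_{\tilde{\eta}}^{2}$, the navigation relation of \cref{Prop3} gives $\tilde{\phi}_{\tilde{\eta}}/\sqrt{N}=(1+\tilde{\eta}\bar{g}s_{z})^{-1}$, and Cauchy--Schwarz combined with $\tilde{b}_{0}\leq\tfrac{1}{2\tilde{\eta}}$ (valid on all of $(0,1]$ by \eqref{Strong_C}, with equality at $\tilde{\eta}=\tfrac{1}{3}$) yields $\tilde{\eta}\bar{g}s_{z}>-\tfrac{1}{2}$, hence $\tilde{\phi}_{\tilde{\eta}}<2\sqrt{N}$; then $C\neq 0$ follows at once from $C(\tilde{\phi}_{\tilde{\eta}}-s\tilde{\phi}_{\tilde{\eta}2})=B$, since those identities hold pointwise. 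Your route is shorter, avoids the paper's case analysis entirely, and delivers a sign for $B$ rather than mere non-vanishing; the paper's argument, in exchange, stays wholly inside the polynomial identity and never re-invokes the Zermelo equation. One bookkeeping remark on your final step: the collapse to $\tilde{\phi}_{\tilde{\eta}22}=\tfrac{\bar{g}^{2}}{C^{3}}(A^{2}B+\tilde{\eta}^{2}\tilde{\phi}_{\tilde{\eta}}^{4})$ indeed works as you predict, but the fourth identity enters \emph{linearly} — the surviving $\tilde{\eta}$-terms reduce, after substituting $C\tilde{\phi}_{\tilde{\eta}}=B+\bar{g}sA\tilde{\phi}_{\tilde{\eta}}$ and $H=2+\bar{g}s\tilde{\phi}_{\tilde{\eta}}$, to $(2-\tilde{\eta})B-2A=\tilde{\eta}\tilde{\phi}_{\tilde{\eta}}^{2}$ multiplied by $\tilde{\eta}\tilde{\phi}_{\tilde{\eta}}^{2}$ — not as the square $[(2-\tilde{\eta})B-2A]^{2}$; the identity you cite is true, so this is a harmless heuristic slip rather than a gap.
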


\begin{proof}
We take into account the fact that $\tilde{\phi}_{\tilde{\eta}}$ checks
identically Eq. \eqref{PHI}, for any $\tilde{\eta}\in \lbrack 0,1]$ and
thus, we get the following identity%
\begin{equation}
\begin{array}{c}
||\mathbf{G}^{T}||_{h}^{2}[1-\left( 1-\tilde{\eta}\right) ^{2}||\mathbf{G}%
^{T}||_{h}^{2}]\tilde{\phi}_{\tilde{\eta}}^{4}+2\left[ 1-\left( 2-\tilde{\eta%
}\right) \left( 1-\tilde{\eta}\right) ||\mathbf{G}^{T}||_{h}^{2}\right] \bar{%
g}s\tilde{\phi}_{\tilde{\eta}}^{3} \\ 
~ \\ 
+[1-2\left( 1-\tilde{\eta}\right) ||\mathbf{G}^{T}||_{h}^{2}-\left( 2-\tilde{%
\eta}\right) ^{2}\bar{g}^{2}s^{2}]\tilde{\phi}_{\tilde{\eta}}^{2}-2\left( 2-%
\tilde{\eta}\right) \bar{g}s\tilde{\phi}_{\tilde{\eta}}-1=0.%
\end{array}
\label{3.3}
\end{equation}%
The first relation in \eqref{3.1} is obtained by differentiating the identity %
\eqref{3.3} with respect to $s$. Based on this, we then get the second
identity in \eqref{3.1}. By using the notations \eqref{3.2} and Eq. %
\eqref{3.3}, one can prove the last two relations in \eqref{3.1}.

Now in order to prove i) we suppose, towards a contradiction, that there
exists $s_{0}\in \lbrack -b,b],$ $b=\frac{||\mathbf{G}^{T}||_{h}}{\bar{g}}<%
\frac{\tilde{b}_{0}}{\bar{g}}$, with $\tilde{b}_{0}$ given by %
\eqref{Strong_C}, such that $C(||\mathbf{G}^{T}||_{h}^{2},s_{0})=0.$ Under
this assumption, the relations \eqref{3.1} imply that $A(||\mathbf{G}%
^{T}||_{h}^{2},s_{0})=B(||\mathbf{G}^{T}||_{h}^{2},s_{0})=\tilde{\eta}\tilde{%
\phi}_{\tilde{\eta}}^{2}(||\mathbf{G}^{T}||_{h}^{2},s_{0})=0$ and thus,
the sole possibility is $\tilde{\eta}=0,$ because $\tilde{\phi}_{\tilde{\eta}}(||%
\mathbf{G}^{T}||_{h}^{2},s_{0})>0,$ for any $\tilde{\eta}\in \lbrack 0,1].$
By substituting $\tilde{\eta}=0$ together with the above consequences of our
assumption in the identity \eqref{3.3}, it results%
\begin{equation}
||\mathbf{G}^{T}||_{h}^{2}(1-||\mathbf{G}^{T}||_{h}^{2})\tilde{\phi}%
_{0}^{4}+[2\bar{g}s_{0}\tilde{\phi}_{0}+1]^{2}=0.  \label{3.4}
\end{equation}%
This contradicts the fact that $||\mathbf{G}^{T}||_{h}^{2}(1-||\mathbf{G}%
^{T}||_{h}^{2})\tilde{\phi}_{0}^{4}(||\mathbf{G}^{T}||_{h}^{2},s_{0})>0$.
Thus, $C\neq 0$ everywhere here and the claims \eqref{33} are justified.

To show the statement ii), we again argue by contradiction. Assume that
there is $\tilde{s}\in \lbrack -b,b],$ $b=\frac{||\mathbf{G}^{T}||_{h}}{%
\bar{g}}<\frac{\tilde{b}_{0}}{\bar{g}}$, with $\tilde{b}_{0}$ given by %
\eqref{Strong_C}, such that $B(||\mathbf{G}^{T}||_{h}^{2},\tilde{s})=0$.
Thus, we are searching for $\tilde{s}$ in the interval $[-b,b].$ If we take $%
s=\tilde{s}$ in the third formula in \eqref{3.1}, an immediate consequence is $%
\tilde{s}\neq 0$, because of $\tilde{\phi}_{\tilde{\eta}}(||\mathbf{G}%
^{T}||_{h}^{2},\tilde{s})>0,$ $C(||\mathbf{G}^{T}||_{h}^{2},\tilde{s})\neq 0$
and $B(||\mathbf{G}^{T}||_{h}^{2},\tilde{s})=0.$

Moreover, under our assumption, the second formula in \eqref{3.2} turns out
that $\tilde{\phi}_{\tilde{\eta}}(||\mathbf{G}^{T}||_{h}^{2},\tilde{s})$
satisfies the polynomial equation%
\begin{equation}
\lbrack 1-2(1-\tilde{\eta})||\mathbf{G}^{T}||_{h}^{2}]\tilde{\phi}_{\tilde{%
\eta}}^{2}-2(2-\tilde{\eta})\bar{g}\tilde{s}\tilde{\phi}_{\tilde{\eta}}-2=0
\label{SI}
\end{equation}%
and Eq. \eqref{3.3} is reduced to 
\begin{equation}
\begin{array}{c}
2||\mathbf{G}^{T}||_{h}^{2}[1-\left( 1-\tilde{\eta}\right) ^{2}||\mathbf{G}%
^{T}||_{h}^{2}]\tilde{\phi}_{\tilde{\eta}}^{2}+\left[ 2+\tilde{\eta}-2\left(
2-\tilde{\eta}\right) \left( 1-\tilde{\eta}\right) ||\mathbf{G}^{T}||_{h}^{2}%
\right] \bar{g}\tilde{s}\tilde{\phi}_{\tilde{\eta}} \\ 
\\ 
+1-2\left( 1-\tilde{\eta}\right) ||\mathbf{G}^{T}||_{h}^{2}=0,%
\end{array}
\label{SII}
\end{equation}%
for $s=\tilde{s}$ and for any $\tilde{\eta}\in \lbrack 0,1].$ Since $||%
\mathbf{G}^{T}||_{h}<\tilde{b}_{0}$, with $\tilde{b}_{0}$ given by %
\eqref{Strong_C}, $1-\left( 1-\tilde{\eta}\right) ^{2}||\mathbf{G}%
^{T}||_{h}^{2}\neq 0$ for any $\tilde{\eta}\in \lbrack 0,1],$ but may exist some $%
\tilde{\eta}\in \lbrack 0,\frac{1}{2})$ such that $1-2(1-\tilde{\eta})||%
\mathbf{G}^{T}||_{h}^{2}=0.$ Thus, two cases must be distinguished.

a) if $1-2(1-\tilde{\eta})||\mathbf{G}^{T}||_{h}^{2}\neq 0,$ for any $%
\tilde{\eta}\in \lbrack 0,1],$ then by Eqs. \eqref{SI} and \eqref{SII} we obtain%
\begin{equation}
\tilde{\phi}_{\tilde{\eta}}(||\mathbf{G}^{T}||_{h}^{2},\tilde{s})=-\frac{1+4%
\tilde{\eta}||\mathbf{G}^{T}||_{h}^{2}}{\bar{g}\tilde{s}(2+\tilde{\eta}+4%
\tilde{\eta}||\mathbf{G}^{T}||_{h}^{2})}.  \label{SIII}
\end{equation}%
Since $\tilde{\phi}_{\tilde{\eta}}(||\mathbf{G}^{T}||_{h}^{2},\tilde{s})>0,$
then $\tilde{s}\in \lbrack -b,0).$ Having \eqref{SIII} for any $\tilde{\eta}%
\in \lbrack 0,1]$, and by using again Eq. \eqref{SI}, it results $\sqrt{%
\tilde{\eta}}\tilde{s}=-\frac{1+4\tilde{\eta}||\mathbf{G}^{T}||_{h}^{2}}{2%
\bar{g}\sqrt{2+\tilde{\eta}+4\tilde{\eta}||\mathbf{G}^{T}||_{h}^{2}}}$ which
contradicts $\tilde{s}\in \lbrack -b,0),$ if $\tilde{\eta}\neq 0,$ due
to the condition $||\mathbf{G}^{T}||_{h}<\tilde{b}_{0}$, where $\tilde{b}_{0}$ is 
given by \eqref{Strong_C}. Moreover, if $\tilde{\eta}=0$ the last relation
also gives a contradiction.

b) if $1-2(1-\tilde{\eta})||\mathbf{G}^{T}||_{h}^{2}=0$ for some $\tilde{\eta}\in \lbrack 0,\frac{1}{2})$,  then Eq. %
\eqref{SI} leads to 
\begin{equation}
\tilde{\phi}_{\tilde{\eta}}(||\mathbf{G}^{T}||_{h}^{2},\tilde{s})=-\frac{1}{%
(2-\tilde{\eta})\bar{g}\tilde{s}},  \label{SIV}
\end{equation}%
which together with Eq. \eqref{SII} yields $\sqrt{\tilde{\eta}}\tilde{s}=-%
\frac{\sqrt{1+\tilde{\eta}}||\mathbf{G}^{T}||_{h}}{\bar{g}\sqrt{2(2-\tilde{%
\eta})}}$ and $\tilde{s}\in \lbrack -b,0).$ If $\tilde{\eta}\neq 0,$ then $%
\tilde{s}=-\frac{\sqrt{1+\tilde{\eta}}||\mathbf{G}^{T}||_{h}}{\bar{g}\sqrt{2%
\tilde{\eta}(2-\tilde{\eta})}}$ and it contradicts $\tilde{s}\in \lbrack
-b,0).$ Obviously, $\tilde{\eta}=0$ provides another contradiction.

Summing up, we have $B(||\mathbf{G}^{T}||_{h}^{2},s)\neq 0,$ \ for any $s\in
\lbrack -b,b],$ $b=\frac{||\mathbf{G}^{T}||_{h}}{\bar{g}}<\frac{\tilde{b}_{0}%
}{\bar{g}}$, with $\tilde{b}_{0}$ given by \eqref{Strong_C}.

\end{proof}

We notice that according to \cref{Prop1} we knew that $\tilde{\phi}_{\tilde{%
\eta}}-s\tilde{\phi}_{\tilde{\eta}2}>0,$ when $n\geq 3,$ for any $\tilde{\eta%
}\in \lbrack 0,1]$ and $s$ such that $|s|\leq \frac{||\mathbf{G}^{T}||_{h}}{%
\bar{g}}<\frac{\tilde{b}_{0}}{\bar{g}}$. Now by \cref{Lema4}, we have
established that $\tilde{\phi}_{\tilde{\eta}}-s\tilde{\phi}_{\tilde{\eta}%
2}\neq 0$ also when $n=2,$ for any $\ \tilde{\eta}\in \lbrack 0,1]$ and $%
|s|\leq \frac{||\mathbf{G}^{T}||_{h}}{\bar{g}}<\frac{\tilde{b}_{0}}{\bar{g}}%
. $ In addition, the functions $A,$ $B,$ $C$ given in \eqref{3.2} are
homogenous of degree zero with respect to $y$ because of the same
homogeneity degree of $\tilde{\phi}_{\tilde{\eta}},$ for any $\tilde{\eta}%
\in \lbrack 0,1]$.

\begin{lemma}
\label{Lema5} The derivatives of the function $\tilde{\phi}_{\tilde{\eta}}$
with respect to $b^{2}=\frac{||\mathbf{G}^{T}||_{h}^{2}}{\bar{g}^{2}}$ and $%
s,$ i.e. $\tilde{\phi}_{\tilde{\eta}1}$ and $\tilde{\phi}_{\tilde{\eta}12}$, respectively, hold the following relations%
\begin{equation}
\begin{array}{l}
\tilde{\phi}_{\tilde{\eta}1}=\frac{\bar{g}^{2}}{2C}[\left( 1-\tilde{\eta}%
\right) B-\tilde{\eta}\tilde{\phi}_{\tilde{\eta}}^{2}]\tilde{\phi}_{\tilde{%
\eta}}^{2}, \quad 
\tilde{\phi}_{\tilde{\eta}12}=\frac{\bar{g}^{3}}{2C^{3}}\left[ A(B+C\tilde{%
\phi}_{\tilde{\eta}})[\left( 1-\tilde{\eta}\right) B-\tilde{\eta}\tilde{\phi}%
_{\tilde{\eta}}^{2}]+\tilde{\eta}^{2}H\tilde{\phi}_{\tilde{\eta}}^{4}\right] 
\tilde{\phi}_{\tilde{\eta}},%
\end{array}
\label{RRR}
\end{equation}%
for any $\tilde{\eta}\in \lbrack 0,1]$, where $H=2+\bar{g}s\tilde{\phi}_{%
\tilde{\eta}}.$
\end{lemma}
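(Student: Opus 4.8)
The plan is to derive both identities by implicit differentiation of the quartic \eqref{3.3}, which $\tilde{\phi}_{\tilde{\eta}}$ satisfies identically when regarded as a function of $b^{2}=||\mathbf{G}^{T}||_{h}^{2}/\bar{g}^{2}$ and $s$. Write $P$ for the left-hand side of \eqref{3.3}, viewed as a polynomial in $\tilde{\phi}_{\tilde{\eta}}$ whose coefficients depend on $||\mathbf{G}^{T}||_{h}^{2}$ and $s$. The computation already performed in the proof of \cref{Lema4} shows that $\partial P/\partial\tilde{\phi}_{\tilde{\eta}}=2C$, so $C$ is the natural denominator throughout, and $C\neq 0$ on the admissible range by \cref{Lema4}. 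The only further inputs needed are the algebraic relations of \cref{Lema4}, namely $C\tilde{\phi}_{\tilde{\eta}2}=\bar{g}A\tilde{\phi}_{\tilde{\eta}}$ and $C\tilde{\phi}_{\tilde{\eta}}=B+\bar{g}sA\tilde{\phi}_{\tilde{\eta}}$ from \eqref{3.1}, together with the coupling $(2-\tilde{\eta})B-2A=\tilde{\eta}\tilde{\phi}_{\tilde{\eta}}^{2}$.

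First I would obtain $\tilde{\phi}_{\tilde{\eta}1}$. Differentiating $P=0$ with respect to $b^{2}$ and using $||\mathbf{G}^{T}||_{h}^{2}=\bar{g}^{2}b^{2}$ (so that each occurrence of $||\mathbf{G}^{T}||_{h}^{2}$ releases a factor $\bar{g}^{2}$) gives
\begin{equation*}
2C\,\tilde{\phi}_{\tilde{\eta}1}+\bar{g}^{2}\frac{\partial P}{\partial(||\mathbf{G}^{T}||_{h}^{2})}=0,
\end{equation*}
where the partial is taken at fixed $\tilde{\phi}_{\tilde{\eta}}$ and $s$. It then remains to identify this explicit derivative: collecting the three coefficients of $P$ that carry $||\mathbf{G}^{T}||_{h}^{2}$ and comparing with \eqref{3.2}, the $\tilde{\phi}_{\tilde{\eta}}^{2}$-coefficient collapses to $-1+2(1-\tilde{\eta})^{2}||\mathbf{G}^{T}||_{h}^{2}$ and one finds $-\partial P/\partial(||\mathbf{G}^{T}||_{h}^{2})=[(1-\tilde{\eta})B-\tilde{\eta}\tilde{\phi}_{\tilde{\eta}}^{2}]\tilde{\phi}_{\tilde{\eta}}^{2}$. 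Dividing by $2C$ yields the first formula in \eqref{RRR}.

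For $\tilde{\phi}_{\tilde{\eta}12}$ I would differentiate the formula just found with respect to $s$. Abbreviating $U:=(1-\tilde{\eta})B-\tilde{\eta}\tilde{\phi}_{\tilde{\eta}}^{2}$, so that $\tilde{\phi}_{\tilde{\eta}1}=\frac{\bar{g}^{2}}{2C}U\tilde{\phi}_{\tilde{\eta}}^{2}$, the quotient rule gives
\begin{equation*}
\tilde{\phi}_{\tilde{\eta}12}=\frac{\bar{g}^{2}}{2C^{2}}\Big(C\,\partial_{s}(U\tilde{\phi}_{\tilde{\eta}}^{2})-U\tilde{\phi}_{\tilde{\eta}}^{2}\,\partial_{s}C\Big),
\end{equation*}
where $\partial_{s}$ denotes total differentiation in $s$, so that $B$ and $C$ are differentiated both explicitly and through $\tilde{\phi}_{\tilde{\eta}}$. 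Every $\tilde{\phi}_{\tilde{\eta}2}$ produced by the chain rule I would immediately replace by $\bar{g}A\tilde{\phi}_{\tilde{\eta}}/C$; this eliminates the derivative and supplies the extra power of $C$ needed to bring the prefactor to $\bar{g}^{3}/2C^{3}$. After this substitution and clearing denominators, the claim is equivalent to the polynomial identity
\begin{equation*}
C\Big(C\,\partial_{s}(U\tilde{\phi}_{\tilde{\eta}}^{2})-U\tilde{\phi}_{\tilde{\eta}}^{2}\,\partial_{s}C\Big)=\bar{g}\tilde{\phi}_{\tilde{\eta}}\big[A(B+C\tilde{\phi}_{\tilde{\eta}})U+\tilde{\eta}^{2}H\tilde{\phi}_{\tilde{\eta}}^{4}\big],
\end{equation*}
with $H=2+\bar{g}s\tilde{\phi}_{\tilde{\eta}}$; here the combination $B+C\tilde{\phi}_{\tilde{\eta}}$ is recognised through $C\tilde{\phi}_{\tilde{\eta}}=B+\bar{g}sA\tilde{\phi}_{\tilde{\eta}}$, and the quadratic-in-$\tilde{\eta}$ remainder is folded into $\tilde{\eta}^{2}H\tilde{\phi}_{\tilde{\eta}}^{4}$ using $(2-\tilde{\eta})B-2A=\tilde{\eta}\tilde{\phi}_{\tilde{\eta}}^{2}$.

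The hard part will be this last reduction. It is purely mechanical but long, and the bookkeeping is delicate: $B$ and $C$ depend on $s$ both explicitly and through $\tilde{\phi}_{\tilde{\eta}}$, so each chain-rule term must be split into its two contributions and the implicit one rewritten via $C\tilde{\phi}_{\tilde{\eta}2}=\bar{g}A\tilde{\phi}_{\tilde{\eta}}$ before anything is simplified, otherwise spurious higher powers of $C$ survive and the denominator fails to collapse to $C^{3}$. A useful guide during the grouping is that the quadratic-in-$\tilde{\eta}$ contributions mirror the $\tilde{\eta}^{2}\tilde{\phi}_{\tilde{\eta}}^{4}$ term already appearing in the expression for $\tilde{\phi}_{\tilde{\eta}22}$ in \eqref{33}; a discrepancy there signals a mishandled sign or coefficient.
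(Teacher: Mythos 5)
Your proposal is correct and takes essentially the same route as the paper: the paper likewise obtains $\tilde{\phi}_{\tilde{\eta}1}$ by implicitly differentiating the identity \eqref{3.3} (your observation that $\partial P/\partial \tilde{\phi}_{\tilde{\eta}}=2C$ is exactly why $2C$ is the denominator there), and then gets $\tilde{\phi}_{\tilde{\eta}12}$ by differentiating that formula in $s$ while eliminating every $\tilde{\phi}_{\tilde{\eta}2}$ through $C\tilde{\phi}_{\tilde{\eta}2}=\bar{g}A\tilde{\phi}_{\tilde{\eta}}$ and invoking $(2-\tilde{\eta})B-2A=\tilde{\eta}\tilde{\phi}_{\tilde{\eta}}^{2}$ and $C\tilde{\phi}_{\tilde{\eta}}=B+\bar{g}sA\tilde{\phi}_{\tilde{\eta}}$, merely packaging the chain-rule contributions as explicit identities for $A_{2},B_{2},C_{2}$ first. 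The final mechanical reduction you defer is also compressed in the paper into a single intermediate expression for $\tilde{\phi}_{\tilde{\eta}12}$ followed by the assertion that it yields the second formula in \eqref{RRR}, so no substantive step is missing from your plan.
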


\begin{proof}
Differentiating  the identity \eqref{3.3}, it turns out%
\begin{equation*}
\frac{\partial \tilde{\phi}_{\tilde{\eta}}}{\partial ||\mathbf{G}%
^{T}||_{h}^{2}}=\frac{1}{2C}[(1-\tilde{\eta})B-\tilde{\eta}\tilde{\phi}%
_{\eta }^{2}]\tilde{\phi}_{\eta }^{2}. 
\end{equation*}%
This substituted in $\tilde{\phi}_{\eta 1}=\bar{g}^{2}\frac{\partial \tilde{%
\phi}_{\eta }}{\partial ||\mathbf{G}^{T}||_{h}^{2}}$ gives the left-hand side 
expression in \eqref{RRR}. By \eqref{3.1} and differentiation of the formula \eqref{3.2} w.r.t. $s$, one has the identities 
\begin{eqnarray*}
A_{2} &=&\frac{\bar{g}}{C}[2A^{2}+\tilde{\eta}(2-\tilde{\eta})\tilde{\phi}_{%
\tilde{\eta}}^{2}],\qquad B_{2}=\frac{2\bar{g}}{C}(AB+\tilde{\eta}\tilde{\phi%
}_{\tilde{\eta}}^{2}), \\
C_{2} &=&-\frac{\bar{g}}{C\tilde{\phi}_{\eta }}\{AB+\tilde{\eta}[2(1-\tilde{%
\eta})-(2-\tilde{\eta})H]\tilde{\phi}_{\tilde{\eta}}^{2}\}+3\bar{g}A,
\end{eqnarray*}%
where $A_{2}=\frac{\partial A}{\partial s},$ $B_{2}=\frac{\partial B}{%
\partial s},$ $C_{2}=\frac{\partial B}{\partial s}.$ These, along with%
\begin{equation*}
\tilde{\phi}_{\tilde{\eta}12}=\frac{\bar{g}^{2}}{2C^{2}}\{(1-\tilde{\eta}%
)B_{2}C+2\bar{g}A[\left( 1-\tilde{\eta}\right) B-2\tilde{\eta}\tilde{\phi}_{%
\tilde{\eta}}^{2}]-[\left( 1-\tilde{\eta}\right) B-\tilde{\eta}\tilde{\phi}_{%
\tilde{\eta}}^{2}]C_{2}\}\tilde{\phi}_{\eta }^{2},
\end{equation*}%
lead to the second formula in \eqref{RRR}. 
\end{proof}

Taking into account \cref{Prop2}, \eqref{RRR} and \eqref{3.3},
the only thing left to exploit is the fact that the differential $1$-form $\beta$ is
closed, i.e. $s_{ij}=0$ in the slippery-cross-slope metric $\tilde{F}_{\tilde{\eta}}$, because it includes the gravitational wind $\mathbf{G}^{T}=-\bar{g}\omega ^{\sharp }$, where $\omega ^{\sharp }=h^{ji}\frac{\partial p}{\partial x^{j}}\frac{\partial }{\partial x^{i}}$ is the
gradient vector field. Indeed, it immediately results that $w_{i}=-\bar{g}%
\frac{\partial p}{\partial x^{i}}$ and $\frac{\partial w_{i}}{\partial x^{j}}%
=\frac{\partial w_{j}}{\partial x^{i}},$ where $w_{i}=h_{ij}w^{j}$ and $%
w^{i} $ denote the components of $\mathbf{G}^{T}$ (for more details, see 
\cite{slippery}), as well as the following relations%
\begin{equation}
r_{ij}=-\frac{1}{\bar{g}}w_{i|j},\qquad r_{i}=\frac{1}{\bar{g}^{2}}%
w_{i|j}w^{j},\qquad r^{i}=\frac{1}{\bar{g}^{2}}w_{\text{ }%
|j}^{i}w^{j},\qquad r=-\frac{1}{\bar{g}^{3}}w_{i|j}w^{i}w^{j},  \label{RR}
\end{equation}%
with $w_{i|j}=\frac{\partial w_{i}}{\partial x^{j}}-\Gamma _{ij}^{k}w_{k},$ $%
w_{\text{ }|j}^{i}=h^{ik}w_{k|j}$ and $\Gamma _{ij}^{k}=\frac{1}{2}%
h^{km}\left( \frac{\partial h_{jm}}{\partial x^{i}}+\frac{\partial h_{im}}{%
\partial x^{j}}-\frac{\partial h_{ij}}{\partial x^{m}}\right) $. We notice
that $||\mathbf{G}^{T}||_{h}$ is constant if and only if $r_{i}=0$ and 
moreover under either of the statements of this equivalence, $r^{i}=r=r_{0}=0$ (see \cite{slippery}).

\begin{lemma}
\label{Prop5}Let $M$ be an $n$-dimensional manifold, $n>1,$ with the
slippery-cross-slope metric $\tilde{F}_{\tilde{\eta}},$ for any $\tilde{%
\eta}\in \lbrack 0,1].$ The spray coefficients $\tilde{\mathcal{G}}_{\tilde{\eta}}^{i}$ of $\tilde{F}_{\tilde{\eta}}$ are related to the spray coefficients \linebreak $\mathcal{G}_{\alpha }^{i}=\frac{1}{4}h^{im}\left( 2\frac{\partial h_{jm}}{\partial x^{k}}-\frac{\partial h_{jk}}{\partial x^{m}}\right) y^{j}y^{k}$ of $\alpha $ by 
\begin{equation}
\tilde{\mathcal{G}}_{\tilde{\eta}}^{i}(x,y)=\mathcal{G}_{\alpha }^{i}(x,y)+%
\left[ \mathit{\Theta }(r_{00}+2\alpha ^{2}Rr)+\alpha \mathit{\Omega }r_{0}%
\right] \frac{y^{i}}{\alpha }-\left[ \mathit{\Psi }(r_{00}+2\alpha
^{2}Rr)+\alpha \mathit{\Pi }r_{0}\right] \frac{w^{i}}{\bar{g}}-\alpha
^{2}Rr^{i},  \label{SPRAY}
\end{equation}%
where%
\begin{equation}
\begin{array}{l}
r_{00}=-\frac{1}{\bar{g}}w_{i|j}y^{i}y^{j},\text{ \ \ }r_{0}=\frac{1}{\bar{g}%
^{2}}w_{i|j}w^{j}y^{i},\text{ \ \ }r=-\frac{1}{\bar{g}^{3}}w_{i|j}w^{i}w^{j},%
\text{ \ \ }r^{i}=\frac{1}{\bar{g}^{2}}w_{\text{ }|j}^{i}w^{j}, \\ 
~ \\ 
R=\frac{\bar{g}^{2}}{2\alpha ^{4}B}[\left( 1-\tilde{\eta}\right) \alpha
^{2}B-\tilde{\eta}\tilde{F}_{\tilde{\eta}}^{2}]\tilde{F}_{\tilde{\eta}}^{2},
\\ 
\\ 
\mathit{\Theta }=\frac{\bar{g}\alpha }{2E\tilde{F}_{\tilde{\eta}}}(\alpha
^{6}AB^{2}-\tilde{\eta}^{2}\bar{g}\beta \tilde{F}_{\tilde{\eta}}^{5}),\text{
\ \ }\mathit{\Psi }=\frac{\bar{g}^{2}\alpha ^{2}}{2E}(\alpha ^{4}A^{2}B+%
\tilde{\eta}^{2}\tilde{F}_{\tilde{\eta}}^{4}), \\ 
~ \\ 
\mathit{\Omega }=\frac{\bar{g}^{2}}{\alpha ^{2}BE}\{[\left( 1-\tilde{\eta}%
\right) \alpha ^{2}B-\tilde{\eta}\tilde{F}_{\tilde{\eta}}^{2}](\alpha
^{6}B^{3}+\tilde{\eta}^{2}||\mathbf{G}^{T}||_{h}^{2}\tilde{F}_{\tilde{\eta}%
}^{6})-\tilde{\eta}^{2}\alpha ^{2}\tilde{F}_{\tilde{\eta}}^{5}(\bar{g}\beta
B+||\mathbf{G}^{T}||_{h}^{2}A\tilde{F}_{\tilde{\eta}})\}, \\ 
~ \\ 
\mathit{\Pi }=\frac{\bar{g}^{3}}{2\alpha ^{3}BE}\{[\left( 1-\tilde{\eta}%
\right) \alpha ^{2}B-\tilde{\eta}\tilde{F}_{\tilde{\eta}}^{2}](2\alpha
^{6}AB^{2}-\tilde{\eta}^{2}\bar{g}\beta \tilde{F}_{\tilde{\eta}}^{5})+\tilde{%
\eta}^{2}\alpha ^{2}B\tilde{F}_{\tilde{\eta}}^{4}(2\alpha ^{2}+\bar{g}\beta 
\tilde{F}_{\tilde{\eta}})\}\tilde{F}_{\tilde{\eta}},%
\end{array}
\label{terms}
\end{equation}%
with 
\begin{equation}
\begin{array}{l}
A=-\frac{1}{\alpha ^{2}}\{\left[ 1-\left( 2-\tilde{\eta}\right) \left( 1-%
\tilde{\eta}\right) ||\mathbf{G}^{T}||_{h}^{2}\right] \tilde{F}_{\tilde{\eta}%
}^{2}-(2-\tilde{\eta})^{2}\bar{g}\beta \tilde{F}_{\tilde{\eta}}-(2-\tilde{%
\eta})\alpha ^{2}\}, \\ 
~ \\ 
B=-\frac{1}{\alpha ^{2}}\{[1-2(1-\tilde{\eta})||\mathbf{G}^{T}||_{h}^{2}]%
\tilde{F}_{\tilde{\eta}}^{2}-2(2-\tilde{\eta})\bar{g}\beta \tilde{F}_{\tilde{%
\eta}}-2\alpha ^{2}\}, \\ 
~ \\ 
C=\frac{1}{\alpha \tilde{F}_{\eta }}\left( \alpha ^{2}B+\bar{g}\beta A\tilde{%
F}_{\eta }\right) ,\text{ \ } \\ 
~ \\ 
E=\alpha ^{6}BC^{2}+(||\mathbf{G}^{T}||_{h}^{2}\alpha ^{2}-\bar{g}^{2}\beta
^{2})(\alpha ^{4}A^{2}B+\tilde{\eta}^{2}\tilde{F}_{\tilde{\eta}}^{4}).%
\end{array}
\label{ABC}
\end{equation}
\end{lemma}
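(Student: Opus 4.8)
The plan is to specialize the general $(\alpha,\beta)$-spray formula of \cref{Prop2} to the slippery-cross-slope metric $\tilde{F}_{\tilde\eta}=\alpha\tilde\phi_{\tilde\eta}(||\mathbf{G}^{T}||_{h}^{2},s)$ and then to rewrite each coefficient appearing there in terms of $A$, $B$, $C$ and $E$ by means of the relations collected in \cref{Lema4,Lema5}. First I would note that, since $\mathbf{G}^{T}=-\bar g\omega^{\sharp}$ is a gradient field, the $1$-form $\beta=h(y,\omega^{\sharp})$ is closed, so by \eqref{rs} one has $s_{ij}=0$ and hence $s_{i}=s^{i}=s_{0}^{i}=s_{0}=0$. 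Feeding this into \cref{Prop2} annihilates the term $\alpha Qs_{0}^{i}$ together with every occurrence of $s_{0}$ and $s^{i}$; in particular the factor $Q$ drops out entirely and the formula collapses to the form of \eqref{SPRAY}, with $r_{0}+s_{0}$ replaced by $r_{0}$ and $r^{i}+s^{i}$ by $r^{i}$. The $b^{i}$-term is converted into the $-w^{i}/\bar g$ term of \eqref{SPRAY} using $b^{i}=-w^{i}/\bar g$, which follows from \eqref{NOT}, and the abbreviations $r_{00},r_{0},r,r^{i}$ are then expressed through $w_{i|j}$ by \eqref{RR}.

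What remains is to identify the five scalars $R,\Theta,\Psi,\Omega,\Pi$. The substitutions are dictated by \cref{Lema4,Lema5}: I would insert $\tilde\phi_{\tilde\eta}-s\tilde\phi_{\tilde\eta2}=B/C$, $\tilde\phi_{\tilde\eta2}=\bar gA\tilde\phi_{\tilde\eta}/C$ and $\tilde\phi_{\tilde\eta22}=\tfrac{\bar g^{2}}{C^{3}}(A^{2}B+\tilde\eta^{2}\tilde\phi_{\tilde\eta}^{4})$ from \eqref{33}, together with $\tilde\phi_{\tilde\eta1}$ and $\tilde\phi_{\tilde\eta12}$ from \eqref{RRR}, into the defining quotients of \cref{Prop2}. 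The linchpin of the whole computation is the common denominator $D:=\tilde\phi_{\tilde\eta}-s\tilde\phi_{\tilde\eta2}+(b^{2}-s^{2})\tilde\phi_{\tilde\eta22}$. Writing $b^{2}-s^{2}=(||\mathbf{G}^{T}||_{h}^{2}\alpha^{2}-\bar g^{2}\beta^{2})/(\bar g^{2}\alpha^{2})$ and using the formula for $\tilde\phi_{\tilde\eta22}$, a short manipulation shows $D=E/(\alpha^{6}C^{3})$ with $E$ as in \eqref{ABC}; this identity is exactly what forces $E$ into the denominators of $\Theta,\Psi,\Omega,\Pi$, so I would establish it before anything else. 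With $D$ in hand, $R=\tilde\phi_{\tilde\eta1}/(\tilde\phi_{\tilde\eta}-s\tilde\phi_{\tilde\eta2})=C\tilde\phi_{\tilde\eta1}/B$ reproduces the stated $R$ at once, and $\Psi=\tilde\phi_{\tilde\eta22}/(2D)$ yields $\Psi$ after cancelling $C^{3}$. The identity $C-\bar gsA=B/\tilde\phi_{\tilde\eta}$ (the third relation in \eqref{3.1}) collapses the numerator of $\Theta$ to $\tfrac{\bar g}{C^{3}}(AB^{2}-\bar gs\tilde\eta^{2}\tilde\phi_{\tilde\eta}^{5})$, which after division by $2\tilde\phi_{\tilde\eta}D$ matches \eqref{terms}.

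The final bookkeeping step is homogenization: the $A,B,C$ of \eqref{3.2} are homogeneous of degree zero in $y$, whereas \eqref{terms} and \eqref{ABC} present everything through $\alpha$, $\beta$ and $\tilde F_{\tilde\eta}$. I would repeatedly use $\tilde F_{\tilde\eta}=\alpha\tilde\phi_{\tilde\eta}$, $\beta=s\alpha$ and $\bar g^{2}b^{2}=||\mathbf{G}^{T}||_{h}^{2}$ to turn powers of $\tilde\phi_{\tilde\eta}$ and $s$ into the homogeneous expressions; this is precisely the passage by which the displayed $A,B,C$ in \eqref{ABC} are recovered from those in \eqref{3.2}. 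I expect the genuinely laborious part to be $\Omega$ and $\Pi$, since they carry the $b^{2}$-derivatives $\tilde\phi_{\tilde\eta1},\tilde\phi_{\tilde\eta12}$ of \cref{Lema5} and, for $\Omega$, the extra combination $\tfrac{2\tilde\phi_{\tilde\eta1}}{\tilde\phi_{\tilde\eta}}-\tfrac{s\tilde\phi_{\tilde\eta}+(b^{2}-s^{2})\tilde\phi_{\tilde\eta2}}{\tilde\phi_{\tilde\eta}}\,\Pi$; keeping the powers of $C$ and the sign conventions straight through these substitutions, and checking that the $\tilde\eta^{2}$-terms assemble exactly as written in \eqref{terms}, is where the real care is needed. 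Everything else is routine differentiation and algebraic simplification.
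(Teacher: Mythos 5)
Your proposal is correct and follows essentially the same route as the paper's proof: after eliminating the $s$-terms (closedness of $\beta$, so $s_{ij}=0$) and converting $b^{i}=-w^{i}/\bar{g}$, you substitute the derivative identities of \cref{Lema4,Lema5} into \cref{Prop2}, and your key identity $\tilde{\phi}_{\tilde{\eta}}-s\tilde{\phi}_{\tilde{\eta}2}+(b^{2}-s^{2})\tilde{\phi}_{\tilde{\eta}22}=E/(\alpha^{6}C^{3})$ is precisely the third relation of the paper's intermediate display \eqref{RRRR}, homogenized. As a small bonus, your numerator $\frac{\bar{g}}{C^{3}}(AB^{2}-\tilde{\eta}^{2}\bar{g}s\tilde{\phi}_{\tilde{\eta}}^{5})$ for $\Theta$ is the correct one: the second relation in \eqref{RRRR} misprints this with $\tilde{\phi}_{\tilde{\eta}}^{4}$, and only the $\tilde{\phi}_{\tilde{\eta}}^{5}$ version reproduces the $\Theta$ stated in \eqref{terms}.
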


\begin{proof}
Once we have the derivatives $\tilde{\phi}_{\eta 1},$ $\tilde{\phi}_{\eta 2},
$ $\tilde{\phi}_{\eta 12}$ and $\tilde{\phi}_{\eta 22}$ by Eqs. \eqref{33} and %
\eqref{RRR}, a straightforward computation leads to the following expressions%
\begin{equation}
\begin{array}{l}
s\tilde{\phi}_{\tilde{\eta}}+(b^{2}-s^{2})\tilde{\phi}_{\tilde{\eta}2}=\frac{%
1}{\bar{g}C}(\bar{g}sB+||\mathbf{G}^{T}||_{h}^{2}A\tilde{\phi}_{\tilde{\eta}%
}), \\ 
~ \\ 
(\tilde{\phi}_{\tilde{\eta}}-s\tilde{\phi}_{\tilde{\eta}2})\tilde{\phi}%
_{\eta 2}-s\tilde{\phi}_{\tilde{\eta}}\tilde{\phi}_{\tilde{\eta}22}=\frac{%
\bar{g}}{C^{3}}(AB^{2}-\tilde{\eta}^{2}\bar{g}s\tilde{\phi}_{\tilde{\eta}%
}^{4}), \\ 
~ \\ 
\tilde{\phi}_{\tilde{\eta}}-s\tilde{\phi}_{\tilde{\eta}2}+(b^{2}-s^{2})%
\tilde{\phi}_{\tilde{\eta}22}=\frac{1}{C^{3}}[BC^{2}+(||\mathbf{G}%
^{T}||_{h}^{2}-\bar{g}^{2}s^{2})(A^{2}B+\tilde{\eta}^{2}\tilde{\phi}_{\eta
}^{4})], \\ 
~ \\ 
(\tilde{\phi}_{\tilde{\eta}}-s\tilde{\phi}_{\tilde{\eta}2})\tilde{\phi}_{%
\tilde{\eta}12}-s\tilde{\phi}_{\tilde{\eta}1}\tilde{\phi}_{\tilde{\eta}22}=%
\frac{\bar{g}^{3}}{2C^{4}}\{[\left( 1-\tilde{\eta}\right) B-\tilde{\eta}%
\tilde{\phi}_{\tilde{\eta}}^{2}](2AB^{2}-\tilde{\eta}^{2}\bar{g}s\tilde{\phi}%
_{\tilde{\eta}}^{5})+\tilde{\eta}^{2}BH\tilde{\phi}_{\tilde{\eta}}^{4}\}%
\tilde{\phi}_{\tilde{\eta}}.%
\end{array}
\label{RRRR}
\end{equation}%
By applying \cref{Prop2} and making use of Eqs. \eqref{RRRR}, we provide our claim. 
\end{proof}

Notice that if $||\mathbf{G}^{T}||_{h}^{2}$
is constant, then the formula \eqref{SPRAY} is reduced to 
\begin{equation}
\tilde{\mathcal{G}}_{\tilde{\eta}}^{i}(x,y)=\mathcal{G}_{\alpha
}^{i}(x,y)+r_{00}\left( \Theta \frac{y^{i}}{\alpha }-\Psi \frac{w^{i}}{\bar{g%
}}\right) ,  \label{S10}
\end{equation}%
because $r^{i}=r=$\ $r_{0}=0$ under this assumption. Therefore, the ODE
system \eqref{GGG} which yields the time-minimizing trajectories $\gamma
(t)=(\gamma ^{i}(t)),$ $i=1,...,n$ on the slippery cross slope under the
influence of the active wind $\mathbf{G}_{\tilde{\eta}}$ is provided by the
system \eqref{geo1}, where the spray coefficients are $\tilde{\mathcal{G}}_{%
\tilde{\eta}}^{i}(\gamma (t),\dot{\gamma}(t))$ from \eqref{SPRAY}, with $%
\tilde{F}_{\tilde{\eta}}(\gamma (t),\dot{\gamma}(t))=1$.

It is worth noting that the presented investigation can be generalized by making the assumption  that the slippery cross slope is non-uniform. This means that the along-traction coefficient $\tilde{\eta}$ could be non-constant, being a function of the position $x\in M$, i.e. $\tilde{\eta}=\tilde{\eta}(x)\in \lbrack 0,1]$. In such improved model the impact of a varying  traction yields a slippery-cross-slope metric which is a little bit more extensive than the general $(\alpha, \beta )$-metrics, namely $\tilde{F}_{\tilde{\eta}}(x,y)=\alpha \tilde{\phi}_{\tilde{\eta}}(||G^{T}||_{h}^{2},s,\tilde{\eta}(x))$. Then $\tilde{\phi}_{\tilde{\tilde{\eta}}}$ would depend in addition on a third variable $\tilde{\eta}(x)$. 
Analogous technical scenario will occur in the extended theory, if we admit the varying 
self-speed $f$ of a craft on the slippery cross slope, i.e. $||u||_{h}=f(x)\in (0,1]$ (see, e.g. \cite{kopi6} in this regard).

\section{Examples}
\label{Sec_5}

In this section, our objective is to apply the general theory developed in this paper by emphasizing a two-dimensional case based on an inclined plane as well as a Gaussian bell-shaped surface. For  comparison and clarity, we continue the line of investigation presented in recent research \cite{slippery, cross}. 

Let $(M, h)$ be a surface embedded in $\mathbb{R}^{3}$, i.e. a $2$%
-dimensional Riemannian manifold, and let $\pi _{O}$ be the tangent plane to 
$M$ at an arbitrary point $O\in M$. We parametrize $M$ by $(x^{1},x^{2})\in M
$ $\rightarrow (x=x^{1},y=x^{2},z=f(x^{1},x^{2}))\in \mathbb{R}^{3},$ where $%
f$ a smooth function on $M$. Thus, the Riemannian metric $h$ induced on $M$
is given by $\left( h_{ij}(x^{1},x^{2})\right) =\left( 
\begin{array}{cc}
1+f_{x^{1}}^{2} & f_{x^{1}}f_{x^{2}} \\ 
f_{x^{1}}f_{x^{2}} & 1+f_{x^{2}}^{2}%
\end{array}%
\right) ,$ $i,j=1,2,$ the functions $f_{x^{1}}$ and $f_{x^{2}}$ denoting the
partial derivatives of $f$ with respect to $x^{1}$ and $x^{2}$,
respectively, and the tangent plane $\pi _{O}$ to $M$ is spanned by the
vectors $\frac{\partial }{\partial x^{1}}=(1,0,f_{x^{1}})$ and $\frac{%
\partial }{\partial x^{2}}=(0,1,f_{x^{2}})$.

If $\mathbf{G}$ denotes a gravitational field in $\mathbb{R}^{3}$ that
affects a mountain slope $M$, it can be decomposed into two orthogonal
components, $\mathbf{G}=\mathbf{G}^{T}+\mathbf{G}^{\perp }$, where $\mathbf{G%
}^{\perp }$ is normal to $M$ in $O$ and $\mathbf{G}^{T}$ is tangent to $M$
in $O$. The latter, acting  along the negative gradient, is called a gravitational wind, and it can be locally read as 
\begin{equation}
\mathbf{G}^{T}=-\frac{\bar{g}}{q+1}(f_{x^{1}},f_{x^{2}},q)=-\frac{\bar{g}}{%
q+1}\left( f_{x^{1}}\frac{\partial }{\partial x^{1}}+f_{x^{2}}\frac{\partial 
}{\partial x^{2}}\right) ,  \label{wind}
\end{equation}%
with $||\mathbf{G}^{T}||_{h}=\bar{g}\sqrt{\frac{q}{q+1}},$ where $%
q=f_{x^{1}}^{2}+f_{x^{2}}^{2}.$ We notice that $\mathbf{G}^{T}=%
0$ in the critical points of $M$ because in such points $%
q=0$.

By constructing a rectangular basis $\{e_{1},e_{2}\}$ in the tangent plane $%
\pi _{O}$, where $e_{1}$ has the same direction as $\mathbf{G}^{T}$, the
parametric equations of the indicatrix of the slippery cross slope metric $%
\tilde{F}_{\tilde{\eta}}$ in the coordinates $(X,Y)$, which correspond to $%
\{e_{1},e_{2}\}$, can be written as 
\begin{equation}
\left\{ 
\begin{array}{rll}
X & = & (1-\tilde{\eta}||\mathbf{G}^{T}||_{h}\cos \theta )\cos \theta +||%
\mathbf{G}^{T}||_{h} \\ 
~ &  &  \\ 
Y & = & (1-\tilde{\eta}||\mathbf{G}^{T}||_{h}\cos \theta )\sin \theta 
\end{array}%
,\right.   \label{2_indicatrix}
\end{equation}%
for any clockwise direction $\theta \in \lbrack 0,2\pi )$ of the
self-velocity $u$, with $||u||_{h}=1,$ and for each along-traction
coefficient $\tilde{\eta}\in \lbrack 0,1].$ An equivalent equation to the system 
\eqref{2_indicatrix} is%
\begin{equation}
\sqrt{\left( X-||\mathbf{G}^{T}||_{h}\right) ^{2}+Y^{2}}=X^{2}+Y^{2}-(2-%
\tilde{\eta})X||\mathbf{G}^{T}||_{h}+(1-\tilde{\eta})||\mathbf{G}%
^{T}||_{h}^{2},  \label{EE}
\end{equation}%
which results from \eqref{2_indicatrix} by eliminating $\theta .$ 

Since any tangent vector of $\pi _{O}$ can be written as $y^{1}\frac{%
\partial }{\partial x^{1}}+y^{2}\frac{\partial }{\partial x^{2}}%
=Xe_{1}+Ye_{2},$ with $e_{1}=-\frac{1}{\sqrt{q(q+1)}}(f_{x^{1}},f_{x^{2}},q)$
and $e_{2}=\frac{1}{\sqrt{q}}(f_{x^{2}},-f_{x^{1}},0),$ we can obtain the
link between the coordinates $(X,Y)$ and $(y^{1},y^{2})$, namely 
\begin{eqnarray*}
X &=&-\sqrt{\frac{q+1}{q}}\left( y^{1}f_{x^{1}}+y^{2}f_{x^{2}}\right) =\frac{%
h(y,\mathbf{G}^{T})}{||\mathbf{G}^{T}||_{h}}=-\frac{\bar{g}\beta }{||\mathbf{%
G}^{T}||_{h}}, \\
Y &=&\frac{1}{\sqrt{q}}\left( y^{1}f_{x^{2}}-y^{2}f_{x^{1}}\right) , \\
X^{2}+Y^{2}
&=&(y^{1})^{2}+(y^{2})^{2}+(y^{1}f_{x^{1}}+y^{2}f_{x^{2}})^2=h_{ij}y^{i}y^{j}=%
\alpha ^{2}.
\end{eqnarray*}%
By the last relations, Eq. \eqref{EE} is equivalent to 
\begin{equation}
\sqrt{\alpha ^{2}+2\bar{g}\beta +||\mathbf{G}^{T}||_{h}^{2}}=\alpha ^{2}+(2-%
\tilde{\eta})\bar{g}\beta +(1-\tilde{\eta})||\mathbf{G}^{T}||_{h}^{2},
\label{III}
\end{equation}%
which yields Eq. \eqref{MAMA_general}, by Okubo's method. Furthermore, in the above study, the slippery-cross-slope metric $\tilde{F}_{\tilde{\eta}}$, which satisfies Eq. \eqref{MAMA_general}, is described only at the regular points $O$ of $M$ ($q(O)\neq 0$), however, it is actually  well defined at the critical points of $M$ as well, where it simplifies to the Riemannian metric $h$.

\subsection{Inclined plane}

In order to compare the current results to the previous investigations we adjust Eqs. \eqref{2_indicatrix} to the planar slope given by $z=x/2$ (i.e. $f(x^{1},x^{2})=x/2$, where $x=x^{1},$ $y=x^{2}$ and $q=1/4)$, taking the regular point $O=(0,0)$. Its slope angle is  $26.6^{\circ}$ and it turns out that $h=\sqrt{h_{ij}y^{i}y^{j}}$, where $\left(h_{ij}(x^{1},x^{2})\right) =\left( 
\begin{array}{cc}
5/4 & 0 \\ 
0 & 1%
\end{array}%
\right), i, j=1,2$ as well as $\mathbf{G}^{T}=-\frac{2\bar{g}}{5}\frac{\partial }{\partial x^{1}}$, and $||\mathbf{G}^{T}||_{h}=\frac{\bar{g}}{\sqrt 5}$. Furthermore, it results $y^{1}=-2X/\sqrt{5}$ and $y^{2}=-Y$.  
\begin{figure}[h!]
\centering
~\includegraphics[width=0.467\textwidth]{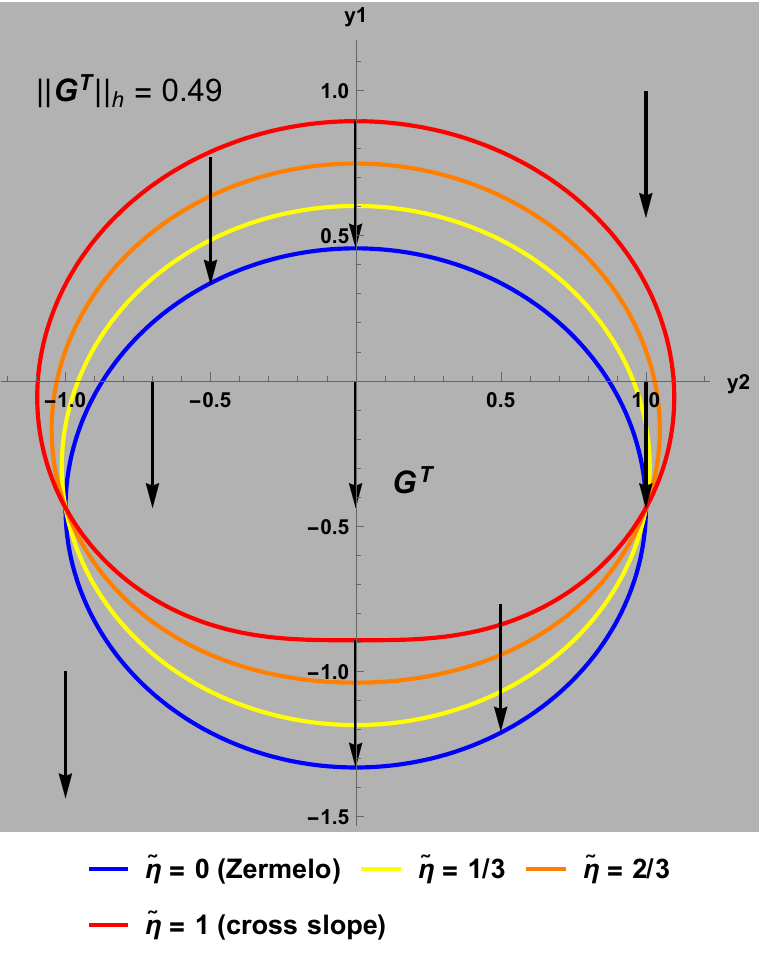} ~
\includegraphics[width=0.499\textwidth]{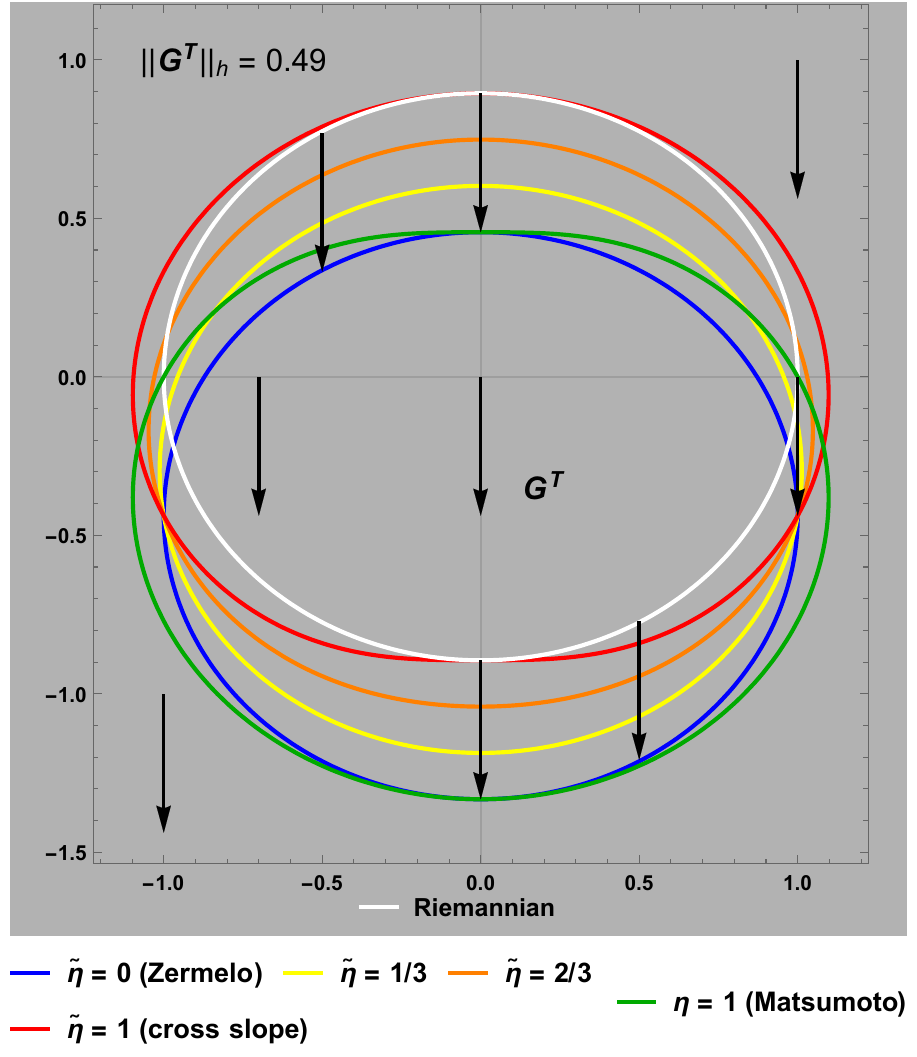} 
\caption{Left: The comparison of the Finslerian indicatrices (lima\c{c}ons) centered at the
origin on the planar slippery cross slope $z=x/2$ (with the slope angle $26.6^{\circ}$)
under the action of the gravitational wind (black arrows) of constant force 
 $||\mathbf{G}^{T}||_{h}=0.49$ in relation to various values of the
along-traction coefficient $\protect\tilde{\eta}$, presented in the coordinate system $y^1Oy^2$; $t=1$. The new slippery-cross-slope indicatrices ($\protect%
\tilde{\eta} \in \{\frac13\ (\textnormal{yellow}), \frac23\ (\textnormal{orange})\}$) are located between the boundary cases, i.e. the Zermelo case ($\protect\tilde{\eta} =0$, blue ellipse) and the cross slope case ($\protect\tilde{\eta} =1$, red). The
steepest downhill direction is indicated by the negative axis $y^1$. All the slippery-cross-slope indicatrices, i.e. for any $\protect\tilde{\eta}\in[0, 1]$ intersect each other in two fixed points, which correspond to $\theta\in\{90^{\circ}, 270^{\circ}\}$ or, equivalently, $\tilde{\theta}\in\{63.9^{\circ}, 296.1^{\circ}\}$, respectively, and $||v_{\tilde{\eta}}||_h\approx1.114$, where $||u||_h=1$. 
Right: As on the left, compared in addition to the Riemannian indicatrix (white) generated by the velocity $v_{\tilde{\eta}}=u$ in this case as well as the Matsumoto indicatrix (green).   
}
\label{fig_qs_plane_indicatrix}
\end{figure}
Basing on the general theory developed in the preceding sections, namely the strong convexity condition $||\mathbf{G}^{T}||_{h}<\tilde{b}_{0},$ where $\tilde{b}_{0}$ given by \eqref{Strong_C}, we get the equivalent  relation for the inclined plane, i.e. $\bar{g}<\delta_{1}(\tilde{\eta})$, where 
\begin{equation}
\delta_{1}(\tilde{\eta})=\left\{ 
\begin{array}{cc}
\frac{\sqrt{5}}{1-\tilde{\eta}}, & \text{for}\quad \tilde{\eta}\in \lbrack 0,\frac{1}{3}]
\\~\\
\frac{\sqrt{5}}{2\tilde{\eta}}, & \text{for}\quad\tilde{\eta}\in (\frac{1}{3},1]%
\end{array}%
.\right.  \label{plane_convexity}
\end{equation}%
One can observe that the condition for strong convexity for the planar slope under consideration leads to 
the restriction $\bar{g}<\sqrt{5}/2$ in the most stringent case, i.e. $\tilde{\eta}=1$ (CROSS)\footnote{Since $||\mathbf{G}^{T}||_{h}=\bar{g}/\sqrt{5}$ for the inclined plane $z=x/2$, while $||\mathbf{G}^{T}||_{h}<0.5$ in the cross slope case ($\tilde{\eta}=1$).}; for clarity, see \Cref{fig_quasi_plane_convexity} and \Cref{fig_quasi_Gauss_convexity} in this regard.  On the other end,  the convexity condition in the Zermelo case reads $\bar{g}<\sqrt{5}$ and $||\mathbf{G}^{T}||_{h}<1$. In what follows, we therefore consider a bit weaker wind force, i.e. $0.49$ so that it is possible to compare all the cases and to make the differences between the indicatrices more visible. 
Also, it is worth mentioning that all time geodesics are the Euclidean straight lines here because the Finsler metric is the same at every point, i.e. it is position-independent.

In \cref{fig_qs_plane_indicatrix}, we compare the new slippery-cross-slope indicatrices which correspond to various values of the along-traction coefficient $\tilde{\eta}$, including the edge cases (ZNP and CROSS), where the gravitational wind force is constant on the planar slope, i.e. $||\mathbf{G}^{T}||_{h}=0.49$. Notice that all the lima\c{c}ons\footnote{The lima\c{c}ons simplify to the ellipses in the cases of Zermelo and Riemannian. We recall that the rigid translation of the Riemannian ellipse by $\mathbf{G}^{T}$ yields the congruent Zermelo (Randers) indicatrix; see, e.g., \cite{chern_shen}.}, i.e. for all $\protect\tilde{\eta}\in[0, 1]$, under the impact of the wind, intersect each other in two fixed points, which correspond to $\theta\in\{90^{\circ}, 270^{\circ}\}$ or, equivalently, $\tilde{\theta}\in\{63.9^{\circ}, 296.1^{\circ}\}$, respectively, and $||v_{\tilde{\eta}}||_h\approx1.114$; cf. \cite[p. 16]{cross}.

\begin{figure}[h!]
\centering
~\includegraphics[width=0.55\textwidth]{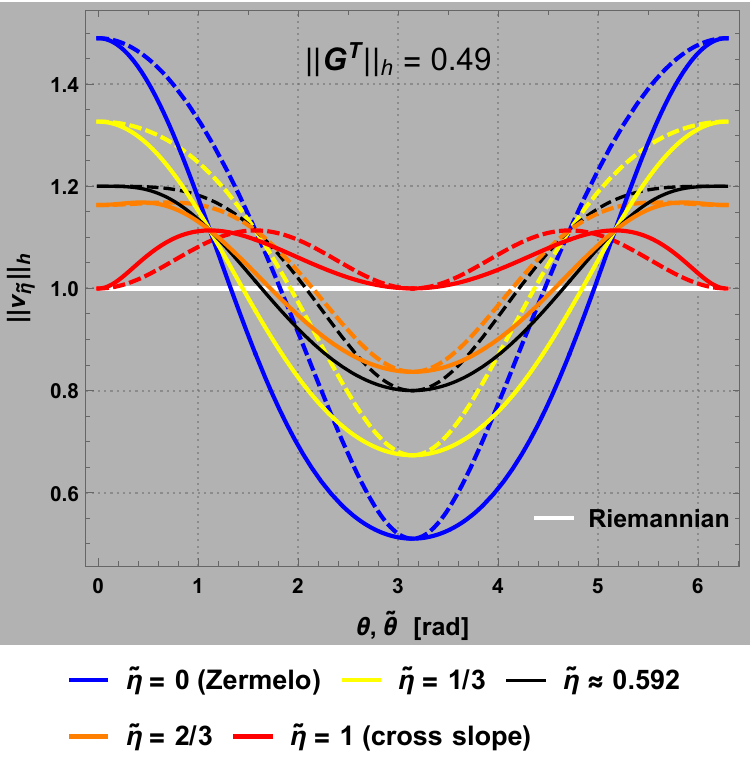} 
\caption{A comparison of the resultant speeds $||v_{\protect\tilde{\eta}}||_{h}$ on the planar
slippery cross slope (given by $z=x/2$) being the functions of direction $\protect\theta$ of the
self-velocity $u$ (dashed colors) and direction $\tilde{\protect\theta}$ of
the resultant velocity $v_{\tilde{\eta}}$ (solid colors), under the action of a constant gravitational wind ($||\mathbf{G}^{T}||_{h}=0.49$), for various values of the along-traction coefficient, i.e. 
$\tilde{\eta}~\in~\{0 \ \textnormal{(ZNP, blue)}, 1/3\  \textnormal{(yellow)},  \sim~0.592\  (\textnormal{black}), 2/3\  \textnormal{(orange)}, 1 \ \textnormal{(CROSS, red)}\}$. The Riemannian case ($v_{\tilde{\eta}}=u$) is marked in white. The angles $\protect\theta ,\tilde{\protect\theta}\in \lbrack 0, 2
\protect\pi)$ are measured clockwise from the steepest downhill
direction $-y^{1}$ (or $OX$).  For all $\protect\tilde{\eta}\in[0, 1]$ the speed curves intersect each other at two fixed points, which correspond to $\theta\in\{90^{\circ}, 270^{\circ}\}$ (dashed) or, equivalently, $\tilde{\theta}\in\{63.9^{\circ}, 296.1^{\circ}\}$ (solid), respectively, and $||v_{\tilde{\eta}}||_h\approx1.114$, where $||u||_h=1$. 
 The extremum speeds (global), i.e. $||v_{\protect\eta}||_{h}\in \{1.49,0.51\}$ are obtained only if  $\tilde{\eta}=0$.}
\label{fig_sx_speeds}
\end{figure}

We analyze next the resultant speeds $||v_{\tilde{\eta}}||_{h}$ related to the above indicatrices on the planar slippery cross slope, presented as the functions of both direction $\theta$ of the velocity $u$ and direction 
$\tilde{\theta}$ of the velocity $v$,   
in the presence of the same gravitational wind $\mathbf{G}^{T}$. The outcome is presented in \cref{fig_sx_speeds}, including in addition the unperturbed Riemannian case, where $v_{\tilde{\eta}}=u$. In particular, observe that for any given along-traction coefficient, the lowest speed is reached when going the steepest uphill direction, i.e. $\theta=\tilde{\theta}=\pi$ (and also downhill in the boundary case $\tilde{\eta}=1$), which is quite natural being in line with the real world situations. However, interestingly, the highest speed is not always obtained on the slippery cross slope when moving exactly in the steepest downhill direction, i.e. $\theta=0$. This is somewhat in contrast with a natural intuition, since a runner can usually expect to reach the highest speed on a hillside when running downhill along the negative gradient path. More precisely, for any  $\tilde{\eta}\in[0, \frac{1}{49} \left(99-13 \sqrt{29}\right)\approx 0.592)$ there exist one global maximum (for $\theta=0$) and one global minimum (for $\theta=\pi$). In turn, for $\tilde{\eta}\in[\sim 0.592, 1)$ there are two minimum values when $\theta=\pi$ (global) or $\theta=0$ (local) as well as two equal maximum values when $\theta=\arccos\{100(\tilde{\eta}-1)/[\tilde{\eta}(49\tilde{\eta}-98)]\}$ or $\theta=2\pi-\arccos\{100(\tilde{\eta}-1)/[\tilde{\eta}(49\tilde{\eta}-98)]\}$ (global). The last formulae come from the elementary differential analysis of the function $||v_{\tilde{\eta}}||_{h}$ based on the system  \eqref{2_indicatrix}; for the sake of clarity, see \cref{fig_sx_max_speeds}. Remark that ${||v_{\tilde{\eta}}||_{h}}_{\max}\approx1.20007$ for the critical case $\tilde{\eta}\approx 0.592$, where the corresponding direction $\theta$ is still zeroed. To complement, in the edge case $\tilde{\eta}=1$ this has a global minimum value of 1 when $\theta\in\{0, \pi\}$ and a global maximum value of about 1.114 when $\theta\in\{\pi/2,  3\pi/2\}$.   

\begin{figure}[H]
\centering
~\includegraphics[width=0.52\textwidth]{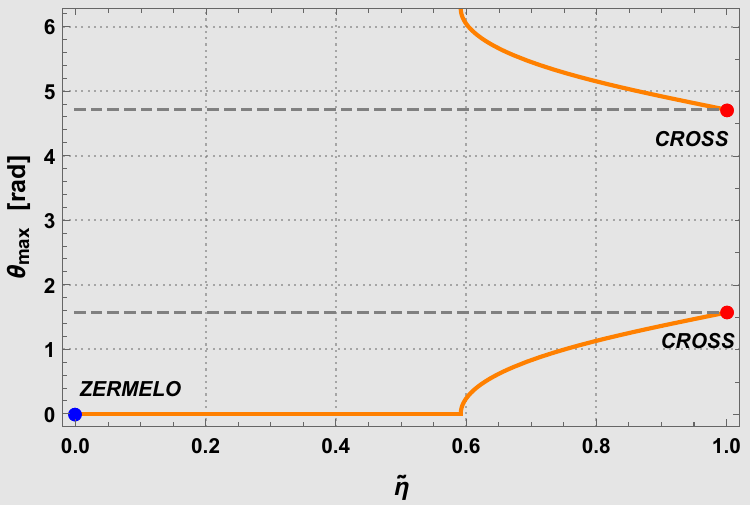} 
\caption{The relation between the along-traction coefficient $\tilde{\eta}$ and the direction $\theta_{\max}$ of the velocity $u$ for which  the corresponding resultant speed $||v_{\tilde{\eta}}||_h$ is maximal on the slippery cross slope, modeled by the inclined plane $z=x/2$, where $||\mathbf{G}^{T}||_{h}=0.49$. 
Note that the highest speed is not always obtained on the slippery cross slope when moving exactly in the steepest downhill direction. For any  $\tilde{\eta}\in[0, \sim 0.592]$ the highest speed $||v_{\tilde{\eta}}||_{h}$ is reached if indeed $\theta=0$, however for $\tilde{\eta}\in(\sim 0.592, 1]$ the maximum is obtained if $\theta=\arccos\{100(\tilde{\eta}-1)/[\tilde{\eta}(49\tilde{\eta}-98)]\}$ or $\theta=2\pi-\arccos\{100(\tilde{\eta}-1)/[\tilde{\eta}(49\tilde{\eta}-98)]\}$. In particular, for $\tilde{\eta}=1$ this is a maximum value of about 1.114 when $\theta\in\{\pi/2,  3\pi/2\}$.  
}
\label{fig_sx_max_speeds}
\end{figure}

Next, we show the deformations of the initial Riemannian indicatrix (white) in relation to various along-traction coefficients $\tilde{\eta}$ and wind forces $||\mathbf{G}^{T}||_{h}$. The outcomes being the convex lima\c{c}ons are compared in \cref{fig_qs_plane_indicatrix2}. In particular, the Zermelo ellipse (dashed and solid blue) can be obtained by a rigid  $\mathbf{G}^{T}$-translation of the Riemannian $h$-circle, which is well known fact in Finsler geometry \cite{chern_shen, BRS}.  
\begin{figure}[h!]
\centering
~\includegraphics[width=0.32\textwidth]{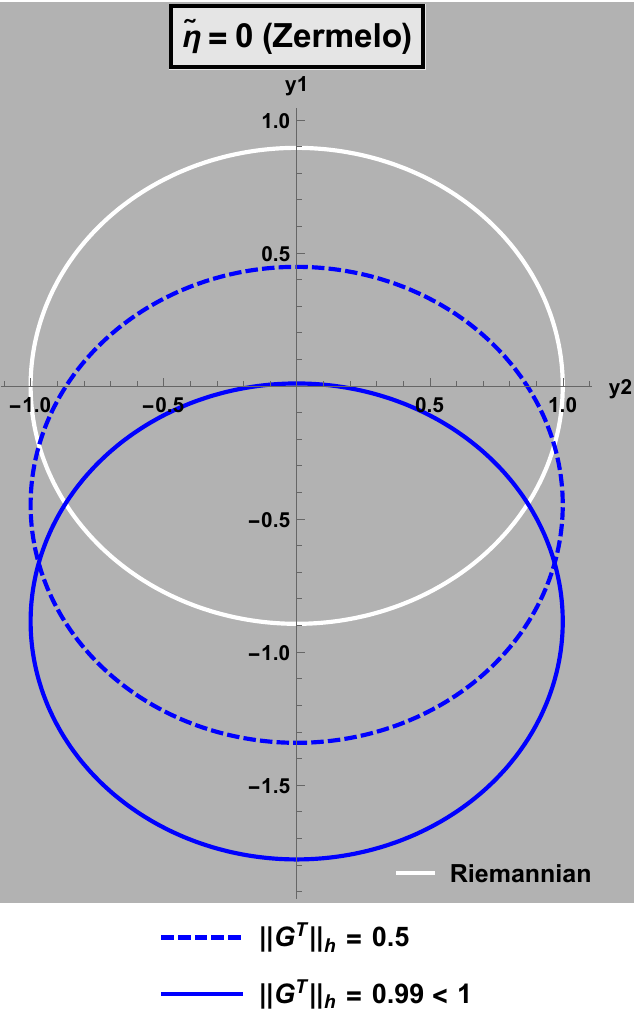} 
~\includegraphics[width=0.32\textwidth]{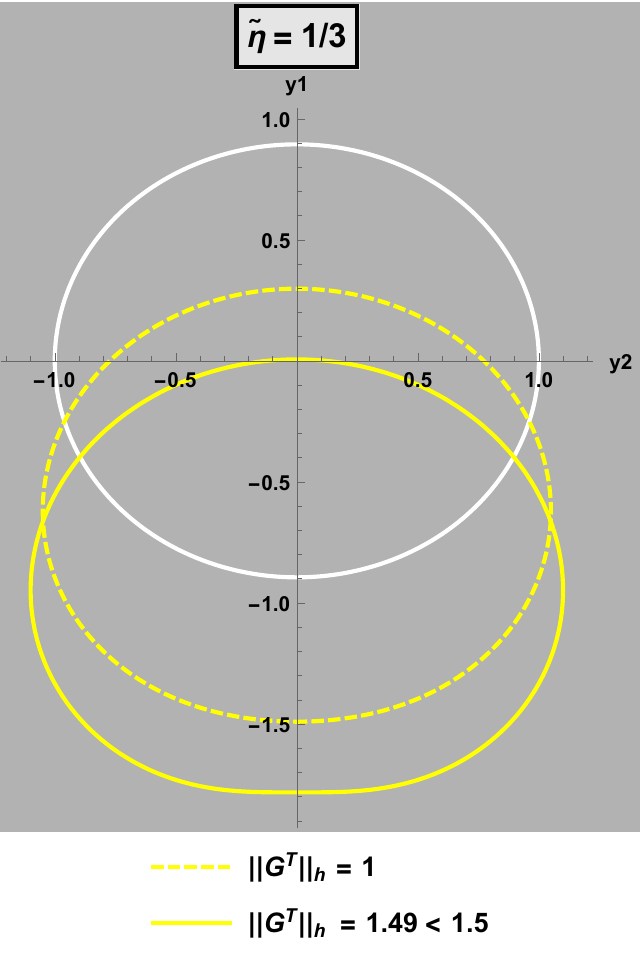} 
~\includegraphics[width=0.32\textwidth]{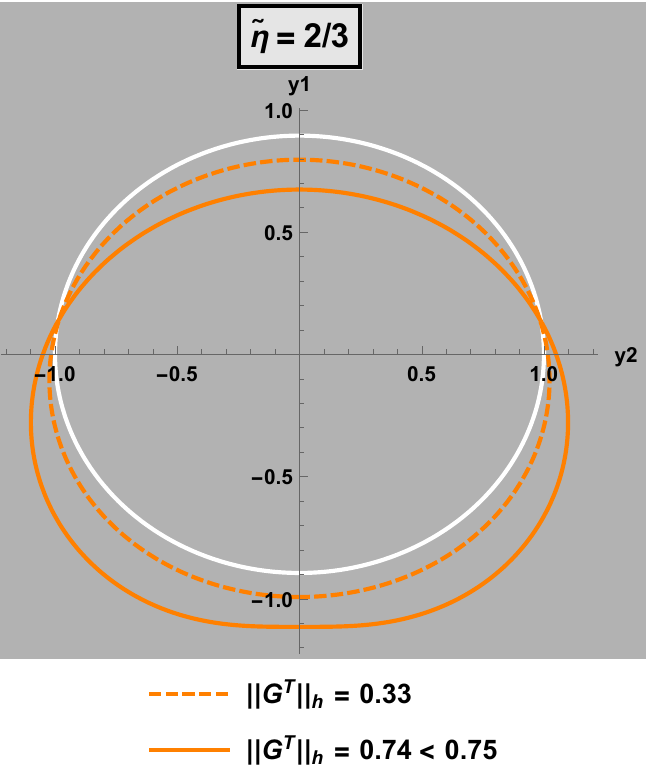} 
\par
\ 
\par
~\includegraphics[width=0.34\textwidth]{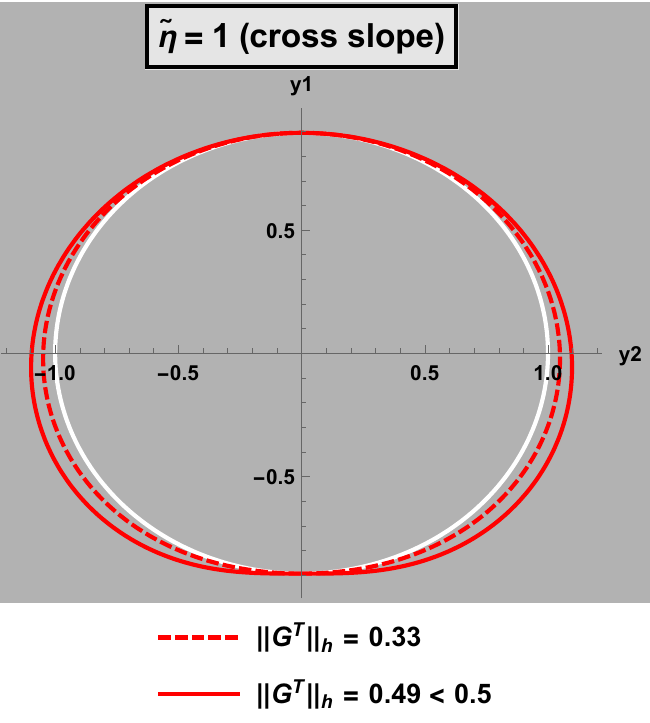} \qquad 
~\includegraphics[width=0.42\textwidth]{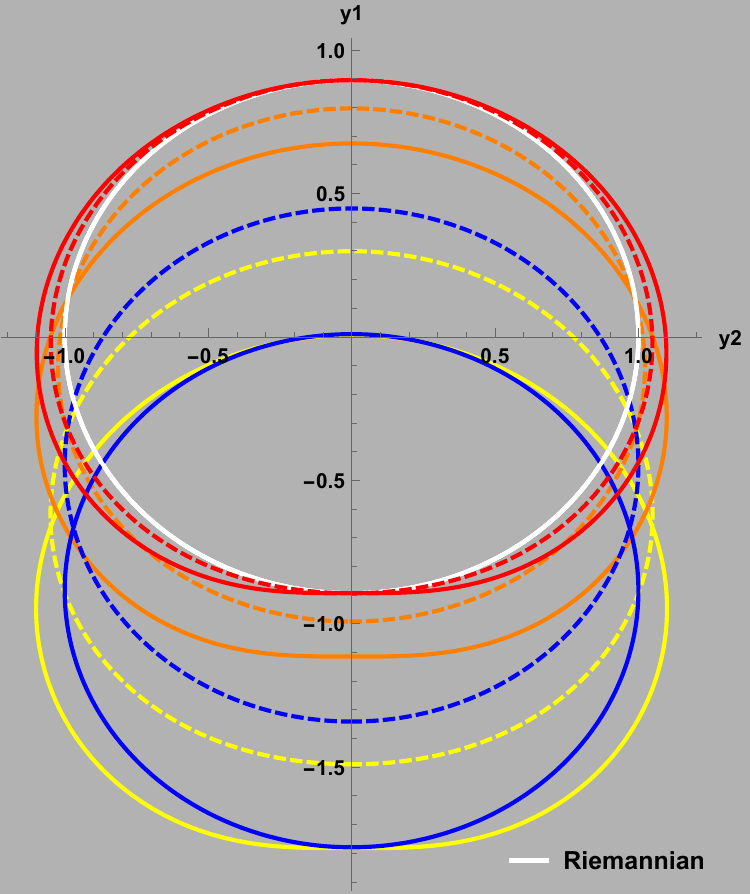} 
\caption{The evolution of the Finslerian indicatrices centered at the origin on the planar slippery cross slope $z=x/2$, under the action of the gravitational wind of various forces and  along-traction coefficients $\protect\tilde{\eta}\in\{0, 1/3, 2/3, 1\}$, presented in the coordinate system $y^1Oy^2$; $t=1$. All the outcomes are compared with each other on the bottom right. The norms $||\mathbf{G}^{T}||_{h}$, which correspond to the solid colors, represent approximately the strongest
allowable (because of the restrictions on convexity) gravitational wind on the slope in each case $\protect\eta$ being considered. The steepest downhill direction on the slope is indicated
by the negative axis $y^1$, and the elliptical Riemannian indicatrix ($h$-circle) is marked in white in each case.}
\label{fig_qs_plane_indicatrix2}
\end{figure}

\ 

Finally, another new feature merits mentioning here. Namely, we have
\begin{remark}
Among the solutions to other problems of optimal navigation (MAT, ZNP, CROSS, SLIPPERY), the upper bound of weak wind force did not exceed 1 due to the restrictions on strong convexity. Namely, $||\mathbf{G}^{T}||_{h}<1=||u||_h$ in the most relaxed case, i.e. the Zermelo navigation. Now, in contrast, exceeding the value 1 (i.e. $||\mathbf{G}^{T}||_{h}<3/2$) can still admit strong convexity of the Finslerian indicatrix (lima\c{c}on) for $\tilde{\eta}\in(0, 1/2)$, containing its center within. Consequently, this also preserves local optimality of time geodesics on the slippery cross slope in the presence of the gravitational wind $\mathbf{G}^{T}$. The related example is presented in \cref{fig_qs_plane_indix}.   
\end{remark}

\begin{figure}[h!]
\centering
~\includegraphics[width=0.42\textwidth]{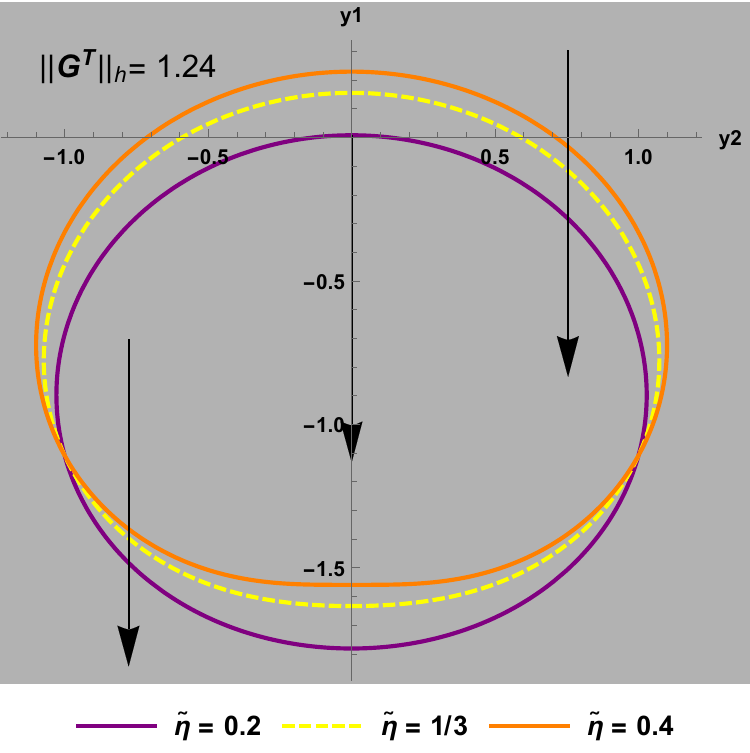} 
~\includegraphics[width=0.42\textwidth]{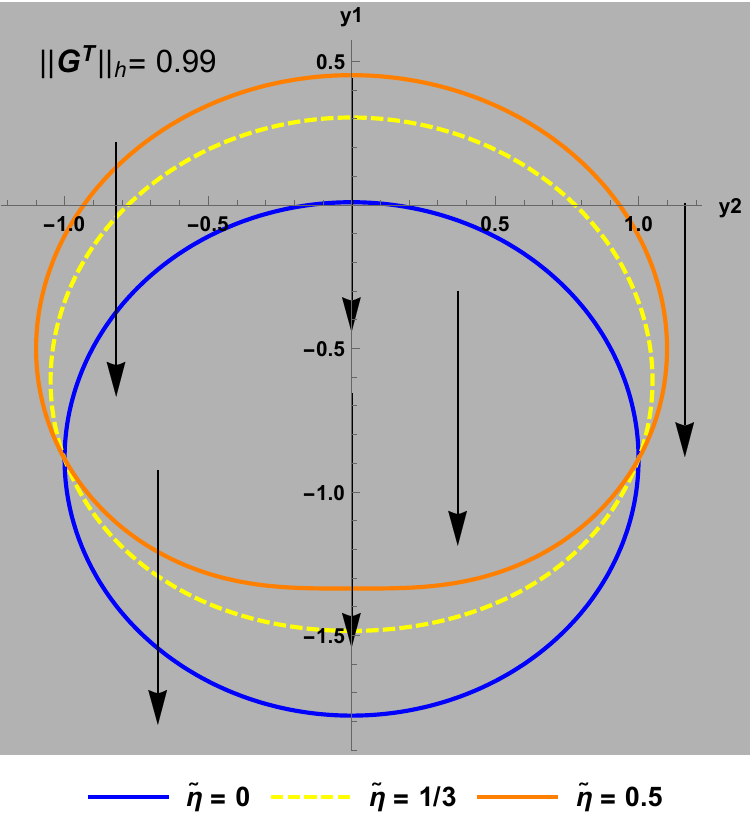}
\caption{Left: the slippery-cross-slope indicatrices (lima\c{c}ons) centered at $(0, 0)$ are still convex, although $||\mathbf{G}^{T}||_{h}=1.24>1=||u||_h$, where $\tilde{\eta}\in\{0.2, 1/3, 0.4\}$, which is in contrast to all other navigation problems aforementioned. Right: the convex slippery-cross-slope indicatrices with $\tilde{\eta}\in\{1/3, 0.5\}$ are compared to the Zermelo ellipse  ($\tilde{\eta}=0$, blue) under about strongest allowable (due to the restriction on convexity) wind in the Zermelo case, i.e. $||\mathbf{G}^{T}||_{h}=0.99<1$. 
}
\label{fig_qs_plane_indix}
\end{figure}

\subsection{Gaussian bell-shaped hillside 
}

We consider a Gaussian bell-shaped hillside $\mathfrak{G}$ given by the
Gaussian function $z=\frac{3}{2}e^{-(x^{2}+y^{2})}$ like in \cite%
{slippery,cross}. It follows from Eq. \eqref{wind} that the gravitational
wind related to $\mathfrak{G}$ is now 
\begin{equation*}
\mathbf{G}^{T}=\frac{3\bar{g}e^{-(x^{2}+y^{2})}}{%
9(x^{2}+y^{2})e^{-2(x^{2}+y^{2})}+1}\left(
x,y,-3(x^{2}+y^{2})e^{-(x^{2}+y^{2})}\right) ,
\end{equation*}%
since $f(x^{1},x^{2})=\frac{3}{2}e^{-(x^{2}+y^{2})}$, where $x=x^{1},$ $%
y=x^{2}$ and $q=9(x^{2}+y^{2})e^{-2(x^{2}+y^{2})}.$ 
For convenience, we apply the following parametrization to $\mathfrak{G}$ 
\begin{equation*}
(\rho ,\varphi )\in \mathfrak{G}\rightarrow (x=\rho \cos \varphi ,\ y=\rho
\sin \varphi ,\ z=3/2e^{-\rho ^{2}})\in \mathbb{R}^{3},
\end{equation*}%
where $\rho \geq 0$ and $\varphi \in \lbrack 0,2\pi )$. It turns out that $%
\mathbf{G}^{T}(\rho ,\varphi )=\frac{3\bar{g}\rho e^{-\rho ^{2}}}{9\rho
^{2}e^{-2\rho ^{2}}+1}\frac{\partial }{\partial \rho }$ and, since $%
(h_{ij}(\rho ,\varphi ))=\left( 
\begin{array}{cc}
9\rho ^{2}e^{-2\rho ^{2}}+1 & 0 \\ 
0 & \rho ^{2}%
\end{array}%
\right) ,$ \ $i,j=1,2,$ the force of the gravitational wind is $||\mathbf{G}%
^{T}||_{h}=\frac{3\bar{g}\rho e^{-\rho ^{2}}}{\sqrt{9\rho ^{2}e^{-2\rho
^{2}}+1}}$ as well as $||\mathbf{G}^{T}||_{h}\in \lbrack 0,\frac{3\bar{g}}{\sqrt{%
2e+9}}],$ for any $\rho \geq 0$. Therefore, the maximum wind force $\frac{3%
\bar{g}}{\sqrt{2e+9}}\approx 0.79\bar{g}$ is obtained for $\rho =\frac{1}{%
\sqrt{2}}\approx 0.71$. The rescaled gravitational acceleration $\bar{g}$
needs handling with greater care to ensure that the geodesics will be indeed
optimal in the sense of time. Taking into consideration \cref{Theorem1}, the
indicatrix of the slippery-cross-slope metric $\tilde{F}_{\tilde{\eta}}$ on
the entire Gaussian bell-shaped hillside $\mathfrak{G}$ is strongly convex
if and only if $\bar{g}<\frac{\sqrt{2e+9}}{6}\approx 0.63$, for any
along-traction coefficient $\tilde{\eta}\in \lbrack 0,1]$. More precisely,
recalling the general condition $||\mathbf{G}^{T}||_{h}<\tilde{b}_{0},$
where $\tilde{b}_{0}$ is given by \eqref{Strong_C}, we have proved the
following result

\begin{lemma}
\label{lemma_gauss} The indicatrix of the slippery-cross-slope metric $\tilde{F}_{\tilde{\eta}}$  is strongly convex 
 on the entire surface $\mathfrak{G}$ if and only if $\bar{g}<\delta _{2}(%
\tilde{\eta}),$ where $\ \delta _{2}(\tilde{\eta})=\left\{ 
\begin{array}{cc}
\frac{\sqrt{2e+9}}{3(1-\tilde{\eta})}, & \text{if}\quad \tilde{\eta}\in
\lbrack 0,\frac{1}{3}] \\ 
~ &  \\ 
\frac{\sqrt{2e+9}}{6\tilde{\eta}}, & \text{if}\quad \tilde{\eta}\in (\frac{1%
}{3},1]%
\end{array}%
.\right. $
\end{lemma}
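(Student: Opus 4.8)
The plan is to reduce the claim about the \emph{entire} surface $\mathfrak{G}$ to a single pointwise inequality, leaning on the general strong convexity criterion already established. By \Cref{Theorem1} (equivalently \Cref{Lema3}), the indicatrix of $\tilde{F}_{\tilde{\eta}}$ at a given point is strongly convex if and only if $||\mathbf{G}^{T}||_{h} < \tilde{b}_{0}$, where $\tilde{b}_{0}$ is given by \eqref{Strong_C} and depends \emph{only} on the along-traction coefficient $\tilde{\eta}$, not on position. Since $\tilde{b}_{0}$ is position-independent while the wind force $||\mathbf{G}^{T}||_{h}$ varies across $\mathfrak{G}$, requiring strong convexity at every point is equivalent to demanding that the \emph{supremum} of $||\mathbf{G}^{T}||_{h}$ over the whole surface be strictly smaller than $\tilde{b}_{0}$. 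Hence my first step is to locate the global maximum of the wind force as a function of position.

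Next I would exploit the rotational symmetry captured by the chosen parametrization: in coordinates $(\rho,\varphi)$ the force $||\mathbf{G}^{T}||_{h} = \frac{3\bar{g}\rho e^{-\rho^{2}}}{\sqrt{9\rho^{2} e^{-2\rho^{2}}+1}}$ depends only on $\rho$. Setting $t = \rho^{2}$ and writing $||\mathbf{G}^{T}||_{h}^{2} = \frac{9\bar{g}^{2} w}{9w+1}$ with $w = te^{-2t}$, one sees that $||\mathbf{G}^{T}||_{h}^{2}$ is a strictly increasing function of $w$; maximizing the force thus reduces to maximizing the single-variable function $w(t) = te^{-2t}$ on $[0,\infty)$. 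The stationarity condition $w'(t) = e^{-2t}(1-2t) = 0$ yields $t = \frac{1}{2}$, i.e. $\rho = \frac{1}{\sqrt{2}}$, and substituting back gives $\max_{\rho \geq 0} ||\mathbf{G}^{T}||_{h} = \frac{3\bar{g}}{\sqrt{2e+9}}$, exactly the value quoted in the discussion preceding the lemma.

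Finally, I would substitute this maximum into the criterion $\max_{\rho} ||\mathbf{G}^{T}||_{h} < \tilde{b}_{0}$ and solve for $\bar{g}$ in each of the two regimes of \eqref{Strong_C}. For $\tilde{\eta} \in [0,\frac{1}{3}]$ the inequality $\frac{3\bar{g}}{\sqrt{2e+9}} < \frac{1}{1-\tilde{\eta}}$ rearranges to $\bar{g} < \frac{\sqrt{2e+9}}{3(1-\tilde{\eta})}$, while for $\tilde{\eta} \in (\frac{1}{3},1]$ the inequality $\frac{3\bar{g}}{\sqrt{2e+9}} < \frac{1}{2\tilde{\eta}}$ gives $\bar{g} < \frac{\sqrt{2e+9}}{6\tilde{\eta}}$; together these are precisely $\bar{g} < \delta_{2}(\tilde{\eta})$. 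Because the maximal force is genuinely attained at $\rho = \frac{1}{\sqrt{2}}$, the strict inequality is both necessary and sufficient, which closes both directions of the equivalence. I expect no serious obstacle: the only computation needing care is the one-dimensional optimization of $w(t) = te^{-2t}$, and the reduction through its monotone dependence on $w$ makes even that routine.
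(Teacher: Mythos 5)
Your proposal is correct and follows essentially the same route as the paper: one computes the maximum wind force $\max_{\rho\geq 0}||\mathbf{G}^{T}||_{h}=\frac{3\bar{g}}{\sqrt{2e+9}}$, attained at $\rho=\frac{1}{\sqrt{2}}$, and then imposes the position-independent strong convexity criterion $||\mathbf{G}^{T}||_{h}<\tilde{b}_{0}$ of \cref{Theorem1} (Eq. \eqref{Strong_C}) at this worst point, solving for $\bar{g}$ in the two regimes of $\tilde{\eta}$. Your explicit remark that the maximum is genuinely attained (so that strict inequality at the supremum is equivalent to strict inequality everywhere) is a small point the paper leaves implicit, but otherwise the arguments coincide.
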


Next, having in mind the restriction $\bar{g}<\delta _{i}(\tilde{\eta})$, $%
i=1,2,$ we compare the allowable\footnote{%
Because of the restrictions on strong convexity.} values of $\bar{g}$ expressed
as a function of the along-traction coefficient $\tilde{\eta}$ for the
surface $\mathfrak{G}$ and the inclined plane studied in the preceding
subsection. The graphical outcome is presented in \cref{fig_quasi_Gauss_convexity} (left). It is easily seen that the most restrictive case regarding the convexity among all possible scenarios on the slippery cross slope of $\mathfrak{G}$ refers to CROSS ($\tilde{\eta}=1$). However, it is more stringent for the surface  $\mathfrak{G}$ than the inclined plane, since $\sqrt{2e+9}/6\approx 0.63<%
\sqrt{5}/2\approx 1.12$. Moreover, referring to ZNP, the strong convexity
condition on $\mathfrak{G}$ requests $\bar{g}<\sqrt{2e+9}/{3}\approx 1.27$,
while $\bar{g}<\sqrt{5}\approx 2.24$ for the inclined plane; cf. \cite%
{slippery}. Remark that 
the strongest possible (the supremum) 
gravitational wind on the Gaussian bell-shaped hillside, expressed in terms
of the norm $||\mathbf{G}^{T}||_{h}$ \textquotedblleft
blows\textquotedblright\ along the parallel $\rho ={1}/\sqrt{2}\approx 0.71$%
, for any given acceleration $\bar{g}>0$. As expected, the maximum value of $%
\delta _{i}(\tilde{\eta})$, $i=1,2,$ is achived for $\tilde{\eta}=1/3$ on
both types of the slopes. 
 Moreover, we compare the domains of $\bar{g}$ on $\mathfrak{G}$ in the slippery-cross-slope problem being investigated in this paper with the ``common'' slippery slope (including the across-traction coefficient $\eta$) analyzed in \cite{slippery} recently. This is illustrated in \cref{fig_quasi_Gauss_convexity} (right). 
\begin{figure}[h]
\centering
~\includegraphics[width=0.48\textwidth]{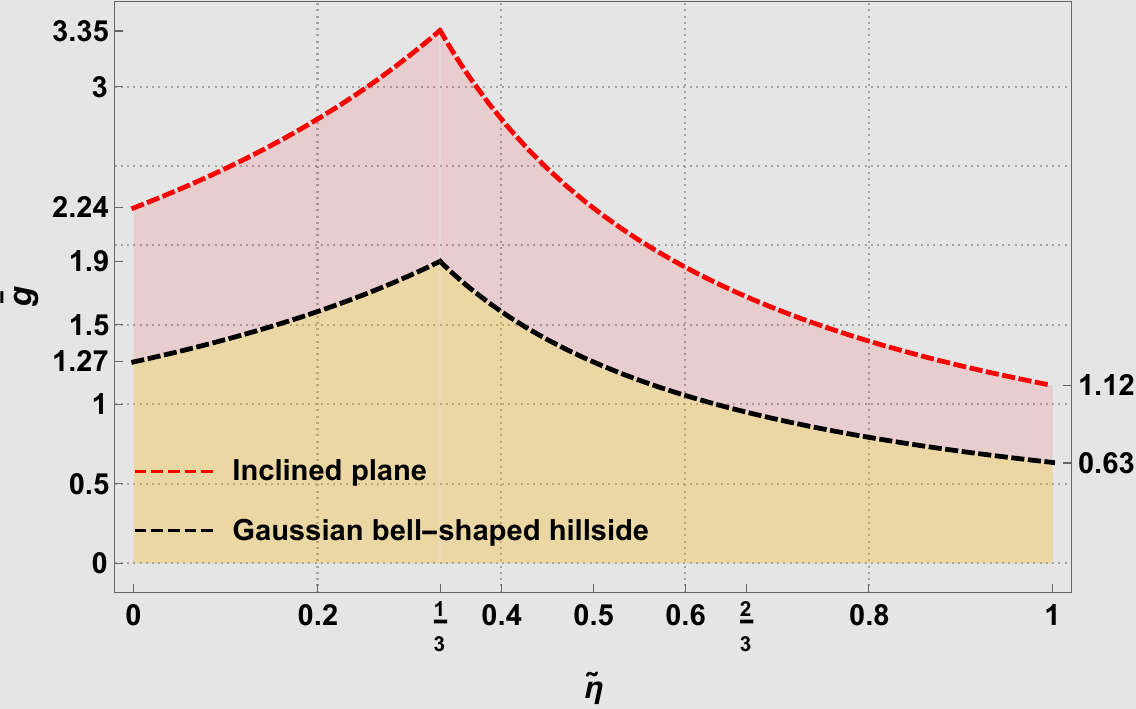} 
~\includegraphics[width=0.48\textwidth]{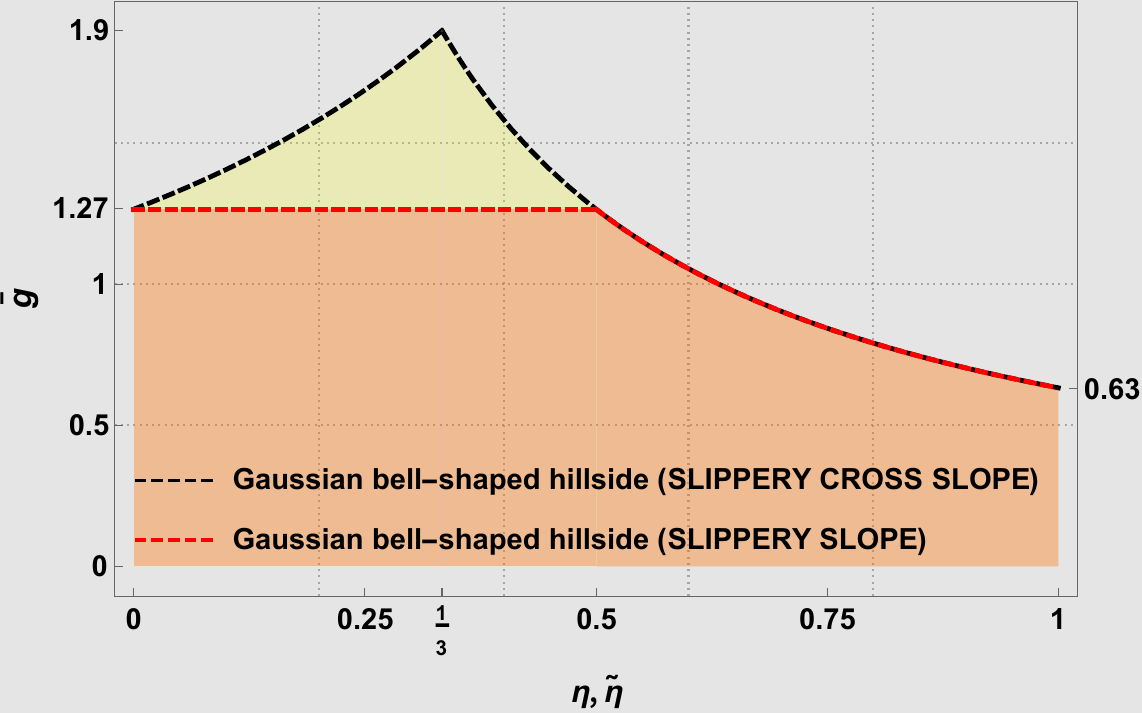}
\caption{Left: the allowable 
 rescaled gravitational acceleration $\bar{g}$ as a function of the along-traction coefficient $\tilde{\eta}$ in the slippery-cross-slope problem, considered on the Gaussian bell-shaped hillside $\mathfrak{G}$ (as per \cref{lemma_gauss}) 
 and the inclined plane $z=x/2$.  
 Right: like on the left, but the allowable rescaled gravitational acceleration $\bar{g}$ refers to the same Gaussian bell-shaped hillside $\mathfrak{G}$ in slippery-cross-slope problem versus the slippery slope problem (with across-traction represented by a parameter $\eta\in[0, 1]$ \cite{slippery}).   
}
\label{fig_quasi_Gauss_convexity}
\end{figure}

Further on, we show the $\tilde{F}_{\tilde{\eta}}$-geodesic equations, which
are related to $\mathfrak{G}$. We thus get \cite{slippery, cross} 
\begin{equation}
\alpha ^{2}=(1+9\rho ^{2}e^{-2\rho ^{2}})\dot{\rho}^{2}+\rho ^{2}\dot{\varphi%
}^{2},\qquad \beta =-3\rho e^{-\rho ^{2}}\dot{\rho},  \label{4.4}
\end{equation}%
\begin{equation}
\mathcal{G}_{\alpha }^{1}=\frac{\rho }{2(9\rho ^{2}e^{-2\rho ^{2}}+1)}\left[
9(1-2\rho ^{2})e^{-2\rho ^{2}}\dot{\rho}^{2}-\dot{\varphi}^{2}\right]
,\qquad \mathcal{G}_{\alpha }^{2}=\frac{1}{\rho }\dot{\rho}\dot{\varphi},
\label{4.5}
\end{equation}%
\begin{equation}
\begin{array}{l}
r_{00}=-\frac{3e^{-\rho ^{2}}}{9\rho ^{2}e^{-2\rho ^{2}}+1}\left[ (1-2\rho
^{2})\dot{\rho}^{2}+\rho ^{2}\dot{\varphi}^{2}\right] ,\qquad r_{0}=\frac{%
9\rho (1-2\rho ^{2})e^{-2\rho ^{2}}}{(9\rho ^{2}e^{-2\rho ^{2}}+1)^{2}}\dot{%
\rho}, \\ 
~ \\ 
r=-\frac{27\rho ^{2}(1-2\rho ^{2})e^{-3\rho ^{2}}}{(9\rho ^{2}e^{-2\rho
^{2}}+1)^{3}},\qquad r^{1}=\frac{9\rho (1-2\rho ^{2})e^{-2\rho ^{2}}}{(9\rho
^{2}e^{-2\rho ^{2}}+1)^{3}},\qquad r^{2}=0.%
\end{array}
\label{4.6}
\end{equation}

Owing to \cref{Theorem2} and \cref{Prop5}, the time geodesics $\gamma
(t)=(\rho (t),\varphi (t))$ on the slippery cross slope of the surface $%
\mathfrak{G}$ are are provided by the solutions of the ODE system 
\begin{equation*}
\left\{ 
\begin{array}{rll}
0 & = & \ddot{\rho}+\frac{\rho }{9\rho ^{2}e^{-2\rho ^{2}}+1}\left[
9(1-2\rho ^{2})e^{-2\rho ^{2}}\dot{\rho}^{2}-\dot{\varphi}^{2}\right]
+2\left\{ \mathit{\tilde{\Theta}}(r_{00}+2\alpha ^{2}\tilde{R}r)+\alpha 
\mathit{\tilde{\Omega}}r_{0}\right\} \frac{\dot{\rho}}{\alpha } \\ 
~ &  &  \\ 
&  & -\frac{6\rho e^{-\rho ^{2}}}{9\rho ^{2}e^{-2\rho ^{2}}+1}\left\{ 
\mathit{\ \tilde{\Psi}}(r_{00}+2\alpha ^{2}\tilde{R}r)+\alpha \mathit{\tilde{%
\Pi}}r_{0}\right\} -\frac{18\rho (1-2\rho ^{2})e^{-2\rho ^{2}}}{(9\rho
^{2}e^{-2\rho ^{2}}+1)^{3}}\alpha ^{2}\tilde{R} \\ 
~ &  &  \\ 
0 & = & \ddot{\varphi}+\frac{2}{\rho }\dot{\rho}\dot{\varphi}+2\left\{ 
\mathit{\tilde{\Theta}}(r_{00}+2\alpha ^{2}\tilde{R}r)+\alpha \mathit{\tilde{%
\Omega}}r_{0}\right\} \frac{\dot{\varphi}}{\alpha }%
\end{array}%
\right. ,
\end{equation*}%
where $\mathit{\tilde{\Theta}},$ $\tilde{R},$ $\mathit{\tilde{\Omega}},$ $%
\mathit{\tilde{\Pi}}$ and $\mathit{\tilde{\Psi}}$ are given by Eq. %
\eqref{geo_tilde}, with $\mathbf{G}^{T}(\rho ,\varphi )=\frac{3\bar{g}\rho
e^{-\rho ^{2}}}{9\rho ^{2}e^{-2\rho ^{2}}+1}\frac{\partial }{\partial \rho },
$  $\bar{g}<\delta _{2}(\tilde{\eta})$, together with Eqs. %
\eqref{4.4} and \eqref{4.6}, $\rho =\rho (t)$, $\varphi =\varphi (t)$. %
\cref{fig_qs_Gauss} shows the slippery-cross-slope geodesics generated for
various along-traction coefficients, i.e. $\tilde{\eta}\in \{0,1/3,2/3,1\}$.
Also, the corresponding unit time fronts are presented in solid colors. 
\begin{figure}[H]
\centering
~\includegraphics[width=0.52\textwidth]{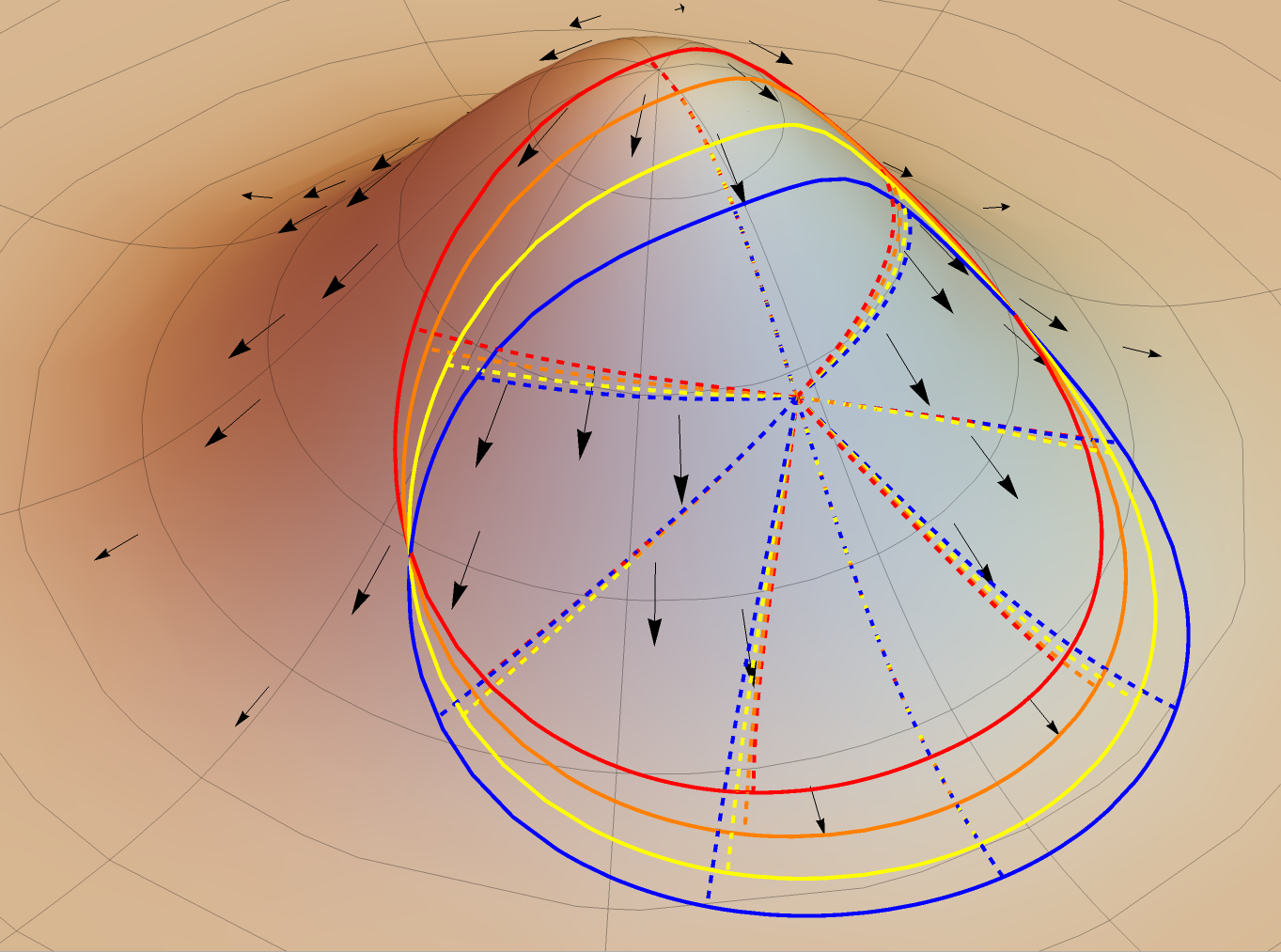}
~\includegraphics[width=0.45\textwidth]{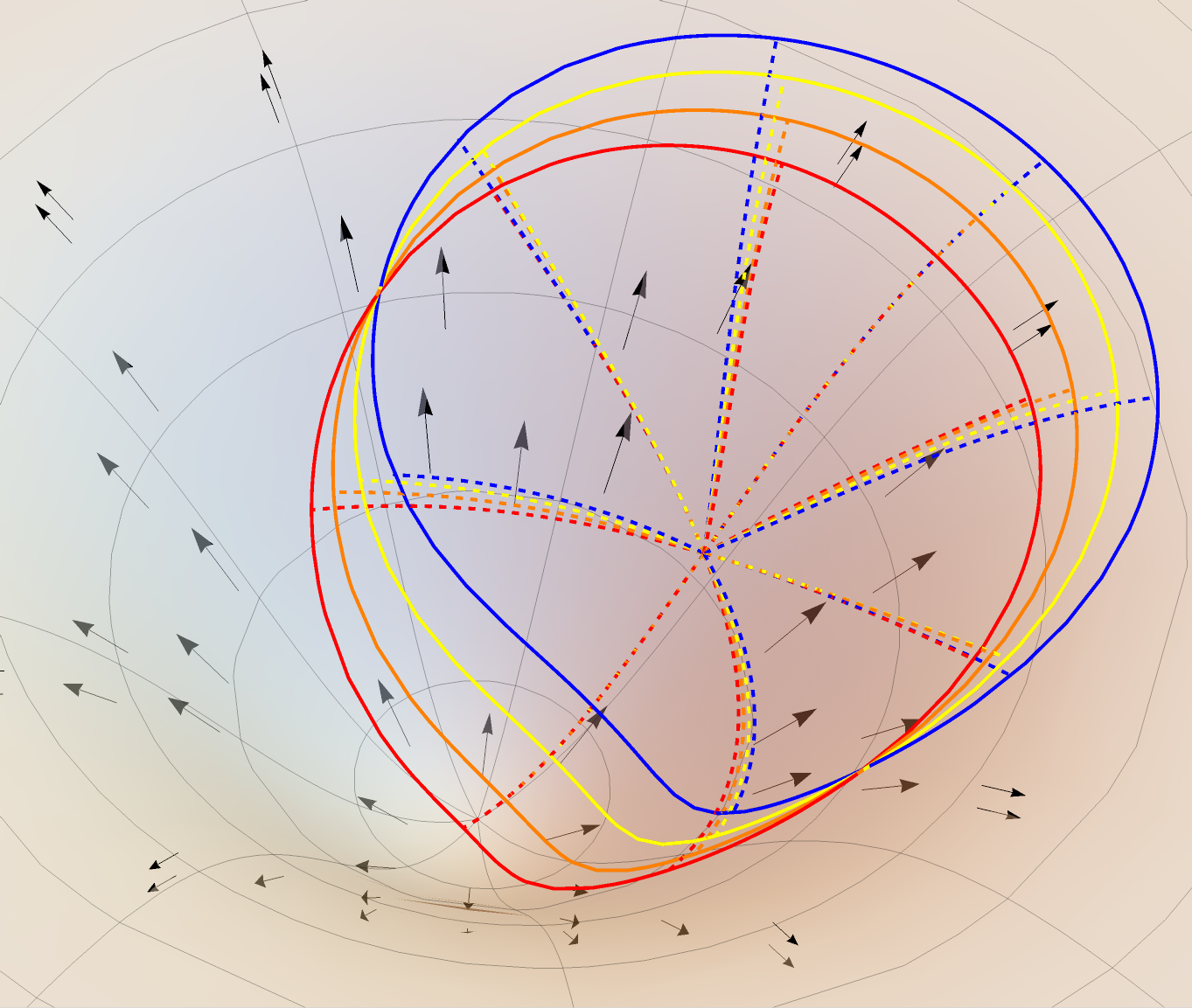}
\caption{Left: the unit time fronts (solid colors) and the related time-minimizing geodesics (dashed colors; drawn with a step of $\Delta\protect\theta=\protect\pi/4$) on the slippery cross slope modeled by the rotational Gaussian bell-shaped surface $\mathfrak{G}$, under the action of gravitational wind, for various along-traction coefficients, i.e. $\tilde{\eta}\in\{1/3 \ \textnormal{(yellow)}, 2/3\  \textnormal{(orange)}, 0 \ \textnormal{(blue, the Zermelo case)}, 1 \ \textnormal{(red, the cross slope case)}\}$; $\bar{g}=0.63$. The gravitational wind $\mathbf{G}^{T}$  (marked by black arrows) ``blows'' in the steepest downhill direction. The initial point is located on the parallel of the strongest gravitational wind, i.e.  $(\protect\rho(0)=1/\protect\sqrt{2}, \protect\varphi(0)= -\protect\pi/4)$. 
Right: as on the left, in inside and reversed view of the surface $\mathfrak{G}$.}
\label{fig_qs_Gauss}
\end{figure}

\noindent Moreover, the new solutions are compared to the Riemannian (white)
and classical Matsumoto (green) geodesics as well as their fronts, under the action of the gravitational wind $\mathbf{G}^{T}$, where $\bar{g}=0.63$ (see \cref{fig_qs_Gauss_2}). 

Finallly, remark that one can apply the current results for the analogous models of
a sinkhole or a valley, where the gravity effect is reverse, namely, with
the uphill action along the surface gradient (see the right-hand side images in \cref{fig_qs_Gauss} and \cref{fig_qs_Gauss_2}).
\begin{figure}[H]
\centering
~\includegraphics[width=0.48\textwidth]{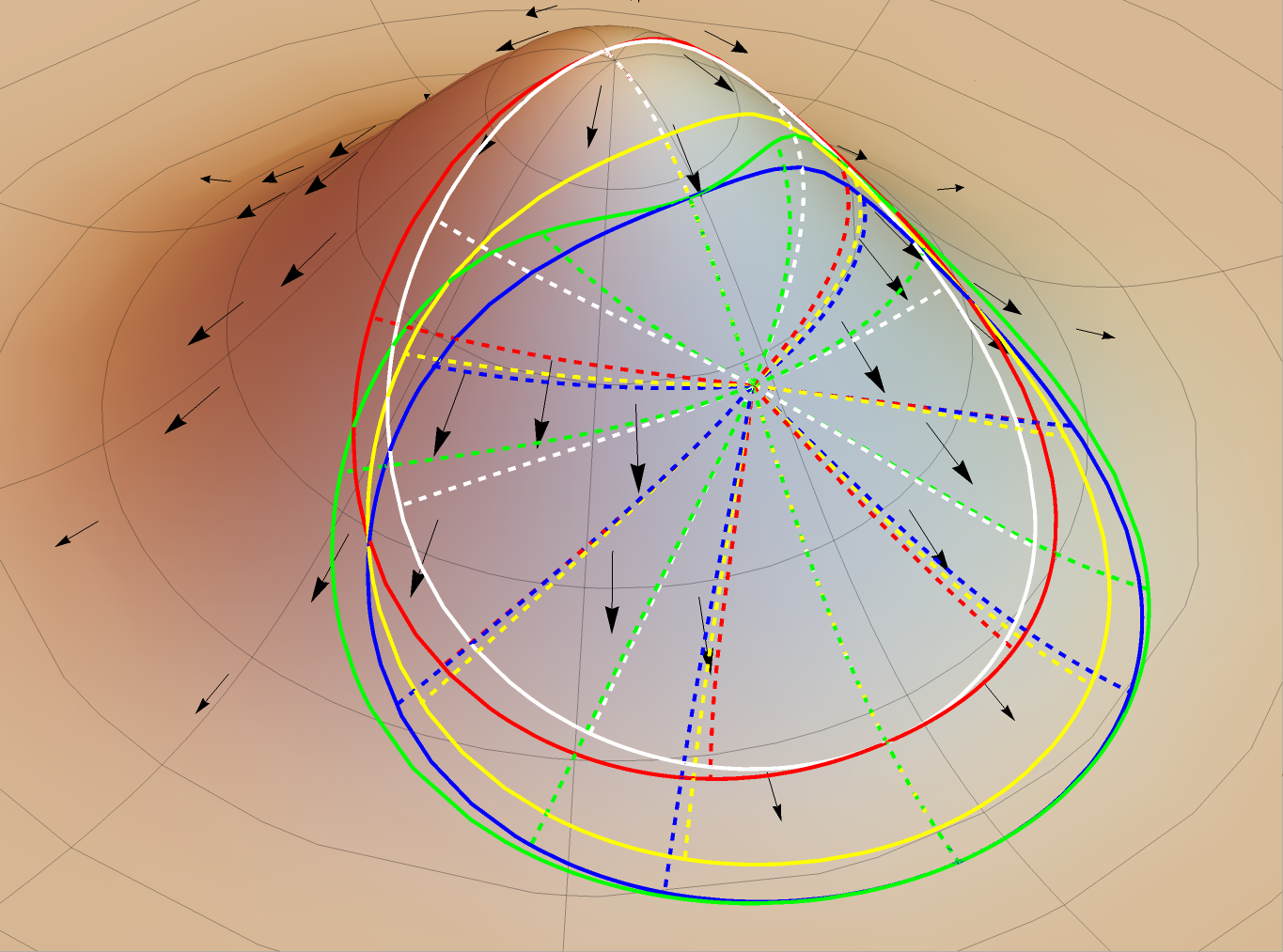}
~\includegraphics[width=0.48\textwidth]{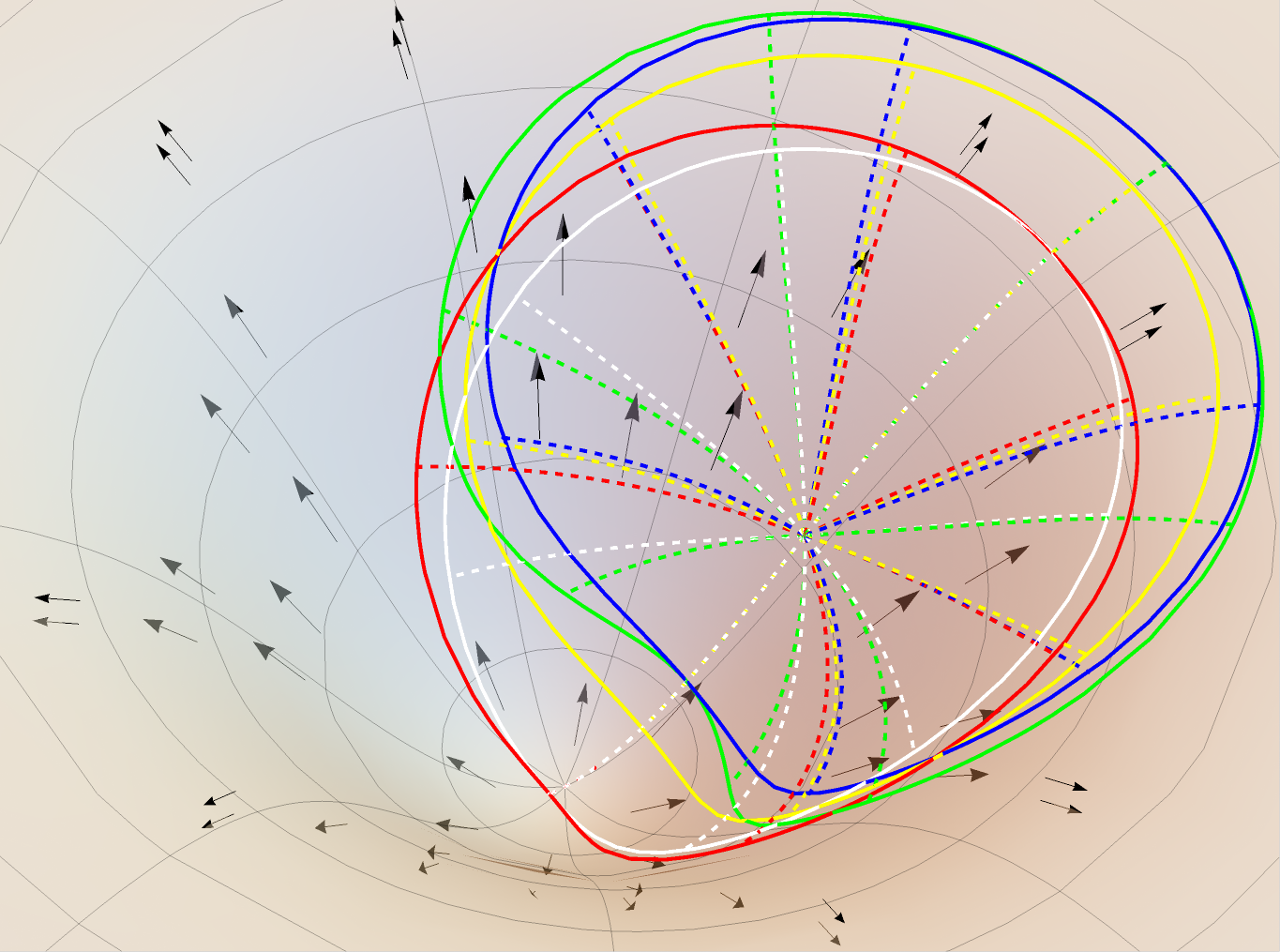}
\caption{Left: the unit time fronts (solid colors) and the related time-minimizing geodesics (dashed colors) on the slippery cross slope as in \cref{fig_qs_Gauss}, where $\tilde{\eta}\in\{1/3 \ \textnormal{(yellow)}, 0 \ \textnormal{(blue, the Zermelo case)}, 1 \ \textnormal{(red, the cross slope case)}$, compared in addition to the Matsumoto (green) and Riemannian (white) cases; $\bar{g}=0.63$. In particular, it can be observed how the initial (unperturbed) Riemannian geodesics and time front are deformed under the action of gravitational wind (marked by black arrows), depending on the along-traction coefficient $\tilde{\eta}$. 
Right: as on the left, in inside and reversed view of the surface $\mathfrak{G}$.}
\label{fig_qs_Gauss_2}
\end{figure}

\smallskip \noindent \textbf{Acknowledgements.} 
The authors are greatly indebted to Reviewers for their time and effort including the constructive and detailed comments and suggestions that helped to improve the initial version of this paper. 
During the final stage of the work the second author was partially supported by the Gdynia Maritime University project reference WN/PI/2024/01.

\bibliographystyle{plain}

\begin{thebibliography}{99}
\bibitem{cross} N. Aldea, P. Kopacz, The slope-of-a-mountain problem in a cross gravitational wind. Nonlinear Anal.-Theor. 235 (2023) 113329. 

\bibitem{Nicprw} N. Aldea, P. Kopacz, R. Wolak, Randers metrics based on
deformations by gradient wind. Period. Math. Hung. 86(3) (2023) 266-280. 

\bibitem{slippery} N. Aldea, P. Kopacz, Time geodesics on a slippery slope under gravitational wind. Nonlinear Anal.-Theor. 227 (2023) 113160. 

\bibitem{baoroblesricci} D. Bao, C. Robles, Ricci and flag curvatures in
Finsler geometry, in: \emph{A sampler of Riemann-Finsler geometry} (eds. D.
Bao et al.), Math. Sci. Res. Inst. Publ. 50, Cambridge Univ. Press, 2004, pp. 197-259. 

\bibitem{BRS} D. Bao, C. Robles, Z. Shen, Zermelo navigation on
Riemannian manifolds. J. Diff. Geom. 66(3) (2004) 377-435. 


\bibitem{brody3} D.C. Brody, G.W. Gibbons, D.M. Meier, A Riemannian approach to Randers geodesics. J. Geom. Phys. 106 (2016) 98-101. 

\bibitem{B-Miron} I. Buc\u{a}taru, R. Miron, Finsler-Lagrange geometry.
Applications to dynamical systems. Editura Academiei Rom\^{a}ne, Bucuresti, 2007. 


\bibitem{CJS} E. Caponio, M.\'{A}. Javaloyes, M. S\'{a}nchez, Wind
Finslerian structures: from Zermelo's navigation to the causality of
spacetimes. arXiv e-print, (2014) 1407.5494.


\bibitem{Sabau_Tai} P. Chansri, P. Chansangiam, S.V. Sab\u{a}u, The
geometry on the slope of a mountain. Miskolc Math. Notes {21}(2) (2020) 747-762. 

\bibitem{Sabau_pedal} P. Chansri, P. Chansangiam, S.V. Sab\u{a}u, Finslerian indicatrices as
algebraic curves and surfaces. Balk. J. Geom. Appl. {25}(1) (2020) 19-33. 

\bibitem{chern_shen} S-.S. Chern, Z. Shen, Riemann-Finsler geometry. Nankai
Tracts in Mathematics. World Scientific, River Edge (N.J.), London,
Singapore, 2005.


\bibitem{Musznay} B. Hubicska, Z. Muzsnay, Holonomy in the quantum navigation
problem. Quantum Inf. Process. 18(10) (2019) 1-10. 

\bibitem{JS} M.\'{A}. Javaloyes, M. S\'{a}nchez, On the definition and examples
of Finsler metrics. Ann. Sc. Norm. Super. Pisa Cl. Sci. (5) 13(3) (2014) 813-858. 

\bibitem{JPS} M.\'{A}. Javaloyes, E. Pend\'{a}s-Recondo, M. S\'{a}nchez, A
general model for wildfire propagation with wind and slope, SIAM J. Appl. Algebra Geom. 7 (2) (2023)  414-439.


\bibitem{Kristaly} S. Kaj\'{a}nt\'{o}, A. Krist\'{a}ly, Unexpected behaviour
of flag and S-curvatures on the interpolated Poincar\'{e} metric, J. Geom.
Anal. 31 (2021) 10246-10262. 

\bibitem{kopi6} P. Kopacz, On generalization of Zermelo navigation problem on Riemannian
  manifolds, Int. J. Geom. Methods Mod. Phys. 16(4) (2019) 19500580. 


\bibitem{markvorsen} S. Markvorsen, A Finsler geodesic spray paradigm for
wildfire spread modelling. Nonlinear Anal. RWA 28 (2016) 208-228. 

\bibitem{matsumoto} M. Matsumoto, A slope of a mountain is a Finsler surface
with respect to a time measure, J. Math. Kyoto Univ. 29 (1989) 17-25. 


\bibitem{cr} C. Robles, Geodesics in Randers spaces of constant curvature. T.
Am. Math. Soc. 359(4) (2007) 1633-1651. 

\bibitem{SH} Z. Shen, Finsler Metrics with $\mathbf{K}=0$ and $\mathbf{S}=0$,
Can. J. Math. 55(1) (2003) 112-132. 

\bibitem{S-Sabau} H. Shimada, S.V. Sab\u{a}u, Introduction to Matsumoto
metric. Nonlinear Anal.-Theor. 63 (2005) 165-168. 

\bibitem{Yajima} T. Yajima, Y. Tazawa, Classification of time-optimal paths under an external force based on Jacobi stability in Finsler space, J. Optim. Theory Appl.  200 (2024) 1216-1238. 

\bibitem{Y-Sabau} R. Yoshikawa, S.V. Sab\u{a}u, Kropina metrics and Zermelo
navigation on Riemannian manifolds, Geom. Dedicata 171(1) (2013) 119-148. 

\bibitem{Yu} C. Yu, H. Zhu, On a new class of Finsler metrics, Diff. Geom.
Appl. 29(2) (2011) 244-254. 

\bibitem{Zer0} E. Zermelo, \"{U}ber die Navigation in der Luft als Problem der Variationsrechnung. Jahresber. Deutsch. Math.-Verein. 89 (1930) 44--48. 

\bibitem{Zer} E. Zermelo, \"{U}ber das Navigationsproblem bei ruhender oder
ver\"{a}nderlicher Windverteilung. ZAMM-Z. Angew. Math. Me. 11(2) (1931) 114--124. 

\bibitem{YBShen} X. Zhang, Y. Shen, On Einstein Matsumoto metrics. Publ.
Math. 85(1-2) (2014) 15-30. 

\end{thebibliography}

\end{document}